\newcommand{\NN}{\mathbb{N}}
\newcommand{\ZZ}{\mathbb{Z}}
\newcommand{\SM}{\mathbf{Sm}_k}
\newcommand{\SCH}{\mathbf{Sch}_k}
\newcommand{\srarrow}{\twoheadrightarrow}
\newcommand{\irarrow}{\hookrightarrow}
\newcommand{\flag}{\mathcal{F}\ell\,}
\newcommand{\Proj}{\mathbb{P}}
\newcommand{\Laz}{\mathbb{L}}
\newcommand{\trecd}{\cdot\cdot\cdot}
\newcommand{\tred}{\ldots}
\newcommand{\unddot}{_\textbf{\textbullet}}
\newcommand{\variables}{[x_1,\ldots,x_n,y_1,\ldots,y_n]}
\newcommand{\spec}{{\rm Spec\,}}
\newtheorem{theorem}{Theorem}[section]
\newtheorem{lemma}[theorem]{Lemma}
\newtheorem{proposition}[theorem]{Proposition}
\newtheorem{corollary}[theorem]{Corollary}
\newtheorem{definition}[theorem]{Definition}
\theoremstyle{definition} \newtheorem{remark}[theorem]{Remark}
\theoremstyle{definition} 
\theoremstyle{definition} 
\theoremstyle{definition} 
\date{}
\title{\textbf{A Thom-Porteous formula for connective $K$-theory using algebraic cobordism}}
\author{THOMAS  HUDSON}
\begin{document}

%
%
\maketitle
\begin{abstract}
We prove a formula for the push-forward class of Bott-Samelson resolutions in the algebraic cobordism ring of the flag bundle. We specialise our formula to connective K-theory providing a geometric interpretation to the double $\beta$-polynomials of Fomin and Kirillov by computing the fundamental classes of Schubert varieties. As a corollary we obtain a Thom-Porteous formula generalising those of the Chow ring and of the Grothendieck ring of vector bundles.

\vspace*{1\baselineskip}
\noindent\textit{Key Words:} Algebraic cobordism, Schubert varieties, Flag bundles.
 
\noindent\textit{Mathematics Subject Classification 2000:} Primary: 14C25, 14M15; Secondary: 14F05, 19E15.
\end{abstract}


\tableofcontents

\section{Introduction} 
The main purpose of this paper is to extend the known Thom-Porteous formulas to the more general context of oriented cohomology theories. Given a map of vector bundles $h:E\rightarrow F$ over a Cohen-Macaulay scheme $X$, the Thom-Porteous formula expresses the fundamental classes of the degeneracy loci
$$D_r(h)=\{x\in X\ |\ {\rm rank}(h(x):E(x)\rightarrow F(x))\leq r\}\ , \ 0\leq r\leq\text{min}(\text{rank} E, \text{rank} F)$$
as polynomials in the Chern classes of the two bundles, provided $\text{codim}(D_r(h),X)$ is the expected one. The formula owes its name to R. Thom, who in the context of topology conjectured the existence of a universal family of polynomials describing the cohomology classes of these loci, and I. R. Porteous who in \cite{SimplePorteous} identified the correct family, hence proving the formula. For what concerns the algebro-geometric setting, the first proof of the statement is due to Kempf-Laksov \cite{DeterminantalKempf}. In \cite{FlagsFulton} Fulton considered a more general family of degeneracy loci, constructed out of morphisms of bundles with full flags, and proved that the Chow ring fundamental classes of these loci are described by the double Schubert polynomials of Lascoux-Sch\"{u}tzenberger. The key idea in Fulton's approach is to reduce the problem to the universal case represented by $\Omega_\omega$, the Schubert varieties of the full flag bundle $\flag V$, and to recursively compute their classes by means of certain operators on $CH^*(\flag V)$. Even though they are not explicitly mentioned in Fulton's proof, this procedure can be reinterpreted in geometric terms by bringing into the picture a family of desingularizations $R_I\stackrel{r_I}\rightarrow \Omega_\omega$ known as Bott-Samelson resolutions: double Schubert polynomials naturally describe the push-forwards $r_{I*}[R_I]_{CH}$ and, since $r_I$ is a birational morphism, these coincide with the desidered classes $[\Omega_\omega]_{CH}$. 

The essentially functorial nature of the proof allowed Fulton and Lascoux, without any major modification, to obtain an analogue of this result for the Grothendieck ring of vector bundles: in \cite{PieriFulton} they proved that as elements of $K^0(\flag V)$, the structure sheaves of Schubert varieties can be described by means of the double Grothendieck polynomials. The circle was finally closed by Buch who in \cite{GrothendieckBuch} established the Thom-Porteous formula for $K^0$.
 
Given this state of things, it seems quite natural to ask if such a formula also holds for other functors which satisfy the same properties enjoyed by $CH^*$ and $K^0$ or, if not, up to which extent Fulton's proof can be generalized and what aspects are the source of problems. The family of functors that we intend to consider is that of oriented cohomology theories and in particular, among them, algebraic cobordism. In \cite{AlgebraicLevine} Levine and Morel introduced the notion of oriented cohomology theory on the category of smooth schemes over a field $k$ by extending Quillen's original definition for differentiable manifolds. In this framework they constructed $\Omega^*$, an oriented cohomology theory which, again in analogy with Quillen's results for complex cobordism, they prove to be universal if the field has characteristic 0: for any other theory $A^*$ there exists a unique morphism of oriented cohomology theories $\vartheta_A:\Omega^*\rightarrow A^*$. This in particular says that formulas that are obtained for $\Omega^*$ specialise to all other theories: a Thom-Porteous formula for algebraic cobordism would have to generalise the ones that are already known.

In order to understand in which ways a general oriented theory $A^*$ differs from the Chow ring, it can be helpful to consider the behaviour of the first Chern class with respect to tensor product: while $c^{CH}_1$ behaves linearly, this is not necessarily true for $c_1^A$. Instead, there exists a power series $F_A\in A^*( k)[[u,v]]$ such that for any two line bundles $L$ and $M$ over a smooth scheme $X$ one has 
$$c_1^A(L\otimes M)=F_A(c_1^A(L),c_1^A(M))\ .$$
Actually the pair $(A^*(k),F_A)$ constitutes a commutative formal group law of rank 1 and in particular this implies the existence of another power series $\chi_A\in A^*(k)[u]$, known as the formal inverse, which expresses the relation between the first Chern class of a bundle and that of its dual. As we will see these two power series play a major role in expressing the cobordism analogue of double Schubert polynomials.

 We now proceed to describe in detail our results. Our first goal  consists in the computation of the classes $\mathcal{R}_I:=r_{I*}[R_I]_\Omega$ as elements of $\Omega^*(\flag V)$. It is worth noticing that for the flag manifold (i.e. when $X=\spec k$) this computation has been indipendently performed by Calm\'es-Petrov-Zainoulline in \cite{SchubertCalmes} and by Hornbostel-Kiritchenko in \cite{SchubertHornbostel} and to some extent one can view our work as an extension of the latter. In fact we explicitly compute the fundamental class of the smallest Schubert variety $\Omega_{\omega_0}$ and, by making use of the operators
$$ \overline{A_i}:\Omega^*(\flag V)\rightarrow\Omega^*(\flag V) \quad, \quad
\overline{A_i}(f):=(1+\sigma_i)\frac{f}{F_\Omega(x_i,\chi_\Omega (x_{i+1}))}$$ 
considered by Hornbostel and Kiritchenko, we obtain the following 

\begin{theorem}
Let $V\rightarrow X$ be a vector bundle of rank $n$  with a fixed full flag of subbundles $V_1\subset \trecd \subset V_n=V$. Denote by $\pi^*V=Q_n\srarrow \trecd \srarrow Q_1$ the universal full flag of quotient bundles on $\flag V\stackrel{\pi}\rightarrow X$. Let $I=(i_1,\tred,i_l)$ be an $l$-tuple with $i_j\in\{1,\tred, n-1\}$ and let $R_I\stackrel{r_I}\rightarrow \flag V$ be the corresponding Bott-Samelson resolution. Then in $\Omega^*(\flag V)$ one has
\begin{align}
\mathcal{R}_I=A_{i_l}\trecd A_{i_1}[\Omega_{\omega_0}]_\Omega\ \text{ with }\  [\Omega_{\omega_0}]_\Omega=\prod_{i+j\leq n} F_\Omega(x_i,\chi_\Omega(y_j))\ , \label{eq main theorem}
\end{align}
where we set $x_i:=c_1(\textit{Ker}(Q_i\srarrow Q_{i-1}))$ and $y_i:=c_1(V_i/V_{i-1})$. 
\end{theorem}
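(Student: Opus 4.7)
The proof plan is to decompose the theorem into two parts: (i) a direct computation of $[\Omega_{\omega_0}]_\Omega$, and (ii) the operator recursion $\mathcal{R}_I = A_{i_l}\trecd A_{i_1}[\Omega_{\omega_0}]_\Omega$, to be established by induction on $l=|I|$.

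For (i), I would identify $\Omega_{\omega_0}$ with the section of $\pi:\flag V \to X$ where the universal quotient flag is maximally incident to $V_\bullet$. Concretely, this section is cut out scheme-theoretically by the vanishing of the compositions $\pi^*V_j \irarrow \pi^*V \srarrow Q_i$ for all pairs $(i,j)$ with $i+j \leq n$. A standard filtration/splitting-principle argument then shows that $\Omega_{\omega_0}$ is the transverse zero locus of a regular section of the rank-$\binom{n}{2}$ bundle $\bigoplus_{i+j\leq n} \text{Ker}(Q_i \srarrow Q_{i-1}) \otimes \pi^*(V_j/V_{j-1})^\vee$, a direct sum of line bundles on $\flag V$. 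In algebraic cobordism the fundamental class of such a zero locus is the top Chern class of the bundle, and by the formal group law identity $c_1^\Omega(L \otimes M^\vee) = F_\Omega(c_1(L), \chi_\Omega(c_1(M)))$, this top Chern class is precisely the product formula asserted in the statement.

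For (ii), I would exploit the recursive construction of the Bott-Samelson: $R_\emptyset = \Omega_{\omega_0}$, while $R_{(i_1,\trecd,i_l)}$ is obtained from $R_{(i_1,\trecd,i_{l-1})}$ by a fibre-product step producing a $\Proj^1$-bundle structure over the partial flag bundle that forgets the $i_l$-th step. Push-forward of $[R_I]_\Omega$ to $\flag V$ then factorises as an alternation of push-forwards and pull-backs along these $\Proj^1$-bundles. By the Levine-Morel projective bundle formula, together with a direct computation of the relative Euler class $F_\Omega(x_i,\chi_\Omega(x_{i+1}))$, this composition coincides with the operator $A_{i_l}$, which is essentially the $\overline{A_i}$ of the excerpt restricted to classes on which the division by the Euler class produces a polynomial output. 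An induction on $l$ then delivers the theorem.

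The main technical obstacle lies in (ii), specifically the rigorous interpretation of $A_i$: unlike the divided difference in Chow theory or Demazure's operator in $K$-theory, both of which make sense on the entire cohomology ring, the formal division by $F_\Omega(x_i,\chi_\Omega(x_{i+1}))$ in $\overline{A_i}$ is not \emph{a priori} meaningful in $\Omega^*(\flag V)$. One must verify that the classes arising from Bott-Samelson push-forwards always lie in the subset on which $\overline{A_i}$ yields polynomial output, and carefully track the Euler class contributions using the projection formula at each inductive step. This is the essential new ingredient compared to the Chow and $K$-theoretic arguments of Fulton-Lascoux and Buch, and is precisely what makes the cobordism version of the Thom-Porteous formula a genuine extension rather than a cosmetic one.
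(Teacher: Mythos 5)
Your decomposition into (i) the computation of $[\Omega_{\omega_0}]_\Omega$ and (ii) the operator recursion matches the paper's structure, and your part (ii) is essentially the paper's proof: the recursive fibre-product construction of $R_I$, the factorisation of $r_{I*}$ through $\Proj^1$-bundle push-pulls, and identification of $\varphi_{j}^*\varphi_{j*}$ with $\overline{A_j}$ via Vishik's projective bundle push-forward formula (this identification is cited from Bressler-Evens / Hornbostel-Kiritchenko).

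There is, however, a genuine gap in your part (i). You assert that $\Omega_{\omega_0}$ is the transverse zero locus of a regular section of the direct sum of line bundles $\bigoplus_{i+j\leq n}\mathrm{Ker}(Q_i\srarrow Q_{i-1})\otimes \pi^*(V_j/V_{j-1})^\vee$. But the compositions $\pi^*V_j\irarrow\pi^*V\srarrow Q_i$ are not sections of these line bundles: the map $\pi^*V_j\to Q_i$ neither kills $\pi^*V_{j-1}$ nor lands in $\mathrm{Ker}(Q_i\srarrow Q_{i-1})$ generically, so there is no natural section of your bundle, let alone one whose zero scheme is $\Omega_{\omega_0}$. The paper instead constructs the bundle
\[
K:=\mathrm{Ker}\Bigl(\bigoplus_{l=1}^{n-1}\mathrm{Hom}(\pi^*V_l,Q_{n-l})\stackrel{\psi}{\longrightarrow}\bigoplus_{l=1}^{n-2}\mathrm{Hom}(\pi^*V_l,Q_{n-l-1})\Bigr),
\]
with $\psi(\{g_l\})=\{g_{l+1}\circ i_l - p_{n-l}\circ g_l\}$; the compatibility built into $\psi$ guarantees that the family $\{h_{l,n-l}\}$ of compositions $\pi^*V_l\irarrow\pi^*V\srarrow Q_{n-l}$ does give a section of $K$, and that this section vanishes exactly on $\Omega_{\omega_0}$. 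The product formula for the top Chern class then comes from Whitney applied to the two surjections defining $K$, with a careful matching of Chern roots (Lemma \ref{cor Chern}) and cancellation between numerator and denominator; the ``direct sum of line bundles'' you wrote down is only the associated graded of a filtration of $K$, which computes $c_N(K)$ but carries no natural section. This kernel construction, lifted verbatim from Fulton's Chow-theoretic argument, is the missing content of your sketch.

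Finally, your closing paragraph overstates the technical difficulty surrounding $\overline{A_i}$. Because $\overline{A_i}$ is shown to equal the geometric operator $\varphi_i^*\varphi_{i*}$ on all of $\Omega^*(\flag V)$, one never needs to restrict attention to a subset of classes on which ``division yields polynomial output''; well-definedness is automatic from the geometric identification and the usual divisibility of $f\cdot u - \sigma_i f$ by $x_i-x_{i+1}$ in the relevant quotient ring. This is a citation to the existing literature, not a new ingredient. The genuinely new content in the cobordism setting appears later (failure of braid relations, lack of fundamental classes for singular Schubert varieties), not in the well-definedness of $\overline{A_i}$.
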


Once this result has been established, it would be desirable to bring the fundamental classes $[\Omega_\omega]_\Omega$ into the picture, but one faces two obstructions: first of all in algebraic cobordism not all Schubert varieties come equipped with a fundamental class and moreover, as already pointed out by Hornbostel and Kiritchenko in the case of the flag manifold, the classes $\mathcal{R}_I$ associated to the same Schubert variety do not necessarily coincide. The second of these issues is essentially due to the fact that the operators $\overline{A_i}$, unlike their counterparts for $CH^*$ and $K^0$, do not satisfy the braid relations. One possible way out was suggested to us by a result of Bressler and Evens, who in \cite{BraidBressler} showed that a family of operators in the shape of $\overline{A_i}$ satisfy the braid relations if and only if the formal group law $F$ appearing in the denominator can be written as $F(u,v)=u+v-buv$, i.e. if it is multiplicative. For this reason we chose to restrict our attention to connective $K$-theory (denoted $CK^*$), an oriented cohomology theory that is universal among those with a multiplicative formal group law. This choice also allows us to disregard the first of the obstructions we mentioned:  in \cite{ConnectiveDai} Dai and Levine showed that $CK^*$ has a suitable notion of fundamental class for all  equi-dimensional schemes. 

As it was pointed out to us by Buch, the polynomials that are obtained by specialising the right hand side of (\ref{eq main theorem}) to connective $K$-theory are the double $\beta$-polynomials $\mathfrak{H}^{(\beta)}_\omega$. This family of polynomials, introduced by Fomin and Kirillov in \cite{GrothendieckFomin}, can be thought of as a unification of double Schubert and double Grothendieck polynomials and their definition was inspired by combinatorial considerations. Our approach, on the other hand, allows us to provide a geometric interpretation to these polynomials by relating them to the fundamental classes of Schubert varieties. 

\begin{theorem}
Under the hypothesis  of the preceding theorem one has
$$[\Omega_\omega]_{CK^*}=\mathfrak{H}^{(\beta)}_\omega(\textbf{x}_i,\textbf{y}_j)\ ,\ \omega\in S_n\ ,$$
as elements of $CK^*(\flag(V))$.
\end{theorem}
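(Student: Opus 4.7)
The plan is to push equation (\ref{eq main theorem}) of the preceding theorem along the canonical morphism $\vartheta_{CK}:\Omega^*\rightarrow CK^*$ provided by universality, and then identify the resulting expression with the double $\beta$-polynomial of Fomin-Kirillov. Under $\vartheta_{CK}$, the formal group law becomes multiplicative, $F_{CK}(u,v)=u+v-\beta uv$, with formal inverse $\chi_{CK}(u)=-u/(1-\beta u)$, and the Hornbostel-Kiritchenko operators $\overline{A_i}$ specialise to operators on $CK^*(\flag V)$ of the same shape with $F_\Omega,\chi_\Omega$ replaced by $F_{CK},\chi_{CK}$. By the result of Bressler-Evens recalled in the introduction, for a multiplicative formal group law these operators satisfy the braid relations, so for any $\omega\in S_n$ the composition $A_\omega:=A_{i_l}\trecd A_{i_1}$ is well defined, independent of the chosen reduced expression $\omega=s_{i_1}\trecd s_{i_l}$.

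First I would check that whenever $I$ encodes a reduced word for $\omega$ one has $r_{I*}[R_I]_{CK}=[\Omega_\omega]_{CK}$. The Bott-Samelson morphism $r_I$ is proper and, under the reduced-word hypothesis, birational onto $\Omega_\omega$; by the Dai-Levine construction cited above, $CK^*$ admits fundamental classes for all equi-dimensional schemes and these are compatible with proper push-forward, so the birationality of $r_I$ forces $r_{I*}[R_I]_{CK}=[\Omega_\omega]_{CK}$. Combined with the image of Theorem~1.1 in $CK^*$, this yields
\begin{equation*}
[\Omega_\omega]_{CK}=A_\omega\Bigl(\prod_{i+j\leq n}F_{CK}(x_i,\chi_{CK}(y_j))\Bigr).
\end{equation*}

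It remains to identify the right-hand side with $\mathfrak{H}^{(\beta)}_\omega(\mathbf{x}_i,\mathbf{y}_j)$. I would first verify the base case: a direct computation shows that $\prod_{i+j\leq n}F_{CK}(x_i,\chi_{CK}(y_j))$ coincides, after matching conventions, with the Fomin-Kirillov polynomial $\mathfrak{H}^{(\beta)}_{\omega_0}$ attached to the longest element, which is the initial datum of their recursive construction. Then I would match the specialised operator $A_i$ with the isobaric divided difference operator appearing in the recursion $\mathfrak{H}^{(\beta)}_{s_i\omega}=A_i\,\mathfrak{H}^{(\beta)}_\omega$ (when $\ell(s_i\omega)<\ell(\omega)$) of Fomin-Kirillov, and induct on $\ell(\omega_0)-\ell(\omega)$ using a reduced expression for $\omega\omega_0$; the well-definedness of $A_\omega$ established via Bressler-Evens guarantees that the induction does not depend on the chosen expression.

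The main obstacle I anticipate is not geometric but combinatorial bookkeeping: carefully aligning the normalisations between (i) the Hornbostel-Kiritchenko operator $\overline{A_i}$ specialised at $F_{CK}$, (ii) the isobaric divided difference operator used by Fomin-Kirillov to define $\mathfrak{H}^{(\beta)}_\omega$, and (iii) the convention for the "top" Schubert variety $\Omega_{\omega_0}$ together with the sign of $\beta$ and the choice of $x_i,y_j$. Once these conventions are pinned down, the braid-invariance provided by Bressler-Evens and the standard birationality of Bott-Samelson resolutions on reduced words do the rest.
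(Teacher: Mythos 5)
Your overall strategy---specialise the cobordism formula to $CK^*$, use the Bott--Samelson resolutions to relate $\mathcal{R}_I$ to $[\Omega_\omega]_{CK}$, and then identify the polynomial with $\mathfrak{H}^{(\beta)}_\omega$ via the multiplicative specialisation of $A_i$ and Bressler--Evens---is essentially the route the paper takes. However, there is one genuine gap in the geometric step.

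You claim that since $r_I$ is proper and birational onto $\Omega_\omega$ (for $I$ a reduced word), and since $CK^*$ has fundamental classes ``compatible with proper push-forward,'' it follows that $r_{I*}[R_I]_{CK}=[\Omega_\omega]_{CK}$. This is not correct as stated. Birationality of a proper morphism suffices to conclude $f_*[Y]=[X]$ in $CH_*$, because the Chow fundamental class only sees behaviour at the generic point. But the $CK$-fundamental class is built, via the Dai--Levine isomorphism $\psi_X:CK_d(X)\xrightarrow{\sim} G_0[\beta,\beta^{-1}]_d(X)$, from the class $[\mathcal{O}_X]$ in $G$-theory, and in $G_0$ the push-forward of a fundamental class along a resolution of singularities $f:Y\rightarrow X$ is $\sum_i(-1)^i[R^if_*\mathcal{O}_Y]$, which is \emph{not} automatically $[\mathcal{O}_X]$. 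Moreover, what Dai--Levine actually prove is compatibility of $CK$-fundamental classes with l.c.i.\ \emph{pull-backs}, not a general push-forward compatibility along generically finite maps. What one actually needs is that $\Omega_\omega$ is normal with at worst rational singularities (so that $r_{I*}\mathcal{O}_{R_I}=\mathcal{O}_{\Omega_\omega}$ and $R^qr_{I*}\mathcal{O}_{R_I}=0$ for $q>0$), which is supplied by Proposition \ref{prop resolution}(2), and then the argument of Lemma \ref{lem class}, which transports the question along $\psi_X,\psi_Y$ to $G_0[\beta,\beta^{-1}]$ and uses rationality of singularities there. Without invoking this extra input the conclusion $r_{I*}[R_I]_{CK}=[\Omega_\omega]_{CK}$ does not follow. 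Once you replace ``birationality forces'' by ``rational singularities of Schubert varieties together with Lemma \ref{lem class} force,'' your proof matches the paper's; the remaining bookkeeping about matching $\phi^{(\beta)}_i$ with the specialised $\overline{A_i}$ and the base case $\mathfrak{H}^{(\beta)}_{\omega_0}$ is exactly what Remarks \ref{rem operators} and \ref{rem polynomial} handle.
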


As immediate corollaries of this result one obtains a formula for the generalised degeneracy loci considered by Fulton and in particular the Thom-Porteous formula for connective $K$-theory. In fact,  the $\beta$-polynomials describing the loci $D_r(h)$ turn out to be symmetric in the Chern roots of the two bundles and therefore can be expressed in terms of their Chern classes. If we  denote the resulting polynomials by $\mathfrak{D}^{CK}_{(e,f,r)}$ with $e,f,r\in \NN$, then we have the following 

\begin{corollary}[Thom-Porteous formula]

Let $E\stackrel{h}\rightarrow F$ be a morphism of vector bundles of rank $e$ and $f$ over $X\in\SM$ and fix $r$ such that $0\leq r\leq min(e,f)$. Denote by $\textbf{t}$ the triple $(e,f,r)$ and assume that $codim(D_r(h),X)=(e-r)(f-r)$. Then in $CK^*(X)$  one  has
$$[D_r(h)]_{CK}=\mathfrak{D}^{CK}_{\textbf{t}}(c_i(F),c_j(E^\vee))\ .$$
\end{corollary}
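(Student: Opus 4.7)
The plan is to deduce the Thom--Porteous formula from Theorem 2 by the standard universal-Schubert reduction originally employed by Fulton in the Chow setting.

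First, I would introduce an auxiliary smooth bundle $\widetilde{X}\to X$ over which both $E$ and $F$ acquire tautological full flags --- concretely, the fibre product over $X$ of the full flag bundles of $F$ and of $E^\vee$. After pulling back $h$ to $\widetilde{X}$, the rank-$\leq r$ condition relative to the induced flags cuts out a Schubert variety $\Omega_{\omega(\mathbf{t})}$ inside a suitable full flag bundle, where $\omega(\mathbf{t})\in S_n$ (with $n=e+f$) is the Grassmannian permutation encoding the rectangular partition $((f-r)^{e-r})$. The codimension assumption $\mathrm{codim}(D_r(h),X)=(e-r)(f-r)$ ensures that this Schubert variety has the expected codimension and that $D_r(h)$ is the scheme-theoretic image under the projection $\widetilde{X}\to X$, so that $[D_r(h)]_{CK}$ is the pushforward of $[\Omega_{\omega(\mathbf{t})}]_{CK}$.

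Next, I would invoke Theorem 2 to express $[\Omega_{\omega(\mathbf{t})}]_{CK}=\mathfrak{H}^{(\beta)}_{\omega(\mathbf{t})}(\mathbf{x},\mathbf{y})$ in terms of the Chern roots $\mathbf{x}$ of the tautological quotients on $\widetilde{X}$ and $\mathbf{y}$ of the flag on the other factor. Because $\omega(\mathbf{t})$ is a Grassmannian permutation, with its unique descent at the position separating the variables coming from $F$ from those coming from $E^\vee$, general combinatorial properties of double $\beta$-polynomials imply that $\mathfrak{H}^{(\beta)}_{\omega(\mathbf{t})}$ is separately symmetric in the two groups of variables. Consequently it can be rewritten as a polynomial $\mathfrak{D}^{CK}_{\mathbf{t}}$ in the elementary symmetric functions of the two sets of roots, namely in the Chern classes $c_i(F)$ and $c_j(E^\vee)$.

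Finally, since these Chern classes descend from $\widetilde{X}$ to $X$, the projection formula makes the pushforward along $\widetilde{X}\to X$ act trivially on the polynomial expression, yielding the stated identity in $CK^*(X)$.

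The main obstacle is the second step: carefully tracking which set of roots carries the formal inverse $\chi_{CK}$ and verifying that the symmetry in each group, combined with this occurrence of $\chi_{CK}$, is exactly what forces the dual $E^\vee$ (rather than $E$) to appear on the right-hand side. This is precisely the place where the multiplicative formal group law of $CK^*$ is used via Theorem 2, and where the connective $K$-theory calculation genuinely extends both the Chow and the Grothendieck cases in a uniform way.
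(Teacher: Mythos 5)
Your overall skeleton is the right one and matches the paper's: introduce an auxiliary smooth scheme where $E$ and $F$ split, reduce the degeneracy locus to a Schubert-type locus governed by a Grassmannian permutation, invoke the Schubert formula in $CK^*$, and use the symmetry of $\mathfrak{H}^{(\beta)}$ in each block of variables to rewrite the answer via elementary symmetric polynomials, i.e.\ Chern classes. However, the mechanism by which you pass from the Schubert variety to $D_r(h)$ is backwards, and this is a genuine gap. You describe $[D_r(h)]_{CK}$ as the \emph{pushforward} of the Schubert class $[\Omega_{\omega(\mathbf{t})}]_{CK}$ from a flag bundle, with $D_r(h)$ the scheme-theoretic image of the Schubert variety. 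But those two classes do not even live in matching (co)dimension: $\Omega_{\omega(\mathbf{t})}$ has codimension $(e-r)(f-r)$ in a flag bundle of strictly larger dimension than $X$, while $D_r(h)$ has the same codimension in $X$ itself, so the projective pushforward along the flag bundle projection cannot equate them. The correct move, which the paper (following Fulton) uses both in Theorem~\ref{thm connSchubert} and in the corollary, is the opposite one: realize $D_r(h)$ as the \emph{preimage} of $\Omega_{\omega}$ under the section $s_{F_\bullet}:X\to\flag V$ determined by the quotient flag, and pull the Schubert class back along this section; the codimension hypothesis guarantees the pullback hits $[D_r(h)]_{CK}$. Likewise the descent from the splitting scheme $\widetilde{X}$ to $X$ should be argued through the split injectivity of $p^*:CK^*(X)\to CK^*(\widetilde{X})$ supplied by the projective bundle formula, not through a projection-formula manipulation of $p_*$; indeed $p_*(1)\ne 1$ for a positive-relative-dimension flag bundle in $CK^*$, so the pushforward does not ``act trivially.''

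Two smaller points. First, the ambient symmetric group is $S_{e+f-r}$, not $S_{e+f}$: the paper passes to $E'=E\oplus\mathbb{A}^{f-r}$ and $F'=F\oplus\mathbb{A}^{e-r}$ (both of rank $e+f-r$) so that the permutation is $\nu_{\mathbf{t}}\in S_{e+f-r}$, and it is precisely this $\nu_{\mathbf{t}}$ that enters the definition~(\ref{def symmetric}) of $\mathfrak{D}^{(\beta)}_{\mathbf{t}}$; your $n=e+f$ would produce a different (Kempf--Laksov style) Grassmannian permutation and would not directly match $\mathfrak{D}^{CK}_{\mathbf{t}}$ as the paper defines it. Second, when you set the superfluous flag variables to zero (those coming from the trivial summands), you should say explicitly that their first Chern classes vanish, which is what makes the specialization in~(\ref{def symmetric}) legitimate. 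Once the pullback-versus-pushforward direction is corrected and the bookkeeping aligned with $\nu_{\mathbf{t}}\in S_{e+f-r}$, the argument is the paper's.
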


Let us finish by describing the internal organisation of the paper. In the first section we review notations and results concerning algebraic cobordism and its relations with other oriented cohomology theories. In the second section we describe the geometric aspects of the problem, together with an outline of the proof of the classical results for $CH^*$ and $K^0$. Finally in the third section we apply Fulton's approach to algebraic cobodism first and later to connective $K$-theory, obtaining our main results. In the appendix we summarize the original definition of double $\beta$-polynomials and we derive a property necessary for the proof of the Thom-Porteous formula. 
     


\paragraph{Acknowledgements:} This work mainly consists of the results of my PhD thesis and I would like to take the opportunity to thank my advisors Marc Levine and Jerzy Weyman for suggesting this topic to me and for their uninterrupted help throughtout my studies and beyond. I also would like to thank Anders S. Buch and Jens Hornbostel for their careful reading of an early version of this work, for their useful suggestions and for their encouragement.

 The support of the NSF via the grant DMS-0801220 ''Motivic homotopy theory", of the Humbolt Foundation via the Humbold Professorship of Professor Levine and of the National Research Foundation of Korea (NRF) via the grant  funded by the Korean government (MSIP)  (No. 2013-042157) is gratefully acknowledged.

\paragraph*{Notations and conventions:}
 
Given a field $k$ of characteristic 0, we will denote by $\SCH$ the category of separated schemes of finite type over $\spec k$ and by $\SCH'$ its subcategory obtained by considering only projective morphisms. $\SM$ will represent the full subcategory of $\SCH$ consisting of schemes smooth and quasi-projective over $\spec k$. In general by smooth morphism we will always mean smooth and quasi-projective.

\section{Algebraic cobordism and other oriented cohomology theories}

The purpose of this section is to recall the definition of algebraic cobordism and of some related concepts as those of oriented cohomology theory, oriented Borel-Moore homology and formal group laws,  setting up the notations necessary to specialise cobordism formulas to other theories. For a detailed treatment of these topics we refer the reader to \cite{AlgebraicLevine}. We will also present some computation involving Chern roots and Chern classes.

\paragraph*{Oriented cohomology and oriented Borel-Moore homology theories:}

 Roughly speaking, an oriented cohomology theory (which we will abbreviate as OCT) is a contravariant functor from $\SM$ to the category of graded rings, together with a family of push-forward maps associated to projective morphisms. Such a functor has to satisfy, along with some obvious compatibilities,  two geometric properties: the extended homotopy property and the projective bundle formula.

 Most of the results in \cite{AlgebraicLevine} are actually obtained by making use of the dual (and for smooth schemes categorically equivalent) notion of oriented Borel-Moore homology theory (or OBM), which allows to enlarge the family of schemes taken under consideration. For our purposes an OBM will be a covariant functor from $\SCH'$ to the category graded abelian groups, endowed with an external product and pull-back maps for l.c.i. morphisms. As before, together with some functorial requirements, the theory is supposed to satisfy the extended homotopy property, the projective bundle formula,  as well as a property related to cellular decomposition.

Before we consider some examples, let us introduce the notion of fundamental class. Given an OBM $A_*$ it is possible to associate to any l.c.i. scheme $X$ its fundamental class by setting   
$$[X]_{A_*}:=\pi_X^*(1)\ ,$$
  where $\pi_X$ is the structural morphism and $1$ represents the identity in the coefficient ring. It is important to notice that this assignment is compatible with l.c.i. pull-backs and that when one restricts to smooth schemes and  considers the associated  OCT $A^*$, the fundamental class $[X]_{A^*}$ will coincide with the identity of $A^*(X)$.

Two fundamental examples of oriented Borel-Moore homology theories on $\SCH$ are given by the Chow group functor $X\mapsto CH_*(X)$ and by $G_0[\beta,\beta^{-1}]$, a graded version of the Grothendieck group of coherent sheaves $X\mapsto G_0(X)$. The graded structure is obtained by tensoring $G_0(X)$ with $\ZZ[\beta,\beta^{-1}]$ where $\text{deg}\, \beta=1$ and by modifying the pull-back and  push-forward maps as follows:
  $$f^*([\mathcal{E}]\cdot\beta^n)=[f^*(\mathcal{E})]\cdot \beta^{n+d}\ , \ \ 
g_*([\mathcal{E}]\cdot\beta^n)= \sum_{i=0}^{\infty} (-1)^i [R^i g_*(\mathcal{E})]\cdot \beta^{n}\ .$$
Here $d$ represents the relative dimension of the smooth equi-dimensional morphism $f$. In both these theories the general notion of fundamental class can be extended to include equi-dimensional schemes as well: for any $d$-dimensional scheme $X$ with irreducible components $X_1,\tred, X_n$ one sets 
$$i)\ [X]_{CH_*}:=\sum_{i=1}^n m_i[X_i]\ ;
 \quad  ii)\ [X]_{G_0[\beta,\beta^{-1}]}:=[\mathcal{O}_X]\cdot\beta^d\ .$$
In the first formula the coefficients $m_i$ are given by the length of the local rings $\mathcal{O}_{X,X_i}$.
 
 Provided one switches to cohomological notations, the restriction of these two functors to $\SM$ yields two examples of OCTs: the Chow ring $CH^*$ and a graded version of the Grothendieck ring of locally free sheaves which we will denote by $K^0[\beta,\beta^{-1}]$. Notice that in this case $\text{deg}\, \beta=-1$ and that pull-backs and push-forwards are given by
  $$f^*([\mathcal{E}]\cdot\beta^n)=[f^*(\mathcal{E})]\cdot \beta^n\ ; \ \ 
g_*([\mathcal{E}]\cdot\beta^n)= \sum_{i=0}^{\infty} (-1)^i [R^i g_*(\mathcal{E})]\cdot \beta^{n-d}\ ,$$
with $d$ representing the pure codimension of the projective morphism $g$.

\paragraph*{Chern classes and formal group laws:}
    As an immediate consequence of the projective bundle formula every OBM allows a theory of Chern classes operators $\widetilde{c_i}$ which, when restricted to the corresponding OCT, gives rise to a theory of Chern classes $c_i$. For sake of simplicity we will most often deal only with Chern classes, but the reader should be aware that all statements have a counterpart on the operator side. As it was pointed out in the introduction, in this general context it is no longer true that first Chern classes are linear with respect to the tensor product of line bundles. Instead, for any theory $A^*$ there exists two power series $F_A\in A^*(k)[[u,v]]$ and $\chi_{F_A}\in A^*(k)[[u]]$ such that for any line bundles $L$ and $M$ one has 
$$c_1(L\otimes M)=F_A(c_1(L),c_1(M))\ \ ,\ \ c_1(L^\vee)=\chi_{F_A}(c_1(L))\ .$$  
In fact the pair $(A^*(k),F_A)$ represents a commutative formal group law of rank one and $\chi_{F_A}$ is its inverse, i.e. $F_A(u,\chi_{F_A}(u))=0$. Let us recall that on a ring $R$ there always exists two basics examples of formal group laws  $(R,F_a)$ and, for any choice $b\in R$, $(R,F_m)$. They are given by
$$i)\ F_{a}(u,v)=u+v\quad\  ,\ \ \chi_{F_a}(u)=-u\ \ ; 
\quad ii)\  F_{m}(u,v)=u+v-buv\ \ , \ \chi_{F_m}(u)=\frac{-u}{1-bu}\ ;$$
 and are respectively referred to as \textit{additive} and \textit{multiplicative}. Moreover, if $b$ happens to be invertible $(R,F_m)$ is said to be \textit{periodic}. These two formal group laws are precisely the ones associated to the Chow ring and to $K^0[\beta,\beta^{-1}]\,$: for $CH^*$ one has $(\ZZ,F_{a})$ while for the Grothendieck ring one obtains  $(\ZZ[\beta,\beta^{-1}],F_{m})$ with $b=\beta$, as it easily follows from the formula $c_1(L)=1-[L^\vee]\beta^{-1}$. 

In \cite{Lazard} Lazard showed that there exists a formal group law $(\Laz, F_\Laz)$ which is universal: for any other law $(R,F_R)$ there exists a unique morphism $\varPhi_{(R,F_R)}:\Laz\rightarrow R$ which maps the coefficients of $F_\Laz$ onto those of $F_R$. In particular this can be applied to the formal group law arising from an OCT $A^*$ and we will denote the corresponding morphism by $\varPhi_A$. More in detail, the Lazard ring $\Laz$ is isomorphic to the polynomial ring $\ZZ[x_i]$ with $i\in \NN\setminus\{0\}$, while the universal formal group law is given by 
$$F_{\Laz}(u,v)=\sum_{i,j} a_{i,j}u^i v^j\ ,$$
where one canonically identifies $x_1$ with $a_{1,1}$ and all other coefficients $a_{i,j}$ can be expressed in terms of the first $i+j-1$-th variables. With this identification at hand it is easy to describe the maps arising from the formal group laws associated to the Chow ring and to $K^0$: $\varPhi_{CH}$ maps to zero all the variables $x_i$, while $\varPhi_{K^0[\beta, \beta^{-1}]}$ only differs at $x_1$, which is mapped to $\beta$.  From now on we will drop the underscript $_\Laz$ when writing both the universal formal group law and its inverse. Finally, let us observe that the Lazard ring can be given a grading by setting either $\text{deg}\,x_i=i$ or $\text{deg}\,x_i=-i\,$: we will denote the corresponding graded rings by $\Laz_*$ and $\Laz^*$.
   
\paragraph*{Algebraic cobordism:}
The main achievement in \cite{AlgebraicLevine} is the construction of algebraic cobordism, an OBM on $\SCH$ denoted by $\Omega_*\,$ which the authors prove to be universal among such theories and whose associated OCT on $\SM$ $\Omega^*$ is universal as well (\cite[Theorem 7.3]{AlgebraicLevine}). In other words, for every OBM $A_*$ there exists a unique morphism of OBMs $\vartheta_{A_*}: \Omega_*\rightarrow A_*$, i.e. a natural transformation of functors compatible with both l.c.i. pull-backs and the external products. Similarly one has a unique morphism $\vartheta_{A^*}$ for the corresponding OCTs. This property characterizes $\Omega^*$ as the exact algebro-geometric analogue of Quillen's complex cobordism $MU^*$, a fact that is reflected in the associated formal group laws as well: in both cases the coefficient ring is nothing but the Lazard ring and the formal group law is the universal one. 

Following the original description given in \cite{AlgebraicLevine}, instead of the more recent developed in \cite{LevineRevisited}, a cobordism cycle on $X$ can be written as 
$$[f:Y\rightarrow X,L_1,\tred, L_r]\ ,$$ 
where $f$ is a projective morphism and  $L_1,\tred,L_r$ are line bundles over $Y\in\SM$. Roughly speaking $\Omega_*$ is obtained from the free group generated by isomorphism classes of cobordism cycles  by taking successive quotients, each of which imposes a different geometric condition: the dimension axiom (Dim), the section axiom (Sect) and the formal group law axiom (FGL). The most important technical result, which is used to prove that $\Omega_*$ satifies the extended homotopy property as well as the projective bundle formula, is the existence of a short localization sequence 
$$\Omega_*(Z)\stackrel{i_*}\longrightarrow \Omega_*(X)\stackrel{j^*}\longrightarrow \Omega_*(U)\longrightarrow 0 \ $$
for any closed embedding $i:Z\rightarrow X$ with open complement $j:U\rightarrow X$. Finally one needs to define pull-back maps for l.c.i. morphisms: this is achieved by making use of the deformation to the normal cone, adjusting to the context the approach used by Fulton for the Chow ring in \cite{IntersectionFulton}. In this way not only $\Omega_*$ acquires the structure of an OBM, but one also obtains a product structure on its restriction to smooth schemes.
  
An important feature of algebraic cobordism is represented by the possibility to relate it to other theories thanks to its universality: for any choice of a formal group law $(R,F_R)\,$ the OBM 
$$\Omega^{(R,F_R)}_*:=\Omega_*\otimes_{\Laz} R$$
 arising from $\varPhi_{(R,F_R)}$ is universal among the theories with the chosen formal group law. Given an OBM $A_*$ it can be interesting to compare it with $\Omega_*^{(A_*(k),F_A)}$ and see whether or not the unique morphism of OBMs 
$$\vartheta^{(A_*{(k)},F_A)}_{A_*}:\Omega_*^{(A_*(k),F_A)}\rightarrow A_*$$
 is an isomorphism. Note the same problem can also be phrased in terms of OCTs. This perspective allowed Levine and Morel to relate algebraic cobordism to the Chow ring and the Grothendieck ring of locally free sheaves: they proved that the canonical morphisms
\begin{align}
i)\ \vartheta_{CH_*}^{(\ZZ,F_a)}:\Omega_*^{(\ZZ,F_a)}\rightarrow CH_*\ \ ;
\ \ ii)\ \vartheta^{(\ZZ[\beta,\beta^{-1}],F_m)}_{K^0[\beta,\beta^{-1}]}:\Omega^*_{(\ZZ[\beta,\beta^{-1}],F_m)}\rightarrow K^0[\beta,\beta^{-1}]\ \ ; \label{eq canonical}
\end{align}
are  respectively isomorphisms of OBMs and OCTs (\cite[Theorem 7.1.4]{AlgebraicLevine}). The second isomorphism was later extended to the OBM $G_0[\beta,\beta^{-1}]$  by Dai (\cite[Theorem 2.2.3]{ThesisDai}).

We close our general discussion on $\Omega_*$ by recalling a result that will be used in our main proof.

\begin{lemma} \label{lem top Chern}
Let $p:E\rightarrow X$ be a vector bundle of rank $d$ on $X\in\SM$. Suppose that $E$ has a section $s:X\rightarrow E$ such that the zero-subscheme of $s$, $i:Z\rightarrow X$ is a regularly embedded closed subscheme of codimension $d$. Then $c_d(E)=i_*[Z]_{\Omega^*}=[i:Z\rightarrow X]$.
\end{lemma}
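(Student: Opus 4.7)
The second equality $i_*[Z]_{\Omega^*}=[i:Z\rightarrow X]$ is built into the cobordism-cycle notation for $\Omega^*$, so the real content is the identity $c_d(E)=[i:Z\rightarrow X]$. The plan is to derive this from three ingredients already available in \cite{AlgebraicLevine}: the base change formula for transversal l.c.i.\ squares, the extended homotopy property, and the self-intersection formula for the zero section of a vector bundle.

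First I would observe that the hypothesis on $s$ identifies $Z$ with the scheme-theoretic fibre product $X\times_{E}X$ obtained by intersecting the regular section $s:X\rightarrow E$ with the zero section $o:X\rightarrow E$ of the bundle projection $p:E\rightarrow X$. Both $s$ and $o$ are regular embeddings of codimension $d$, the intersection has the expected codimension $2d$ inside $E$, and the two resulting projections $Z\rightarrow X$ both coincide with $i$. Granting Tor-independence of this Cartesian square, the base change formula yields
$$s^{*}o_{*}(1)=i_{*}i^{*}(1)=i_{*}(1)=[i:Z\rightarrow X]\ .$$

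Next I would compare $s^{*}$ and $o^{*}$ by means of the morphism $h:X\times\mathbb{A}^{1}\rightarrow E$ defined by $h(x,t):=t\cdot s(x)$, which satisfies $h\circ\iota_{1}=s$ and $h\circ\iota_{0}=o$ for the two fibre inclusions $\iota_{t}:X\rightarrow X\times\mathbb{A}^{1}$. The extended homotopy property forces $\iota_{0}^{*}=\iota_{1}^{*}$, whence $s^{*}=o^{*}$ as maps $\Omega^{*}(E)\rightarrow\Omega^{*}(X)$. Combined with the previous step this yields $[i:Z\rightarrow X]=s^{*}o_{*}(1)=o^{*}o_{*}(1)$, and the self-intersection formula for the zero section identifies the right-hand side with $c_{d}(N_{X/E})=c_{d}(E)$, closing the argument.

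The step that requires the most care is the Tor-independence needed to justify the first equality: in a local trivialisation of $E$ the ideal of $o(X)$ is generated by the fibre coordinates, and pulling back along $s$ recovers precisely the ideal $(s_1,\ldots,s_d)$ cutting out $Z$, so the intersection has the expected codimension $2d$ in $E$ and the refined l.c.i.\ base change of Levine-Morel applies. Once that is in place the remainder of the argument is a formal diagram chase using tools already developed in \cite{AlgebraicLevine}.
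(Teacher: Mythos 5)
Your argument is correct, but it is worth knowing that the paper does not prove this lemma at all: it simply observes that the statement is \cite[Lemma 6.6.7]{AlgebraicLevine} restricted to a smooth base, and cites it. What you have written is essentially a reconstruction of the standard proof of that cited lemma (it is the same deformation argument Fulton uses for the Chow-group version): identify $Z$ with the Tor-independent intersection of the section $s$ with the zero section $o$, use base change to get $s^{*}o_{*}(1)=[i:Z\rightarrow X]$, use the $\mathbb{A}^{1}$-homotopy $h(x,t)=t\cdot s(x)$ together with homotopy invariance to replace $s^{*}$ by $o^{*}$, and conclude with $o^{*}o_{*}(1)=c_{d}(E)$. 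All three ingredients are indeed available for $\Omega^{*}$: (BM2) in Levine--Morel covers l.c.i.\ pullback through transverse (Tor-independent) squares, your local Koszul-complex verification of Tor-independence is exactly right (and uses that $X$, being smooth, is Cohen--Macaulay, so $d$ generators of a height-$d$ ideal form a regular sequence), and the extended homotopy property gives $\iota_{0}^{*}=\iota_{1}^{*}=(pr_{X}^{*})^{-1}$. The one caveat is your last step: the identity $o^{*}o_{*}(1)=c_{d}(E)$ for the zero section is not a formal triviality in Levine--Morel's framework, where Chern classes are defined through the projective bundle formula and the (Sect) axiom only handles line bundles directly; establishing it for arbitrary rank (via the splitting principle and the Whitney formula) is the substantive part of the very lemma the paper cites. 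So your proof is sound, but be aware that it shifts rather than eliminates the appeal to Levine--Morel: you are deriving the general-section statement from the zero-section case, which is where the real work in \cite[Lemma 6.6.7]{AlgebraicLevine} lies.
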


\begin{proof}
The lemma is a restatement of \cite[Lemma 6.6.7]{AlgebraicLevine} for the special case of a smooth scheme. 
\end{proof}

\paragraph*{Connective $K$-theory}
 Let us now focus on $CK_*:=\Omega_*^{(\ZZ[\beta],F_m)}$ the OBM obtained from algebraic cobordism imposing the multiplicative formal group law. In view of the isomorphisms in (\ref{eq canonical}), this theory specializes to both $CH_*$ and $G_0[\beta,\beta^{-1}]$. 
Using methods from motivic homotopy theory Dai and Levine endowed this theory with a well-behaved fundamental class. In \cite[Corollary 6.4]{ConnectiveDai} they prove that for any equi-dimensional scheme $X$ of dimension $d$ the map 
\begin{align}
\psi_X:CK_d(X) \rightarrow G_0[\beta,\beta^{-1}]_d(X) \label{eq isomorphism}
\end{align}
obtained by restricting $\vartheta_{G_0[\beta,\beta^{-1}]}^{CK}$ to the $d$-th graded component is an isomorphism. This result allows them to define the fundamental class of $X$ as
\begin{align}
[X]_{CK}:=\psi_X^{-1}([X]_{G_0[\beta,\beta^{-1}]})  \label{eq fundamental}\ .
\end{align}
Moreover, they show that this assignment is compatible with l.c.i. pull-backs (\cite[Theorem 7.4]{ConnectiveDai}) and that it specializes to the fundamental classes of $CH_*$ and $G_0[\beta,\beta^{-1}]$ under the morphisms $\vartheta_{CH}^{CK}$ and $\vartheta_{G_0[\beta,\beta^{-1}]}^{CK}$ (\cite[Proposition 7.5]{ConnectiveDai}).  

Building on these results we now prove a lemma that allows us to relate the fundamental classes of Schubert varieties and those of their desingularisation.

\begin{lemma}  \label{lem class}
Let $X\in\SCH$ be equi-dimensional and with at worst rational singularities. Let $f:Y\rightarrow X$ be a resolution of singularities of $X$. Then $f_*([X]_{CK})=[Y]_{CK}$.  
\end{lemma}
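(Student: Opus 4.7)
The plan is to reduce the desired equality (with the obvious direction corrected, so that $f_*([Y]_{CK}) = [X]_{CK}$, since $f_*$ goes from $Y$ to $X$) to a corresponding statement in $G_0[\beta,\beta^{-1}]_*$, where it becomes the familiar push-forward computation for a resolution of a scheme with rational singularities. The central tool will be the Dai--Levine isomorphism
$$\psi_X:CK_d(X)\stackrel{\sim}{\longrightarrow}G_0[\beta,\beta^{-1}]_d(X)$$
of (\ref{eq isomorphism}), which is precisely what makes the fundamental class $[X]_{CK}$ well-defined via (\ref{eq fundamental}).

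First, I would observe that since $f$ is proper and birational, $Y$ is equi-dimensional of the same dimension $d$ as $X$, and push-forward in an OBM preserves the dimensional grading. Consequently both $f_*([Y]_{CK})$ and $[X]_{CK}$ lie in the top-dimensional piece $CK_d(X)$, on which $\psi_X$ is an isomorphism. It therefore suffices to check the equality after applying $\psi_X$.

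Next, I would invoke the naturality of $\vartheta^{CK}_{G_0[\beta,\beta^{-1}]}$ (a morphism of OBMs, hence compatible with projective push-forwards) to rewrite
\begin{align*}
\psi_X\bigl(f_*([Y]_{CK})\bigr)=f_*\bigl(\psi_Y([Y]_{CK})\bigr)=f_*\bigl([\mathcal{O}_Y]\cdot\beta^d\bigr).
\end{align*}
The explicit push-forward formula in $G_0[\beta,\beta^{-1}]$ (no shift on the OBM side) then expands this as $\sum_{i\geq 0}(-1)^i[R^if_*\mathcal{O}_Y]\cdot\beta^d$. At this point the hypothesis that $X$ has at worst rational singularities is invoked: by definition this means $f_*\mathcal{O}_Y=\mathcal{O}_X$ and $R^if_*\mathcal{O}_Y=0$ for $i>0$, so the sum collapses to $[\mathcal{O}_X]\cdot\beta^d=[X]_{G_0[\beta,\beta^{-1}]}=\psi_X([X]_{CK})$. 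Inverting $\psi_X$ yields the claim.

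The main obstacle, such as it is, is keeping the grading bookkeeping straight so that one stays in the top-dimensional component where $\psi_X$ is an isomorphism; everything else is either formal (naturality of $\vartheta$) or geometric content that is already packaged by the rational-singularities hypothesis. One should take care that $\psi$ is only an isomorphism in the top degree, so the argument crucially uses that $\dim Y=\dim X$ and that push-forward in the OBM is dimension-preserving — if either of these fails, the reduction to $G_0[\beta,\beta^{-1}]$ is no longer enough to conclude.
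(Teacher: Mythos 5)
Your proof is correct and follows essentially the same route as the paper's: reduce to the top-dimensional graded piece via the Dai--Levine isomorphism $\psi$, use naturality of $\vartheta^{CK}_{G_0[\beta,\beta^{-1}]}$ to transfer the equality to $G_0[\beta,\beta^{-1}]$, and conclude from rational singularities via the explicit push-forward formula. You also correctly flag the typo in the statement (it should read $f_*([Y]_{CK})=[X]_{CK}$, since $f_*$ goes from $Y$ to $X$); your version merely spells out the grading and naturality bookkeeping that the paper's terse proof leaves implicit.
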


\begin{proof}
Let $d$ be $\text{dim}_k X=\text{dim}_k Y$ and let us consider the commutative diagram resulting from applying the morphism of OBMs $\vartheta_{G_0[\beta,\beta^{-1}]}^{CK}$ to $f:Y\rightarrow X$. By restricting our attention to the $d$-th graded component of this diagram we have that the morphisms $\psi_X$ and $\psi_Y$ of (\ref{eq isomorphism}) are isomorphisms. Hence one is reduced to prove the analogous equality for the $G_0[\beta,\beta^{-1}]$ fundamental classes, which immediately follows from the fact that X has rational singularities.
\end{proof}



\paragraph*{Computations with Chern roots}
For the reader's convenience we finish this section by presenting the adaptation of some well-known computations with Chern roots to the more general context of OCTs. Let us recall that to a vector bundle $E\rightarrow X$ of rank $n$ one associates its \textit{Chern polynomial} by setting $c_t(E):=\sum_{i=0}^n c_i(E) t^i \in A^*(X)[t]$ and that the leading term is referred to as the \textit{top Chern class}. Given a factorization of the Chern polynomial into linear terms  $c_t(E)=\prod_{i=1}^n (1+y_i t)$ the elements $y_i$ are the \textit{Chern roots} of $E$. Such a factorization can be always achieved by the so-called splitting principle by passing to the full flag bundle $\flag \,E$, in which case $y_i\in A^*(\flag\, E)$, but all that is necessary is that $E$ has a full flag: one can take the $y_i$ to be the first Chern classes of the line bundles arising from the filtration. Hence if one assumes that $E$ already comes equipped with a full flag there is no need to consider $A^*(\flag\, E)$ and the Chern roots can be directly thought as elments of $A^*(X)$. Note in particular that the top Chern class is simply the product of all the Chern roots. 
    
We now want to examine what is the effect on the Chern roots and more specifically on the top Chern class of two operations: taking the dual and taking the tensor product. It is well-known that in the Chow ring (see for instance \cite[Remark 3.2.3\ (a)-(b)]{IntersectionFulton}) one obtains 
$$c_t(E^\vee)=\prod_{i=1}^{n}(1-y_i t)\quad, \quad c_t(E\otimes F)=\prod_{i=1}^n\prod_{j=1}^m (1+(x_i+y_j)t) \ ,$$
where $x_i$ represents the Chern roots of $F$. Since they essentially follow from the Whitney formula, these equalities also hold in any OCT $A^*$, provided one replaces the sum with the formal sum $F_A$ and the inverse with the formal inverse $\chi_A$. More precisely one has
$$c_t(E^\vee)=\prod_{i=1}^{n}(1+\chi_A(y_i) t)\quad, \quad 
c_t(E\otimes F)=\prod_{i=1}^n\prod_{j=1}^m (1+F_A(x_i,y_j)t)\ .$$ 
These two formulas can be assembled to obtain the following lemma.
\begin{lemma} \label{cor Chern}
Let $E$ and $F$ be two vector bundles over $X$ respectively of rank $e$ and $f$. Let $E\unddot=(E_1\subset E_2\subset ... \subset E_e=E)$ and   $F\unddot=(F=F_f\srarrow F_{f-1}\srarrow \tred \srarrow F_1)$ be full flags of $E$ and $F$ respectively. Set $y_j=c_1(E_j/E_{j-1})$ and  $x_i=c_1({\rm Ker}(F_i\srarrow F_{i-1}))$ for $j\in\{1,\tred, e\}$, $i\in\{1,\tred, f\}$. Then for any OCT $A^*$ the Chern polynomial and the top Chern class of $E^\vee\otimes F$ are given by:
$$c_t(E^\vee\otimes F)=\prod_{i=1}^f\prod_{j=1}^e (1+F(x_i,\chi_A(y_j))t) \quad, \quad c_{ef}(E^\vee\otimes F)=\prod_{i=1}^f\prod_{j=1}^e F_A(x_i,\chi(y_j))\ .$$
\end{lemma}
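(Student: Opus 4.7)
The plan is to identify explicit Chern roots for both $E^\vee$ and $F$ and then plug them into the tensor-product Chern-polynomial formula displayed just before the lemma. The hypothesis that $E$ and $F$ already carry full flags is what makes the Chern roots live directly in $A^*(X)$, so that no passage to $A^*(\flag E)$ or $A^*(\flag F)$ is necessary.

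First I would dispose of $F$. The quotient flag $F=F_f \srarrow F_{f-1}\srarrow \trecd \srarrow F_1$ has successive kernels $K_i := \mathrm{Ker}(F_i\srarrow F_{i-1})$, each of which is a line bundle by hypothesis. Iterating the Whitney sum formula along this filtration gives $c_t(F)=\prod_{i=1}^{f}(1+x_i t)$, so the $x_i$ are honest Chern roots of $F$ in $A^*(X)$.

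Next I would handle $E^\vee$. Dualising the inclusions of $E_1\subset\trecd\subset E_e=E$ produces a full quotient flag $E^\vee=E_e^\vee \srarrow\trecd\srarrow E_1^\vee$ whose successive kernels are the line bundles $(E_j/E_{j-1})^\vee$. Applying $c_1(L^\vee)=\chi_A(c_1(L))$ shows that the Chern roots of $E^\vee$ in $A^*(X)$ are $\chi_A(y_j)$, consistent with the formula $c_t(E^\vee)=\prod_{j=1}^{e}(1+\chi_A(y_j)t)$ recalled just above the lemma.

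With the Chern roots $\{x_i\}$ and $\{\chi_A(y_j)\}$ in hand, I would then substitute into the tensor-product formula
$$c_t(E^\vee\otimes F)=\prod_{i=1}^{f}\prod_{j=1}^{e}\bigl(1+F_A(\alpha_i,\beta_j)t\bigr)$$
with $\alpha_i=x_i$ and $\beta_j=\chi_A(y_j)$, yielding the first asserted identity. The top Chern class is then the coefficient of $t^{ef}$, which is exactly the product of all the $F_A(x_i,\chi_A(y_j))$, giving the second identity. No serious obstacle is expected; the only point deserving care is the bookkeeping that the dual of a full subbundle flag is a full quotient flag (and vice versa), so that both bundles genuinely admit their Chern roots as first Chern classes of line bundles on $X$ itself rather than on an auxiliary flag bundle.
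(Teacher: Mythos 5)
Your proof is correct and follows essentially the same route the paper takes: the text preceding the lemma records the dual and tensor-product formulas and then simply remarks that they "can be assembled," and your argument is just a careful unpacking of that assembly (identifying the Chern roots of $F$ and $E^\vee$ from their flags via Whitney, then substituting into the tensor-product formula and reading off the top coefficient). The extra care you take in noting that a full subbundle flag of $E$ dualizes to a full quotient flag of $E^\vee$, so that the Chern roots really live in $A^*(X)$, is exactly the point the paper addresses in the paragraph above the lemma.
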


\section{The classical cases: $CH^*$ and $K^0$}

The aim of this section is to briefly present the background material  on which the two proofs of the Thom-Porteous formula rely. We begin by introducing the algebraic side of the picture represented by the double Schubert and Grothendieck polynomials of Lascoux and Sch{\"u}tzenberger. We will present these two families through a unification proposed by Fomin and Kirillov. We continue by describing the geometric setup which includes degeneracy loci for filtered bundles, Schubert varieties and Bott-Samelson resolutions. 
We conclude by giving an account of the sequence of intermediate steps that leads towards the formula.

\subsection{Double Schubert, Grothendieck and $\beta$-polynomials}

Double Schubert polynomials $\mathfrak{S}_\omega$ and double Grothendieck polynomials $\mathfrak{G}_\omega$ are two families of polynomials over $\ZZ$, both indexed by elements of the symmetric group. In \cite{GrothendieckFomin} Fomin and Kirillov unified these two families in the double $\beta$-polynomials, which are defined over $\ZZ[\beta]$ and specializes to the other two kinds for appropriate values of $\beta$. 

    Before we proceed with the definition, let us fix some notations. For $n\in\NN$, we will denote by $s_i$ the $i$-th elementary transposition $(i\ i+1)\in S_n$ and by $l$ the length function: $l(\omega)$ is defined as the minimal number of elementary transpositions needed to express $\omega\in S_n$ and it also coincides with the number of its inversions.
 Note that on $S_n$ the length function achieves a maximum at $\omega_0^{(n)}=(1\,n)(2\,n-1)\trecd (\lfloor n/ 2\rfloor\,\lceil n/2\rceil)$. We will generally drop the superscript $^{(n)}$ unless there is an ambiguity on the ambient symmetric group. Given an $l$-tuple $I=(i_1,\tred,i_l)$ of indices $i_j\in\{1,\tred, n-1\}$, we will denote by $s_I$ the product $s_{i_1}\tred s_{i_l}$ and we will say that $I$ is a minimal decomposition of $s_I$ if $l=l(s_I).$ If we denote by $T\subseteq \NN^3$  the set of triples $(t_1,t_2,t_3)$ such that $t_3\leq\min(t_1,t_2)$, then for any $\textbf{t}=(s,t,u)\in T$ we define $\nu_{\textbf{t}}\in S_{s+t-u}$ as
$$\nu_{\textbf{t}}:=
\left( \begin{array}{ccccccccc}
1 & \trecd & u & u+1&  \trecd &  t & t+1& \trecd & s+t-u \\
1 & \trecd & u &  s+1& \trecd & s+t-u & u+1 & \trecd & s
\end{array} \right)\ .$$
 As a general principle we will shorten the dependence by a group of variables by using bold letters.

 The definition of double $\beta$-polynomials $\mathfrak{H}^{(\beta)}_\omega(\textbf{x},\textbf{y})$ is given recursevely: one sets the polynomial associated to the longest permutation $\omega_0$ and the other members of the family are obtained from it by means of the so-called divided difference operators. For each $i\in\{1,\tred,n-1\}$ the $\beta$-divided difference operator~$\phi^{(\beta)}_i$ on $\ZZ[\beta][\bf{x},\bf{y}]$ are defined by setting 
\begin{align}\label{def H1}
\phi^{(\beta)}_i P=(1+\sigma_i)\frac{(1+\beta x_{i+1})P}{x_i-x_{i+1}}=\frac{(1+\beta x_{i+1}) P-(1+\beta x_{i})\sigma_i (P)}{x_i-x_{i+1}}\ ,
\end{align}
where $\sigma_i$ is the operator exchanging $x_i$ and $x_{i+1}$ and $1$ represents the identity operator. The initial element of the recursion is given by
\begin{align}\label{def H2}
\mathfrak{H}^{(\beta)}_{\omega_0}:=\prod_{i+j\leq k}(x_i+y_j+\beta x_i y_j)\ .
\end{align}
For all other elements of $S_n$ there exists an elementary transposition $s_i$ such that $l(\omega)<l(\omega s_i)$ and for those one sets 
\begin{align}\label{def H3}
\mathfrak{H}^{(\beta)}_\omega:=\phi^{(\beta)}_i \mathfrak{H}^{(\beta)}_{\omega s_i}\ .
\end{align} 
Double Schubert and Grothendieck polynomials, together with the corresponding divided difference operators, are then recovered through the following formulas:
$$i)\  \mathfrak{S}_\omega:=\mathfrak{H}^{(0)}_\omega(x_1,\tred,x_n,-y_1,\tred,-y_n) \ ,  \ \ \partial_i:=\phi^{(0)}_i\ ;
\ \  ii)\  \mathfrak{G}_\omega:=\mathfrak{H}^{(-1)}_\omega\ ,\ \ \pi_i:=\phi^{(-1)}_i\ .$$

From now on we will skip the superscript $^{(\beta)}$ for both the operators and the polynomials, unless we set $\beta$ equal to a specific value.

\begin{remark} \label{rem braid}
 A priori it is not evident neither that this is a good definition, since different decompositions of $\omega_0\omega$ in elementary transpositions might yield different polynomials, nor that $\mathfrak{H}_\omega$ should be independent of the symmetric group $\omega$ belongs to. While the first point follows from the fact that the operators $\phi_i$ satisfy the braid relations, hence allowing to define operators $\phi_\omega$ independent of the decomposition, the second issue can be settled by verifying that $\mathfrak{H}_{\omega_0^{(n)}}$ is unchanged if $\omega_0^{(n)}$ is viewed as an element of $S_{n+1}$. 
\end{remark}

\begin{remark}
In order to recover the original definition of double Grothendieck polynomials as it is given in \cite{PieriFulton}, it is necessary to allow all variables to be invertible and make the following change of coordinates: $x_i \mapsto 1-x_i^{-1}\ ,\  y_i \mapsto 1-y_i$. 
\end{remark}

In \cite{GrothendieckFomin} the authors also propose an alternative definition of $\beta$-polynomials, viewed as coefficients of a particular element in a Hecke algebra and this approach proves to be well suited to extend some known properties of double Schubert polynomials. We refer the interested reader to the appendix, in which the construction is reviewed, together with the proof of the equivalence of the two definitions and some immediate consequences. The main point for our discussion is that one has the following

\begin{lemma}[Appendix, lemma \ref{lem symmetry}]
If $\omega(i)<\omega(i+1)$ then $\mathfrak{H}^{(\beta)}_{\omega}$ is symmetrical in $x_i$ and $x_{i+1}$. Also, if $\omega^{-1}(i)<\omega^{-1}(i+1)$, then $\mathfrak{H}^{(\beta)}_{\omega}$ is symmetrical in $y_i$ and $y_{i+1}$.
\end{lemma}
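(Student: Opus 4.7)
The plan is to reduce both statements to one general observation about the $\beta$-divided difference (\ref{def H1}): the explicit formula
$\phi_i^{(\beta)} = (1+\sigma_i)\circ\bigl(\tfrac{1+\beta x_{i+1}}{x_i-x_{i+1}}\,\cdot\,\bigr)$
factors through the symmetrization $(1+\sigma_i)$, so that for any polynomial $P$ the output $\phi_i^{(\beta)}P$ is automatically $\sigma_i$-invariant, i.e.\ symmetric in $x_i$ and $x_{i+1}$. It is therefore enough, in each case, to exhibit $\mathfrak{H}_\omega^{(\beta)}$ as the image of $\phi_i^{(\beta)}$ (or of an analogous operator on the $y$-side).

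For the first statement I would use the standard characterization of length by inversions: the hypothesis $\omega(i)<\omega(i+1)$ is exactly the condition $l(\omega s_i)=l(\omega)+1$, since right multiplication by $s_i$ swaps the values in positions $i$ and $i+1$. Hence the recursion (\ref{def H3}) applies to the pair $\omega,\omega s_i$ and yields $\mathfrak{H}_\omega^{(\beta)} = \phi_i^{(\beta)} \mathfrak{H}_{\omega s_i}^{(\beta)}$. The observation above then immediately forces $\mathfrak{H}_\omega^{(\beta)}$ to be $\sigma_i$-invariant, giving the $x_i \leftrightarrow x_{i+1}$ symmetry.

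The second statement is more delicate because the operators $\phi_i^{(\beta)}$ defined in (\ref{def H1}) act only on the $x$-variables; there is no $y$-recursion built directly into (\ref{def H1})--(\ref{def H3}). The plan is to invoke the Hecke-algebra description of the $\beta$-polynomials developed in the appendix: $\mathfrak{H}_\omega^{(\beta)}$ can be obtained as a coefficient of a product of elementary factors indexed by pairs $(i,j)$ with $i+j\le n$, and in this presentation the $x$- and $y$-variables enter in parallel, governed respectively by right and left multiplication in $S_n$. From this one derives a companion recursion of the form $\mathfrak{H}_\omega^{(\beta)} = \phi_i^{(\beta),y}\mathfrak{H}_{s_i\omega}^{(\beta)}$ whenever $l(s_i\omega)=l(\omega)+1$, where $\phi_i^{(\beta),y}$ is the divided difference of exactly the same shape as $\phi_i^{(\beta)}$ but with $x$-variables replaced by $y$-variables (and $\sigma_i$ replaced by its $y$-analogue $\sigma_i^{(y)}$). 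Since $\omega^{-1}(i)<\omega^{-1}(i+1)$ translates into $l(s_i\omega)=l(\omega)+1$, the factorisation of $\phi_i^{(\beta),y}$ through $(1+\sigma_i^{(y)})$ gives the desired $y_i\leftrightarrow y_{i+1}$ symmetry by the identical one-line argument.

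The main obstacle is thus not the symmetry argument itself, which is a formal consequence of the shape of the divided difference, but rather the establishment of the companion $y$-recursion. This requires the Hecke-algebra reformulation of (\ref{def H2})--(\ref{def H3}) to be set up carefully and the equivalence of the two definitions verified; once those are available in the appendix, both halves of the lemma collapse to the same observation about $(1+\sigma_i)$ producing invariants.
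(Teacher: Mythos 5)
For the first statement your argument is exactly the paper's: $\omega(i)<\omega(i+1)$ forces $l(\omega s_i)=l(\omega)+1$, the recursion (\ref{def H3}) expresses $\mathfrak{H}_\omega^{(\beta)}$ as $\phi_i^{(\beta)}$ of something, and $\phi_i^{(\beta)}$ factors through $(1+\sigma_i)$, so its output is automatically $\sigma_i$-invariant.

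For the second statement you take a genuinely different route. The paper does not build a companion $y$-recursion; instead it proves the duality identity $\mathfrak{H}_{\omega^{-1}}(\textbf{x},\textbf{y})=\mathfrak{H}_{\omega}(\textbf{y},\textbf{x})$ (Lemma \ref{lem beta pol}), obtained from the Hecke-algebra presentation by reversing the order of the elementary factors, and then reads off the $y$-symmetry by applying the already-proved $x$-statement to $\omega^{-1}$ and swapping the two families of variables. Your proposal would instead transport the $x$-side machinery (the analogue of Proposition \ref{prop operator} and its corollary) to the $y$-side to get $\mathfrak{H}_\omega^{(\beta)}=\phi_i^{(\beta),y}\mathfrak{H}_{s_i\omega}^{(\beta)}$ when $l(s_i\omega)=l(\omega)+1$, and then apply the same $(1+\sigma_i)$-invariance observation. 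This is correct and the recursion you want does hold (indeed it follows formally from Lemma \ref{lem beta pol} combined with the $x$-recursion), but it requires redoing the operator computation on the left factor $\widetilde H(\textbf{y})$ of $H(\textbf{x},\textbf{y})=\widetilde H(\textbf{y})H(\textbf{x})$. The paper's route is more economical: one short reversal-of-factors argument yields the $\omega\leftrightarrow\omega^{-1}$, $\textbf{x}\leftrightarrow\textbf{y}$ duality, after which the $y$-case is free. Your route has the mild advantage of making the $y$-recursion explicit as a tool in its own right, but at the cost of duplicating the Hecke-algebra analysis.
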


It follows from this lemma that for any $\mathbf{t}\in T$ the polynomials $\mathfrak{H}_{\nu_{\textbf{t}}}$ are separately symmetric in $x_1$, $\tred\,$, $x_t$ and in $y_1$, $\tred\,$, $y_s$. This implies that if we set the remaining variables equal to 0, we can express these polynomials by means of the elementary symmetric functions $e_i(\textbf{x})$ and $e_i(\textbf{y})$: 
\begin{align}
\mathfrak{D}^{(\beta)}_{\textbf{t}}(e_1(\textbf{x}),\tred, e_t(\textbf{x}),e_1(\textbf{y}),\tred,e_s(\textbf{y})):=
\mathfrak{H}^{(\beta)}_{\nu_{\textbf{t}}}(x_1,\tred,x_t,0,\tred,0,y_1,\tred,y_s,0,\tred,0) \label{def symmetric}
\end{align}
We define $\mathfrak{D}^{CH}_{\textbf{t}}$ and $\mathfrak{D}^{K^0}_{\textbf{t}}$ to be the polynomials obtained by specializing the value of $\beta$ to 0 and~-1.
\subsection{Degeneracy loci, Schubert varieties and Bott-Samelson resolutions}

 Given a morphism $h:E\rightarrow F$ between vector bundles respectively of rank $e$ and $f$ over $X\in \SM$, we can associate to any $r$, with $0\leq r\leq min(e,f)$, the $r$-th degeneracy locus 
$$D_r(h):=Z(\wedge^{r+1}h)=\{x\in X\ |\ {\rm rank}(h(x):E(x)\rightarrow F(x))\leq r\}\ , $$
i.e. the zero scheme of the section of $\text{Hom}(\wedge^{r+1}E,\wedge^{r+1}F)$ corresponding to $\wedge^{r+1}h\,$. 
The notion of degeneracy locus can be conveniently extended to the case of vector bundles equipped with full flags. Given $E\unddot=(E_1\subset\tred \subset E_e=E)$  a full flag of subbundles of $E$, $F\unddot=(F=F_f\srarrow\tred\srarrow F_1)$ a full flag of quotient bundles of $F$ and set of rank conditions $\textbf{r}:\{1,\tred,e\}\times\{1,\tred, f\}\rightarrow \NN$, set
$$\Omega_\textbf{r}(E_{\textbf{\textbullet}},F_{\textbf{\textbullet}},h):=\bigcap_{(i,j)} D_{\textbf{r}(i,j)}(h_{ij})=\{x\in X \ |\ {\rm rank}(h_{ij}(x):E_i(x)\rightarrow F_j(x))\leq \textbf{r}(i,j)\ \forall i,j \}\ .$$
Let us recall that to every full flag, either of subbundles or of quotient bundles, it is possible to associate a filtration into linear factors: for $F\unddot$ one considers $\{L_i^{F\unddot}\}_{i\in\{1,\tred,n\}}$ with 
$L^{F\unddot}_i:={\rm Ker\,}(F_i\srarrow F_{i-1})$ and for $E\unddot$ one takes $\{L_i^{E/E\unddot}\}_{i\in\{1,\tred, n\}}$, where $E/E\unddot$ is the obvious full flag of quotient bundles obtained from~$E\unddot$.


In our discussion we will restrict our attention to a particular family of sets of rank conditions, indexed by the elements of the symmetric group. For $\omega\in S_n$ we set
$$\textbf{r}_\omega (i,j):=|\{l\leq j \  |\ \omega (l)\leq i \}|\ .$$ 

\begin{remark}\label{remark locus}
The family $\{\textbf{r}_\omega\}_{\omega\in S_n}$ is general enough to allow to express in the form $\Omega_{\textbf{r}_\omega}(E'\unddot,F'\unddot,h')$ all degeneracy loci $D_r(h)$, provided $E$ and $F$ are already equipped with full flags.  To achieve this one sets $E':=E\oplus \mathbb{A}_X^{f-r}$, $F':=F\oplus \mathbb{A}_X^{e-r}$ and defines $h':E'\rightarrow F'$ by extending $h$ by 0 on $\mathbb{A}^{f-r}_X$. The full flags of $E'$ and $F'$ are obtained by extending those of $E$ and $F$ with trivial line bundles. Finally, for the permutation one sets
$w=\nu_{\textbf{t}}$ with $\textbf{t}=(e,f,r)$. It is possible to check that both degeneracy loci are defined by the same equations.
\end{remark}

\begin{remark}\label{remark id}
It is worth noting that it is not restrictive to assume that the two vector bundles coincide and that the morphism $h$ is just the identity: both bundles can be replaced by their direct sum. By considering the embedding $E\stackrel{id_E\oplus h}{\longrightarrow} E\oplus F$ and the projection $pr_2:E\oplus F\rightarrow F$, it is easy to extend the given flags to $E'\unddot$ and $F'\unddot$ in such a way that $F'_i=(E\oplus F)/E'_{2n-i}$ and it can be checked that $\Omega_{\textbf{r}_\omega}(E\unddot,F\unddot,h)$ and $\Omega_{\textbf{r}_\omega}(E'\unddot,F'\unddot,id_{E\oplus F})$ are defined by the same equations. 
\end{remark}

We now leave the generality of degeneracy loci over any scheme and we focus on the universal case represented by Schubert varieties. For this we need first to introduce the full flag bundle. Given a vector bundle $V\rightarrow X$ of rank $n$, we will denote by $\flag V\stackrel{\pi}\rightarrow X$ the full flag bundle of $V$. This scheme, which can be explicitly constructed as an iterated projective bundle, is the relative version of the flag manifold and, precisely as the flag manifold, comes equipped with a universal full flag of quotient bundles $Q\unddot=(V=Q_n\srarrow Q_{n-1}\srarrow \tred\srarrow Q_1)$ and has a defining universal property: for any morphism $f:Y\rightarrow X$ and for any choice $W\unddot$ of a full flag of quotient bundles of $f^*V$, there exists a unique morphism $\tilde{f}$ such that $f=\pi\widetilde{f}$ and $\widetilde{f}^*Q\unddot=W\unddot\,$. Note in particular that this yields a section $i_{W\unddot}:=\widetilde{id_V}:X\rightarrow \flag V$ for every full flag of quotient bundles of $V$.
 
It is also possible to consider partial flag bundles and in our treatment we will specifically need the ones  parametrizing flags in which only the $i$-th level is missing: we will denote them by $\flag_{\widehat{i}}V$. It is easy to see that $\flag V$ can be recovered  from each of these bundles $\flag_{\widehat{i}}V$'s  as a $\mathbb{P}^1$-bundle and we will write $\varphi_i$ for the corresponding projection.
    
Schubert varieties $\Omega_{\omega}$ are defined as degeneracy loci of $\flag V$ and their definition depends on the choice of  a full flag of subbundles $V\unddot=(V_1\subset\tred\subset V_n=V)$. More precisely one sets
$$\Omega_\omega:=\Omega_{\textbf{r}_\omega}(\pi^* V\unddot,Q\unddot,id_{\pi^* V})\ .$$ 

\begin{remark}\label{remark pullback}
Each degeneracy locus of the kind $\Omega_{\textbf{r}_\omega}(U\unddot,W\unddot,id_V)$ can be obtained as a pull-back along $i_{W\unddot}$ of the Schubert variety $\Omega_\omega$ arising from $U\unddot$. 
\end{remark}

It is important to notice that although in general Schubert varieties are not neither smooth nor l.c.i. schemes, the Schubert variety $\Omega_{\omega_0}$, being isomorphic to the base scheme $X$, is always smooth. This observation is central in the definition of Bott-Samelson resolutions, a family of smooth schemes over $\flag V$, which in some sense represents the missing  link between Schubert polynomials and Schubert varieties. If on the one hand, as we will see in the next section, double Schubert polynomials naturally describe the push-forwards of the fundamental classes of Bott-Samelson resolutions, on the other hand each Schubert variety is birationally isomorphic to at least one element of the family.     

 Exactly as for Schubert polynomials, the definition of Bott-Samelson resolutions $\mathcal{R}_I\stackrel{r_I}\rightarrow\flag V$ is given recursively, even though in this case the indexing set is represented by tuples of indices $i_j\in\{1,\tred, n-1\}$. For $I=\emptyset$ one sets $R_\emptyset:=\Omega_{\omega_0}$ and $r_\emptyset:=i_{V/V_{\textbf{\textbullet}}}$. In all other cases it is possible to write $I=(I',j)$, to consider the following fibre diagram
\begin{eqnarray}
\xymatrix{
  R_{I'}\times_{\flag_{\hat{j}}V} \flag(V) \ar[rr]^{pr_2} \ar[d]_{pr_1}& &\flag(V)\ar[d]^{\varphi_j} \\
  R_{I'}\ar[r]^{r_{I'}}& \flag(V) \ar[r]^{\varphi_j} &\flag_{\widehat{j}}V    }
\end{eqnarray}
and set $R_I:=R_{I'}\times_{\flag_{\hat{j}}V} \flag(V)$ and $r_I:=pr_2$. The relationship existing between Bott-Samelson resolutions and Schubert varieties is made explicit by the following results.

\begin{proposition}\label{prop resolution}

Let $I=(i_1,\tred, i_l)$ be a minimal decomposition and set $\omega=\omega_0 s_I$. Then 

1) $r_I(R_I)=\Omega_{\omega}$ and  the resulting map $R_I\rightarrow \Omega_{\omega}$ is a projective birational morphism. In particular if $X\in \SM$, then $R_I$ is a resolution of singularities of $\Omega_{\omega}$;

2) i) $r_{I*}\mathcal{O}_{R_I}=\mathcal{O}_{\Omega_\omega}$  as coherent sheaves and therefore $\Omega_\omega$ is a normal scheme;

\ \ \ ii) $R^q f_*\mathcal{O}_{R_I}=0$ for q>0, hence $\Omega_\omega$ has at worst rational singularities. 
\end{proposition}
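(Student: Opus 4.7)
The plan is to proceed by induction on $\ell = |I|$. The base case $\ell = 0$ is immediate: $R_\emptyset = \Omega_{\omega_0}$ is embedded via the closed immersion $r_\emptyset = i_{V/V_\bullet}$, identifying it with $X$, and both claims are trivial.

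For the inductive step, write $I = (I', j)$ with $j = i_\ell$. Minimality of $I$ forces $I'$ to be a minimal decomposition of length $\ell - 1$. Set $\omega' := \omega_0 s_{I'}$ and $\omega := \omega_0 s_I = \omega' s_j$, so that $l(\omega) = l(\omega_0) - \ell$ and $l(\omega' s_j) = l(\omega) < l(\omega')$ (``$s_j$ is a right descent of $\omega'$''). By the inductive hypothesis, $r_{I'}\colon R_{I'} \to \Omega_{\omega'}$ is a projective birational morphism with $R(r_{I'})_* \mathcal{O}_{R_{I'}} = \mathcal{O}_{\Omega_{\omega'}}$, and $\Omega_{\omega'}$ is normal with rational singularities. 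The decisive geometric input is a classical fact about Schubert varieties under the projection $\varphi_j$, established by reduction to the fiberwise absolute case: writing $\Omega'' := \varphi_j(\Omega_{\omega'})$, the descent condition implies that $\varphi_j|_{\Omega_{\omega'}}\colon \Omega_{\omega'} \xrightarrow{\sim} \Omega''$ is an isomorphism, whereas the complementary ascent condition $l(\omega s_j) > l(\omega)$ implies that $\varphi_j^{-1}(\Omega'') = \Omega_\omega$ is a $\mathbb{P}^1$-bundle over $\Omega''$.

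With this in hand, part (1) is immediate: the image of $r_I$ equals $\varphi_j^{-1}(\varphi_j(r_{I'}(R_{I'}))) = \varphi_j^{-1}(\Omega'') = \Omega_\omega$; projectivity descends from $r_{I'}$ via base change; and birationality follows because, over the open iso-locus of $r_{I'}$, the fiber square identifies $R_I \to \Omega_\omega$ with the $\mathbb{P}^1$-bundle $\Omega_\omega \to \Omega''$ pulled back through the isomorphism $\Omega_{\omega'} \xrightarrow{\sim} \Omega''$. Part (2) follows from flat base change applied to the defining Cartesian square of $R_I$: since $\varphi_j$ is smooth,
\[
R r_{I*}\mathcal{O}_{R_I} \;\cong\; \varphi_j^{*}\, R(\varphi_j \circ r_{I'})_* \mathcal{O}_{R_{I'}} \;=\; \varphi_j^{*}\, R\varphi_{j*}\mathcal{O}_{\Omega_{\omega'}} \;=\; \varphi_j^{*}\, \mathcal{O}_{\Omega''} \;=\; \mathcal{O}_{\Omega_{\omega}},
\]
where the second equality uses the Leray spectral sequence combined with the inductive hypothesis, the third uses that $\varphi_j|_{\Omega_{\omega'}}$ is an isomorphism onto $\Omega''$, and the last uses the scheme-theoretic identity $\varphi_j^{-1}(\Omega'') = \Omega_\omega$ (valid since $\varphi_j$ is flat and both sides are reduced). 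This proves both (2)(i) and (2)(ii); normality of $\Omega_\omega$ then follows by Stein factorization of the proper birational map $r_I$ from the smooth scheme $R_I$, and rational singularities follow from the vanishing of the higher direct images together with smoothness of $R_I$.

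The main obstacle is the Schubert-theoretic input concerning the behaviour of $\varphi_j$ restricted to $\Omega_{\omega'}$ and $\Omega_\omega$. In the absolute setting this is a classical consequence of the Bruhat decomposition and the parabolic structure; in the relative setting it follows by combining the fiberwise description of Schubert varieties (as loci cut out by rank conditions on the tautological quotient bundles) with the smoothness of $\flag V \to X$, but presenting a clean self-contained argument requires first isolating a relative analogue of the notion of Schubert variety in $\flag_{\widehat j} V$.
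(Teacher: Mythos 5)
The paper does not actually prove this proposition: part (1) is quoted from Fulton's appendix on flag varieties and part (2) from Ramanathan's theorem that Schubert varieties are normal with rational singularities, so you are attempting to reprove a genuinely deep result. Your inductive skeleton is the right one for part (1), but the argument hinges on the assertion that ``the descent condition implies that $\varphi_j|_{\Omega_{\omega'}}\colon\Omega_{\omega'}\xrightarrow{\ \sim\ }\Omega''$ is an isomorphism,'' and this is false. That restriction is only a proper \emph{birational} morphism onto its image: it contracts every sub-Schubert variety of $\Omega_{\omega'}$ that happens to be a union of fibres of $\varphi_j$ (already in $\flag k^3$ one finds a two-dimensional Schubert variety mapping onto $\mathbb{P}^2$ by contracting a $\mathbb{P}^1$). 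For part (1) this slip is harmless, since birationality of $r_I\colon R_I\to\Omega_\omega$ only requires $\varphi_j\circ r_{I'}\colon R_{I'}\to\Omega''$ to be generically an isomorphism, which follows from the birationality of its two factors; after that repair, part (1) is essentially Fulton's argument.

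For part (2), however, the false isomorphism carries the entire weight of the proof. Your chain of identifications needs $R(\varphi_j|_{\Omega_{\omega'}})_*\mathcal{O}_{\Omega_{\omega'}}=\mathcal{O}_{\Omega''}$. For a proper birational contraction this is equivalent (granting the inductive hypothesis on $\Omega_{\omega'}$) to the normality and rationality of singularities of $\Omega''$ --- that is, to the statement being proved, now for Schubert varieties of the partial flag bundle $\flag_{\widehat{j}}V$, which your induction does not carry along. This is not a bookkeeping issue: the vanishing $R^q(\varphi_j|_{\Omega_{\omega'}})_*\mathcal{O}=0$ for $q>0$ is precisely the hard content of Ramanathan's theorem, proved there via Frobenius splitting and Kempf-type vanishing (or, in other treatments, by a strengthened induction that includes all partial flag bundles together with normality); Grauert--Riemenschneider is of no help since it concerns $\omega_{R_{I'}}$ rather than $\mathcal{O}_{R_{I'}}$. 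Finally, the concluding remark about Stein factorization is circular: for a proper birational map from a normal (e.g.\ smooth) source, $r_{I*}\mathcal{O}_{R_I}$ \emph{is} the normalization of $\mathcal{O}_{\Omega_\omega}$, so the identity $r_{I*}\mathcal{O}_{R_I}=\mathcal{O}_{\Omega_\omega}$ is equivalent to, not a consequence to be followed by, the normality of $\Omega_\omega$. In short: part (1) stands once ``isomorphism'' is weakened to ``birational''; part (2) as written assumes what it sets out to prove.
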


\begin{proof}
For part (1) see \cite[Appendix C]{SchubertFulton}. For part (2) see \cite[Theorem 4]{SchubertRamanathan}.
\end{proof}
\subsection{Thom-Porteous formulas for $CH^*$ and $K^0$}
Is it worth stressing that every result in this section has an exact counterpart for the case of a general oriented cohomology theory, from which it can be derived. Our exposition of the proof of the formula will emphasise the role played by Bott-Samelson resolutions because we believe this allows to gain a better understanding of the situation in the case of a general oriented cohomology theory. The first step towards the formula consists in understanding how to express the fundamental class of a Schubert variety $\Omega_{\omega}\subseteq \flag V$ in both $CH^*(\flag V\,)$ and $K^0(\flag V\,)$ and for this it is necessary to have an explicit description of these two rings.

\begin{lemma}\label{prop flag CH}
Let $J_{CH}$ and $J_{K^0}$ be the ideals of $CH^*(X)[\textbf{X}]$ and $K^0(X)[\textbf{X}]$ generated by the elements $e_i(\textbf{X})-c_i(V)$, where, for $1\leq i\leq n$, $e_i(\textbf{X})$ is the $i$-th elementary symmetric function and $c_i(V)$ is the $i$-th Chern class of $V$. Then one has 
$$\ CH^*(\flag V\,)\simeq CH^*(X)[\textbf{X}]/J_{CH}\quad, \quad K^0(\flag V\,)\simeq K^0(X)[\textbf{X}]/J_{K^0}\ ,$$
where the isomorphisms map the $X_i$'s to the Chern roots of $\pi^*V$ associated to $Q\unddot$. 
\end{lemma}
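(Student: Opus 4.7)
The strategy is to present $\flag V$ as an iterated projective bundle over $X$ and to invoke the projective bundle formula at each stage; this formula is part of the axiomatics of an OCT and is in particular available for $CH^*$ and $K^0$. I would start by describing the tower
$$\flag V = F_1 \stackrel{p_1}\rightarrow F_2 \stackrel{p_2}\rightarrow \trecd \stackrel{p_{n-1}}\rightarrow F_n = X\ ,$$
where $F_i$ denotes the partial flag bundle parametrising quotient flags $V \srarrow Q_{n-1} \srarrow \trecd \srarrow Q_i$, and each transition map $p_{i-1}: F_{i-1} \to F_i$ is the projective bundle $\Proj(K_i)$ associated to the universal kernel $K_i = \textit{Ker}(V \srarrow Q_i)$. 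The tautological line bundle of this projective bundle is canonically identified with $L_i := \textit{Ker}(Q_i \srarrow Q_{i-1})$, whose first Chern class is exactly the element that the statement attaches to $X_i$.

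At each level, the projective bundle formula yields a presentation
$$A^*(F_{i-1}) \simeq A^*(F_i)[X_i]/(P_i(X_i))\ ,$$
where $P_i$ is the Chern polynomial of $K_i$ (in the $K^0$ case interpreted via the multiplicative formal group law), and $\{1, X_i, X_i^2, \trecd, X_i^{{\rm rk}(K_i)-1}\}$ is a free $A^*(F_i)$-module basis of $A^*(F_{i-1})$. Iterating down to $F_n = X$ produces a surjection $\alpha: A^*(X)[X_1, \trecd, X_n] \srarrow A^*(\flag V)$ sending $X_i$ to $c_1(L_i)$, and simultaneously identifies $A^*(\flag V)$ as a free $A^*(X)$-module of rank $n!$ whose basis is given by the ``sub-staircase'' monomials $X_1^{a_1}\trecd X_n^{a_n}$, $0\leq a_i < i$.

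To see that the kernel of $\alpha$ is precisely $J$, I would apply the Whitney formula to the filtration of $\pi^*V$ by $L_1, \trecd, L_n$ provided by the flag $Q\unddot$; this gives
$$c_t(\pi^* V) = \prod_{i=1}^n (1 + X_i t)\ ,$$
whence $c_i(V) = e_i(\textbf{X})$ in $A^*(\flag V)$. Hence $J \subseteq \textit{Ker}(\alpha)$ and we obtain an induced surjection $\bar\alpha: A^*(X)[\textbf{X}]/J \srarrow A^*(\flag V)$. To upgrade it to an isomorphism, I would invoke the classical fact that $\ZZ[\textbf{X}]$ is free over $\ZZ[e_1(\textbf{X}), \trecd, e_n(\textbf{X})]$ of rank $n!$ with the same sub-staircase basis; since the map $\ZZ[e_1,\trecd,e_n] \to A^*(X)$ sending $e_i \mapsto c_i(V)$ is a base change, $A^*(X)[\textbf{X}]/J$ is also free of rank $n!$ over $A^*(X)$ with the same basis, and $\bar\alpha$ sends one basis onto the other, hence is an isomorphism.

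The main technical point is the last comparison: one must verify that the iterated projective-bundle relations $P_i(X_i) = 0$ generate, all together, the same $A^*(X)$-ideal of $A^*(X)[\textbf{X}]$ as the symmetric-function relations $e_i(\textbf{X}) - c_i(V) = 0$. This is handled by an inductive bookkeeping in which, modulo the relations obtained at lower levels, the leading term of $P_i(X_i)$ is shown to match — up to sign and the appropriate formal-group correction in the $K^0$ case — the corresponding elementary symmetric relation. Equivalently, one can bypass this by checking that $\bar\alpha$ sends the sub-staircase basis of the source to a free $A^*(X)$-basis of the target, which it does by the iterated projective-bundle formula.
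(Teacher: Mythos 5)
Your proposal is correct and follows essentially the route the paper itself takes: the lemma is stated as background without proof, but the paper's proof of its cobordism analogue (Proposition 4.1, quoting Hornbostel--Kiritchenko) is exactly your argument --- realise $\flag V$ as an iterated projective bundle, apply the projective bundle formula at each stage, and then convert the resulting presentation into the symmetric-function form, the freeness of $\ZZ[\textbf{X}]$ over $\ZZ[e_1,\tred,e_n]$ with the sub-staircase basis doing the final comparison. One slip to fix: the fibre of $p_{i-1}:F_{i-1}\rightarrow F_i$ parametrises rank-$(i-1)$ quotients of $Q_i$, i.e.\ line subbundles of $Q_i$, so the relevant projective bundle is $\Proj(Q_i)$ (a $\Proj^{i-1}$-bundle), not $\Proj(K_i)$ with $K_i=\textit{Ker}(V\srarrow Q_i)$, which has the wrong rank $n-i$; everything downstream in your write-up (the tautological line $\textit{Ker}(Q_i\srarrow Q_{i-1})$, the exponent bound $0\leq a_i<i$, the total rank $n!$) is consistent with $\Proj(Q_i)$, so this is a mislabeling rather than a gap in the argument.
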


 
At this point one needs to establish a relation between double Schubert polynomials and Bott-Samelson resolutions. If we denote by  $\mathcal{R}_I^{CH}$  the push-forward along $r_I$ of the fundamental class of a given resolution $R_I\stackrel{r_I}\rightarrow\flag V$, it immediately follows from the definition that $\mathcal{R}^{CH}_I=\varphi_{j*}\varphi^*_j \mathcal{R}^{CH}_{(I,j)}$ and the same holds as well for  $\mathcal{R}^{K^0}_I$. As a consequence, in order to be able to express these classes it is sufficient to have an explicit description of the operators $\varphi_{j*}\varphi_j^*$ and of the initial class $\mathcal{R}_\emptyset$.

\begin{lemma}
Denote by $\overline{\partial_i}$, $\overline{\pi_i}$ the operators defined by $\partial_i$, $\pi_i$ on $CH^*(\flag V)$ and $K^0(\flag V)$ respectively. Then one has $\overline{\partial_i}=\varphi_{i*}\varphi_i^*$ and $\overline{\pi_i}=\varphi_{i*}\varphi_i^*$.
\end{lemma}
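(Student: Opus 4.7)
The identity is a classical instance of the fact that pullback--pushforward along the $\mathbb{P}^1$-bundle $\varphi_i$ realizes the Demazure-type divided difference. First, I would record the geometric setup: $\varphi_i\colon\flag V\to\flag_{\widehat i}V$ is the projective bundle $\mathbb{P}(W_i)\to\flag_{\widehat i}V$, where $W_i:=\textit{Ker}(Q_{i+1}\srarrow Q_{i-1})$ is of rank $2$ and the tautological line sub-bundle is $L_i:=\textit{Ker}(Q_i\srarrow Q_{i-1})$. Under the presentation of the preceding lemma the Chern roots $x_i,x_{i+1}$ that $\sigma_i$ swaps are the Chern roots of $\varphi_i^*W_i$, so their elementary symmetric functions $x_i+x_{i+1}=c_1(W_i)$ and $x_ix_{i+1}=c_2(W_i)$ are pulled back from $\flag_{\widehat i}V$.

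Next, I would apply the projective bundle formula for $\varphi_i$ to present $A^*(\flag V)$ (with $A^*$ either $CH^*$ or $K^0$) as a free module of rank $2$ over $A^*(\flag_{\widehat i}V)$, using the basis $\{1,x_i\}$ in the Chow case and $\{1,[L_i]\}$ in the $K^0$ case. Both sides of the claimed identity are $A^*(\flag_{\widehat i}V)$-linear: the composite $\varphi_{i*}\varphi_i^*$ by the projection formula $\varphi_{i*}(\varphi_i^*\alpha\cdot g)=\alpha\cdot\varphi_{i*}(g)$, and $\overline{\partial_i},\overline{\pi_i}$ because $\sigma_i$ fixes every class pulled back along $\varphi_i$. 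It therefore suffices to check the equalities on the two basis elements.

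The verification is then a direct computation. For $CH^*$, the standard $\mathbb{P}^1$-bundle pushforward gives $\varphi_{i*}(1)=0$ and $\varphi_{i*}(x_i)=1$, matching $\partial_i(1)=0$ and $\partial_i(x_i)=(x_i-x_{i+1})/(x_i-x_{i+1})=1$. For $K^0$, identifying $L_i$ with the appropriate Serre twisting sheaf on $\mathbb{P}(W_i)$ yields $\varphi_{i*}(1)$ and $\varphi_{i*}([L_i])$ from $R\varphi_{i*}\mathcal{O}_{\flag V}$ and $R\varphi_{i*}L_i$; after translating through the dictionary $[L_i^\vee]=1-x_i$, the formula $\pi_i P=((1-x_{i+1})P-(1-x_i)\sigma_i P)/(x_i-x_{i+1})$ evaluated on the basis returns the same values as $\varphi_{i*}\varphi_i^*$.

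The principal technical subtlety is in the $K^0$ case, where one must consistently fix the convention identifying the tautological line bundle on $\mathbb{P}(W_i)$ with $L_i$ and translate between the Chern-class variable $x_i$ and the line-bundle class $[L_i]$ using $[L_i^\vee]=1-x_i$. Once this dictionary is settled, the identity reduces, exactly as in the Chow case, to a short explicit computation on a $\mathbb{P}^1$-bundle, and no deeper input is required.
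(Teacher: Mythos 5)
Your proof is correct, and it supplies a proof that the paper itself does not: this lemma appears in the review section on the classical cases $CH^*$ and $K^0$, where the paper merely records it as known background (the references are to Fulton \cite{FlagsFulton} and Fulton--Lascoux \cite{PieriFulton}) without spelling out the verification. Your approach — present $A^*(\flag V)$ as a free rank-$2$ module over $A^*(\flag_{\widehat i}V)$, note that both sides are $A^*(\flag_{\widehat i}V)$-linear (projection formula on the geometric side, $\sigma_i$-invariance of pulled-back classes on the algebraic side), and then check the two identities on the basis $\{1,x_i\}$ — is the standard direct argument and is exactly the special case of what the paper later invokes for $\Omega^*$ in Proposition~\ref{prop operators}, where the $\mathbb{P}^1$-bundle pushforward is read off from Vishik's general projective-bundle formula \cite[Theorem 5.30]{SymmetricVishik}. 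Doing it explicitly here is cleaner and more self-contained; citing Vishik buys uniformity across all oriented theories.

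One small point worth flagging: the lemma as printed writes the composite as $\varphi_{i*}\varphi_i^*$, but for it to be an endomorphism of $A^*(\flag V)$ comparable with $\overline{\partial_i}$ and $\overline{\pi_i}$ the order must be $\varphi_i^*\varphi_{i*}$, which is indeed what the paper uses everywhere else (e.g.\ in $\mathcal{R}_{(I,j)}=\varphi_j^*\varphi_{j*}\mathcal{R}_I$ and in Proposition~\ref{prop operators}). You reproduced the misordering in your plan, but your actual module-linearity argument and basis computation are unambiguously for $\varphi_i^*\varphi_{i*}$, so the content is right; just be careful to state the composite in the correct order when writing it up.
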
    

\begin{lemma} Denote by $M_i$ and $L_i$ the line bundles $L_i^{Q\unddot}$ and $L_i^{\pi^*V\unddot}$. Then
$$i)\ \mathcal{R}_\emptyset^{CH}=\mathfrak{S}_{\omega_0}(c_1(M_k),c_1 (L_j)) \ \ ;\ \  \
ii)\ \mathcal{R}_\emptyset^{K^0}=\mathfrak{G}_{\omega_0}(c_1 (M_k),c_1 (L_j^\vee))\ .$$
\end{lemma}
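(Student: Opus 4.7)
The map $r_\emptyset=i_{V/V\unddot}$ is a section of the smooth structural morphism $\pi$, hence a regular closed embedding of codimension $\binom{n}{2}$ whose image is $\Omega_{\omega_0}$; thus $\mathcal{R}_\emptyset^A=(r_\emptyset)_*[X]_A=[\Omega_{\omega_0}]_A$ in any OCT $A^*$. The plan is to realise $\Omega_{\omega_0}$ as an iterated zero locus of regular sections of bundles whose ranks sum to $\binom{n}{2}$, so that Lemma \ref{lem top Chern} can be invoked at each stage. I will carry out the computation in $\Omega^*$ and then specialise along the canonical morphisms $\vartheta_{CH}$ and $\vartheta_{K^0}$.

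For $0\leq j\leq n-1$ let $Z_j\subset\flag V$ denote the closed subscheme cut out by the conditions $K_k=\pi^*V_{n-k}$ for $1\leq k\leq j$, so that $Z_0=\flag V$ and $Z_{n-1}=\Omega_{\omega_0}$. Once the first $j$ kernels are forced, the remaining data $K_{n-1}\subset \trecd\subset K_{j+1}$ is an arbitrary full flag inside $K_j=\pi^*V_{n-j}$; this identifies $Z_j$ with the flag bundle $\flag(V_{n-j})$, which is in particular smooth over $X$. On $Z_{j-1}$ the chain $\pi^*V_{n-j}\subset \pi^*V_{n-j+1}=K_{j-1}$ forces the composite $\pi^*V_{n-j}\to \pi^*V\to Q_j$ to land inside $K_{j-1}/K_j=M_j$, producing a section $s_j$ of $(\pi^*V_{n-j})^\vee\otimes M_j$ restricted to $Z_{j-1}$; by construction its zero scheme equals $Z_j$, of codimension $n-j$ in $Z_{j-1}$, matching the rank of the bundle, so $s_j$ is a regular section.

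Applying Lemma \ref{lem top Chern} on $Z_{j-1}$ together with the projection formula for the closed embedding $Z_{j-1}\hookrightarrow \flag V$ yields $[Z_j]_\Omega=c_{n-j}((\pi^*V_{n-j})^\vee\otimes M_j)\cdot [Z_{j-1}]_\Omega$ in $\Omega^*(\flag V)$, so by induction $\mathcal{R}_\emptyset^\Omega=\prod_{k=1}^{n-1} c_{n-k}((\pi^*V_{n-k})^\vee\otimes M_k)$. Since $V_{n-k}$ inherits from $V\unddot$ a full flag with Chern roots $y_1,\tred,y_{n-k}$ and $M_k$ has Chern root $x_k$, Lemma \ref{cor Chern} expands the $k$-th factor as $\prod_{l=1}^{n-k}F_\Omega(x_k,\chi_\Omega(y_l))$, whence $\mathcal{R}_\emptyset^\Omega=\prod_{k+l\leq n}F_\Omega(x_k,\chi_\Omega(y_l))$. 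Applying $\vartheta_{CH}$, under which $F_a(u,v)=u+v$ and $\chi_a(u)=-u$, recovers $\prod_{k+l\leq n}(x_k-y_l)=\mathfrak{S}_{\omega_0}(c_1(M_k),c_1(L_l))$, while applying $\vartheta_{K^0}$, under which $F_m(u,v)=u+v-uv$ and $\chi_{K^0}(y_l)=c_1(L_l^\vee)$, recovers $\prod_{k+l\leq n}(c_1(M_k)+c_1(L_l^\vee)-c_1(M_k)c_1(L_l^\vee))=\mathfrak{G}_{\omega_0}(c_1(M_k),c_1(L_l^\vee))$. The main technical point is the identification $Z_j\simeq \flag(V_{n-j})$, which both guarantees that the intermediate loci are smooth and supplies the rank-codimension matching needed for each $s_j$ to be a regular section.
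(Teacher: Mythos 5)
Your argument is correct, and it reaches the product formula $\prod_{k+l\leq n}F(x_k,\chi(y_l))$ by a genuinely different geometric route than the paper. The paper treats this lemma as the specialisation to $CH^*$ and $K^0$ of Proposition \ref{prop initial class}, whose proof follows Fulton \cite{FlagsFulton}: it realises $\Omega_{\omega_0}$ \emph{in one step} as the zero scheme of a single section of the rank-$\binom{n}{2}$ bundle $K=\mathrm{Ker}\,\psi$ built from the two direct sums of $\mathrm{Hom}$ bundles, and then extracts $c_N(K)$ from the Whitney formula by cancelling common factors between the Chern polynomials of the source and target of $\psi$. You instead build a tower $\flag V=Z_0\supset Z_1\supset\cdots\supset Z_{n-1}=\Omega_{\omega_0}$ of smooth intermediate loci $Z_j\simeq\flag(V_{n-j})$ and peel off one top Chern class $c_{n-j}((\pi^*V_{n-j})^\vee\otimes M_j)$ at each stage via Lemma \ref{lem top Chern} and the projection formula --- a Kempf--Laksov-style iterated degeneracy argument. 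What your version buys is the elimination of the kernel bundle and of the Whitney-formula bookkeeping: each factor of the final product appears directly as the top Chern class of a genuine $\mathrm{Hom}$ bundle of line-bundle--twisted type, and Lemma \ref{cor Chern} applies verbatim. The price is that you must justify smoothness and the codimension count at every intermediate stage, which you do correctly via the identification $Z_j\simeq\flag(V_{n-j})$; in the paper's approach only the single zero scheme $\Omega_{\omega_0}\simeq X$ needs to be checked regular. Both arguments then specialise identically along $\vartheta_{CH}$ and $\vartheta_{K^0}$ (with the usual normalisation $\beta=1$ for ungraded $K^0$, consistent with $\mathfrak{G}_{\omega_0}=\mathfrak{H}^{(-1)}_{\omega_0}$), so your conclusion matches the statement.
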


In fact these two lemmas together yield the following
\begin{corollary}
Let $I$ be a minimal decomposition and set $\omega=\omega_0s_I$. Then
$$i)\ \mathcal{R}_I^{CH}=\partial_{s_I}\mathfrak{S}_{\omega_0}(c_1(M_k),c_1(L_j))=\mathfrak{S}_{\omega}(c_1(M_k),c_1(L_j))\ ;$$
$$ii)\ \mathcal{R}_I^{K^0}=\pi_{s_I}\mathfrak{G}_{\omega_0}(c_1(M_k),c_1(L_j))
=\mathfrak{G}_{\omega}(c_1(M_k),c_1(L_j^\vee))\ ;$$
where $\partial_{s_I}$ and $\pi_{s_I}$ represent the obvious composition of divided difference operators associated to $I$.
\end{corollary}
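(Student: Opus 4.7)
My plan is to prove $(i)$ and $(ii)$ simultaneously by induction on $l$, the length of the minimal decomposition $I=(i_1,\ldots,i_l)$, since the two cases are formally identical (replacing $\partial_i\leftrightarrow\pi_i$ and $\mathfrak{S}\leftrightarrow\mathfrak{G}$, together with the appropriate dualization of arguments in the Grothendieck case). The base case $l=0$ is immediate: $I=\emptyset$, $\omega=\omega_0$, no operators are applied, and the claim reduces to the preceding lemma describing $\mathcal{R}^{CH}_\emptyset$ and $\mathcal{R}^{K^0}_\emptyset$ as the initial polynomials in the prescribed Chern roots.

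For the inductive step I would write $I=(I',i_l)$ with $I'=(i_1,\ldots,i_{l-1})$. Since $I$ is minimal, so is $I'$, and $l(\omega_0 s_I)=l(\omega_0 s_{I'})-1$. From the fibre square defining $R_I$, the projection $pr_1:R_I\to R_{I'}$ is the pullback of the flat $\mathbb{P}^1$-bundle $\varphi_{i_l}$, so $pr_1^*[R_{I'}]=[R_I]$ by compatibility of fundamental classes with smooth pullbacks, and flat base change gives
$$\mathcal{R}_I = pr_{2*}[R_I] = pr_{2*}\,pr_1^*[R_{I'}] = \varphi_{i_l}^*\varphi_{i_l *}\,r_{I'*}[R_{I'}] = \varphi_{i_l}^*\varphi_{i_l *}\,\mathcal{R}_{I'},$$
which by the other preceding lemma equals $\overline{\partial_{i_l}}\mathcal{R}_{I'}$ in the Chow case (respectively $\overline{\pi_{i_l}}\mathcal{R}_{I'}$ for $K^0$). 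Invoking the induction hypothesis $\mathcal{R}_{I'}=\mathfrak{S}_{\omega_0 s_{I'}}$ and the defining recursion $\mathfrak{S}_{\omega'}=\partial_i\mathfrak{S}_{\omega' s_i}$ (valid precisely because $l(\omega_0 s_{I'} s_{i_l})<l(\omega_0 s_{I'})$), I conclude $\mathcal{R}_I=\mathfrak{S}_\omega$ as claimed. The intermediate identity $\partial_{s_I}\mathfrak{S}_{\omega_0}=\mathfrak{S}_\omega$ then follows by unrolling the induction: the iterated operator $\partial_{i_l}\circ\cdots\circ\partial_{i_1}$ depends only on the permutation $s_I$ thanks to the braid relations satisfied by the $\partial_i$ on $CH^*$ and by the $\pi_i$ on $K^0$.

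The proof is essentially formal once the two preceding lemmas are granted; the only technical input beyond them is the flat base change formula, which requires both the flatness of the $\mathbb{P}^1$-bundle $\varphi_{i_l}$ (immediate) and the smooth-pullback compatibility of fundamental classes in $CH^*$ and $K^0$ (standard). The real substantive ingredient is that the minimality of $I$ enters twice: once to ensure each $\partial_{i_k}$ acts on a Schubert polynomial whose indexing permutation has length strictly greater than that of its image (so the recursion applies at every step), and once through the braid relations to justify packaging the iterated operator as $\partial_{s_I}$ independently of the chosen reduced word. Neither ingredient survives to general oriented cohomology theories, which is precisely the difficulty the remainder of the paper will need to address.
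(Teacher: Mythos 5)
Your proof is correct and is exactly the argument the paper intends: the corollary is stated there as an immediate consequence of the two preceding lemmas together with the recursion $\mathcal{R}_{(I',j)}=\varphi_j^*\varphi_{j*}\mathcal{R}_{I'}$ coming from the fibre square, and your induction simply spells this out, including the two places where minimality of $I$ is needed. No gaps.
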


Hence double Schubert and Grothedieck polynomials naturally describe the classes $\mathcal{R}_I$. 

\begin{remark}
It is worth pointing out that the lack of symmetry between the two statements, namely the presence of an extra dual in the formula for $K^0$, is only due to the fact that double Schubert polynomials already incorporate the dual in their definition. 
\end{remark}

 In order to complete the treatment of the universal case we only need to use proposition \ref{prop resolution} to relate these quantities to the fundamental classes of Schubert varieties.

\begin{proposition}[\protect{\cite[Proposition 7.5]{FlagsFulton}}, \protect{\cite[Theorem 3]{PieriFulton}}]  \label{th flagbundle}

Let $V\rightarrow X$ be a vector bundle and let $\omega\in S_n$. Denote by $M_i$ and $L_i$ the line bundles $L_i^{Q\unddot}$ and $L_i^{\pi^*V\unddot}$.
 In $CH^*(\flag(V))$ and in $K^0(\flag V)$ one respectively has
$$i)\ [\Omega_\omega]_{CH^*}=\mathfrak{S}_\omega(c_1(M_i),c_1 (L_j))\ ;
\ \  ii) \ \ [\mathcal{O}_{\Omega_\omega}]_{K^0}=\mathfrak{G}_\omega(c_1(M_i),c_1 (L_j^\vee))\ .$$ 
\end{proposition}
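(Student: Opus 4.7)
The plan is to combine the preceding corollary, which computes the Bott-Samelson push-forward classes $\mathcal{R}_I^{CH}$ and $\mathcal{R}_I^{K^0}$ explicitly as $\mathfrak{S}_\omega$ and $\mathfrak{G}_\omega$ evaluated on the appropriate Chern roots, with the resolution-of-singularities properties collected in Proposition \ref{prop resolution}, in order to identify these push-forward classes with the fundamental classes of Schubert varieties.

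Given $\omega\in S_n$, I would first fix a minimal decomposition $I=(i_1,\ldots,i_l)$ of $\omega_0\omega$, so that $\omega=\omega_0 s_I$ with $l=l(\omega_0)-l(\omega)$. By Proposition \ref{prop resolution}(1), the map $r_I:R_I\rightarrow\flag V$ factors through $\Omega_\omega$, the induced morphism $r_I:R_I\rightarrow\Omega_\omega$ is projective and birational, $R_I$ is smooth, and consequently $R_I$ and $\Omega_\omega$ have the same dimension.

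For part (i), I would invoke the standard fact that any proper birational morphism $f:Y\to Z$ between irreducible schemes of the same dimension satisfies $f_*[Y]_{CH_*}=[Z]_{CH_*}$. Applying this to $R_I\twoheadrightarrow\Omega_\omega$ and composing with the push-forward along the closed embedding $\Omega_\omega\hookrightarrow\flag V$ gives $\mathcal{R}_I^{CH}=[\Omega_\omega]_{CH^*}$. Comparing this with the corollary's identification $\mathcal{R}_I^{CH}=\mathfrak{S}_\omega(c_1(M_i),c_1(L_j))$ produces formula (i).

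For part (ii) the essential input is Proposition \ref{prop resolution}(2), which states that $\Omega_\omega$ has at worst rational singularities, so that $r_{I*}\mathcal{O}_{R_I}=\mathcal{O}_{\Omega_\omega}$ and $R^q r_{I*}\mathcal{O}_{R_I}=0$ for $q>0$. Working in $G_0$ and transferring via the duality isomorphism with $K^0$ on the smooth scheme $\flag V$, this yields
$$r_{I*}[\mathcal{O}_{R_I}]=\sum_{q\geq 0}(-1)^q[R^q r_{I*}\mathcal{O}_{R_I}]=[\mathcal{O}_{\Omega_\omega}]\,,$$
so $\mathcal{R}_I^{K^0}=[\mathcal{O}_{\Omega_\omega}]_{K^0}$. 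Combining with the corollary's description of $\mathcal{R}_I^{K^0}$ gives formula (ii).

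The main obstacle is essentially bookkeeping rather than a deep point: one must use a \emph{minimal} decomposition $I$ so that there is no length collapse when applying the iterated divided difference operators $\partial_{s_I}$ and $\pi_{s_I}$ (otherwise the corollary would produce $0$ instead of $\mathfrak{S}_\omega$ or $\mathfrak{G}_\omega$), and one must carefully track how the successive factorizations of $r_I$ through the fibre squares involving the partial flag bundles $\flag_{\widehat{j}}V$ turn the operators $\varphi_{j*}\varphi_j^*$ into applications of $\overline{\partial_j}$ or $\overline{\pi_j}$. Once birationality and rational singularities are granted by Proposition \ref{prop resolution}, the remainder of the argument is the standard compatibility of fundamental classes with proper push-forward in $CH^*$ and of structure sheaves in $K^0$.
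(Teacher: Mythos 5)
Your argument is correct and matches the route the paper itself sketches: the paper states this proposition by citation, but the surrounding text makes clear that it follows from the preceding corollary (identifying $\mathcal{R}_I^{CH}$ and $\mathcal{R}_I^{K^0}$ with the Schubert and Grothendieck polynomials) together with Proposition \ref{prop resolution}, which is exactly the combination you spell out. Your use of birationality for the Chow ring case and of rational singularities plus the vanishing of higher direct images for the $K^0$ case is precisely the intended mechanism for transferring the $\mathcal{R}_I$ formulas to the fundamental classes of $\Omega_\omega$.
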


 In view of remarks \ref{remark id} and \ref{remark pullback}, the universal case can then be used to give a description of the fundamental classes of degeneracy loci of morphisms of bundles with full flags. In order to express the statements of the theorems in the greatest generality, we will assume $X$ to be a Cohen-Macaulay scheme.

\begin{theorem}[\protect{\cite[Theorem 8.2]{FlagsFulton}},\protect{\cite[Theorem 2.1]{GrothendieckBuch}}] \label{thm Schubert}
Let $h:E\rightarrow F$ be a morphism of vector bundles of rank $n$ over a pure dimensional Cohen-Macaulay scheme $X$. Let $E_{\textbf{\textbullet}}$ and $F_{\textbf{\textbullet}}$ be full flags of $E$ and $F$ respectively.  Let $\omega\in S_n $ and assume that the degeneracy locus $\Omega_{r_\omega}(E\unddot,F\unddot,h)$ has codimension $l(\omega)$ in $X$. Denote by $M_i$ and $L_i$ the line bundles $L_i^{F\unddot}$ and $L_i^{E\unddot}$. Then in $CH_*(X)$ and $K^0(X)$ one has  
$$i)\ [\Omega_{r_\omega}(E\unddot,F\unddot,h)]_{CH_*}=
\mathfrak{S}_\omega(c_1(M_i),c_1(L_j))\ ;
\ \ ii)\ [\mathcal{O}_{\Omega_{r_\omega}(E\unddot,F\unddot,h)}]_{K^0}=
\mathfrak{G}_\omega(c_1(M_i),c_1(L^\vee_j))\ .$$
\end{theorem}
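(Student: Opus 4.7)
The plan is to reduce the theorem to the universal case on the flag bundle, which is already handled by Proposition \ref{th flagbundle}, and then to pull back via the classifying section. The passage from the universal formula to the statement for an arbitrary pure-dimensional Cohen--Macaulay base is what consumes most of the work.

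First I would use Remark \ref{remark id} to replace the morphism $h\colon E\to F$ by the identity on $E\oplus F$, equipped with the extended flags $E'\unddot$ and $F'\unddot$; one checks that the degeneracy locus and its defining equations are unchanged, so the codimension hypothesis is preserved. Thus I may assume from the start that $E=F=V$ and $h=\mathrm{id}_V$, at the cost of replacing the original full flags of $E$ and $F$ with a pair of full flags (one of subbundles $V\unddot$, one of quotients $V/V\unddot$) on a common bundle $V$.

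Next, by Remark \ref{remark pullback}, the degeneracy locus $\Omega_{\mathbf{r}_\omega}(V\unddot,V/V\unddot,\mathrm{id}_V)$ is obtained as the scheme-theoretic pull-back of the universal Schubert variety $\Omega_\omega\subset \flag V$ along the section $i:=i_{V/V\unddot}\colon X\to \flag V$ determined by the chosen flag of quotients. I would then apply Proposition \ref{th flagbundle} on $\flag V$, which expresses $[\Omega_\omega]$ (respectively $[\mathcal O_{\Omega_\omega}]$) as the value of the double Schubert (resp.\ Grothendieck) polynomial in the first Chern classes of the line bundles $L_i^{Q\unddot}$ and $L_i^{\pi^*V\unddot}$ on $\flag V$. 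Pulling back via $i^*$ and using the identities $i^*L_i^{Q\unddot}=L_i^{F\unddot}=M_i$ and $i^*L_i^{\pi^*V\unddot}=L_i^{E\unddot}=L_i$, together with the fact that $i^*$ commutes with first Chern classes, transforms the right-hand side into exactly the polynomial appearing in the statement.

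The main obstacle is identifying the left-hand side of the pulled-back formula with $[\Omega_{\mathbf{r}_\omega}(E\unddot,F\unddot,h)]$, rather than just with $i^*[\Omega_\omega]$. This is where the two hypotheses---$X$ Cohen--Macaulay and the codimension equal to the expected value $l(\omega)$---come into play. The section $i$ is a regular embedding, and $\Omega_\omega\subset \flag V$ has codimension $l(\omega)$; when the pullback scheme has the expected codimension on a Cohen--Macaulay base, $i$ meets $\Omega_\omega$ in the sense needed for $i^{!}[\Omega_\omega]=[i^{-1}\Omega_\omega]$ in $CH_*$, and similarly for $K^0$ there is no higher-$\mathrm{Tor}$ contribution so $i^*[\mathcal O_{\Omega_\omega}]=[\mathcal O_{i^{-1}\Omega_\omega}]$. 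This is the Cohen--Macaulay refinement of the Kempf--Laksov transversality principle; I would invoke it (referring to \cite{IntersectionFulton} for $CH_*$ and to \cite{GrothendieckBuch} for the $K^0$-theoretic vanishing) to conclude that $i^*$ of the universal fundamental class is the fundamental class of the pulled-back degeneracy locus. Combining this identification with the formula from Proposition \ref{th flagbundle} yields the two stated equalities simultaneously.
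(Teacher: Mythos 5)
Your proposal is correct and follows essentially the same route the paper indicates: the paper itself does not spell out a proof of this classical statement (it defers to Fulton and Buch), but its stated strategy --- reduce to $h=\mathrm{id}$ via Remark \ref{remark id}, realise the degeneracy locus as the pull-back of the universal Schubert variety along the classifying section via Remark \ref{remark pullback}, apply Proposition \ref{th flagbundle}, and use the Cohen--Macaulay and expected-codimension hypotheses to identify the pulled-back class with the fundamental class (resp.\ structure sheaf class) of the degeneracy locus --- is exactly yours, and matches the paper's own proof of the connective $K$-theory analogue (Theorem \ref{thm connSchubert}). You also correctly locate where the two hypotheses enter, namely in the proper-intersection/Tor-independence step.
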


The Thom-Porteous formula finally follows: it suffices to make use of the splitting principle to obtain full flags for both $E$ and $F$ and to observe that, thanks to remark \ref{remark locus}, one can reduce to the previous theorem. Since the polynomials in the formulas are symmetric in the two groups of Chern roots, it follows that the result does not depend on the choice of the flags.

\begin{corollary}[Thom-Porteous formula, \protect{\cite[Theorem 14.4(c)]{IntersectionFulton}}, \protect{\cite[Theorem 2.3]{GrothendieckBuch}}] \label{cor Thom}

Let $E\stackrel{h}\longrightarrow F$ be a morphism of vector bundles of rank $e$ and $f$ and fix $r$ with $0\leq r\leq min(e,f)$. Denote by $\textbf{t}$ the triple $(e,f,r)$. Assume that $codim(D_r(h),X)=(e-r)(f-r)$ and $X$ is a Cohen-Macaulay scheme. Then $D_r(h)$ is Cohen-Macaulay and in $CH^*(X)$ and $K^0(X)$ one respectively has
$$i)\ [D_r(h)]_{CH}=\mathfrak{D}^{CH}_{\textbf{t}}(c_i(F),-c_j(E))\ ;
\ \ ii)\ [\mathcal{O}_{D_r(h)}]_{K^0}=\mathfrak{D}^{K^0}_{\textbf{t}}(c_i(F),c_j(E^\vee))\ .$$
\end{corollary}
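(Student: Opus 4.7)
The plan is to reduce the Corollary to Theorem \ref{thm Schubert} using the splitting principle together with Remark \ref{remark locus}, and then invoke the symmetry lemma stated just above to rewrite the resulting Schubert (respectively Grothendieck) polynomial in Chern classes rather than Chern roots.

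First I would pass to the composite flag bundle $p \colon Y := \flag E \times_X \flag F \to X$, on which the pullbacks of $E$ and $F$ acquire universal full flags $E_\bullet$ and $F_\bullet$. Since $p^*$ is injective on $CH^*$ and $K^0$ by the projective bundle formula, and since $p$ is flat (so the codimension hypothesis is preserved under $p^{-1}$), it suffices to prove the identity on $Y$. Next, following Remark \ref{remark locus} on $Y$, I would augment $p^*E$ and $p^*F$ by trivial summands of ranks $f-r$ and $e-r$ (with flags extended by trivial line bundles, each contributing $c_1 = 0$) and extend $p^*h$ by zero to a morphism $h' \colon E' \to F'$. Then $p^{-1}D_r(h) = \Omega_{\mathbf{r}_{\nu_{\mathbf{t}}}}(E'_\bullet, F'_\bullet, h')$ and $l(\nu_{\mathbf{t}}) = (e-r)(f-r)$, so Theorem \ref{thm Schubert} yields
$$[p^{-1}D_r(h)]_{CH} = \mathfrak{S}_{\nu_{\mathbf{t}}}(c_1(M_i), c_1(L_j))\ , \qquad [\mathcal{O}_{p^{-1}D_r(h)}]_{K^0} = \mathfrak{G}_{\nu_{\mathbf{t}}}(c_1(M_i), c_1(L_j^\vee))\ ,$$
where $M_i$ and $L_j$ run over the line-bundle factors of the extended flags on $F'$ and $E'$.

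The symmetry lemma stated just before the corollary, applied to $\nu_{\mathbf{t}}$ and to its inverse (each of which has a single descent, at positions $f$ and $e$ respectively), shows that $\mathfrak{H}_{\nu_{\mathbf{t}}}$ is separately symmetric in $x_1, \ldots, x_f$ and in $y_1, \ldots, y_e$, i.e.\ in precisely those variables coming from the original (non-augmented) flags. Setting the trivial-summand variables to zero and invoking definition~(\ref{def symmetric}) identifies the right-hand sides with $\mathfrak{D}^{CH}_{\mathbf{t}}$ and $\mathfrak{D}^{K^0}_{\mathbf{t}}$ evaluated at elementary symmetric functions of the remaining Chern roots; these are exactly the Chern classes $c_i(F)$ on the one hand and, according to the sign convention built into $\mathfrak{S}_\omega$ or into $\mathfrak{G}_\omega$, either $-c_j(E)$ or $c_j(E^\vee)$ on the other. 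The resulting expressions live on $X$ itself, so by the injectivity of $p^*$ they descend to yield the required identities; independence of the auxiliary flag choices is automatic from the symmetry. The assertion that $D_r(h)$ is Cohen-Macaulay under the codimension hypothesis is classical, e.g.\ \cite[Theorem 14.4(b)]{IntersectionFulton}.

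The main obstacle I anticipate is essentially combinatorial/notational: matching the Schubert-versus-Grothendieck conventions of (\ref{def H1})--(\ref{def H3}) and the ensuing $(-1)^j$ factors to the precise arguments $-c_j(E)$ and $c_j(E^\vee)$ appearing in the two parts of the corollary requires careful, though routine, bookkeeping. Everything else reduces transparently to the tools already supplied by Theorem \ref{thm Schubert} and the symmetry lemma.
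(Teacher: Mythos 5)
Your proposal is correct and follows essentially the same route as the paper's own (very terse) argument: pass to a flag bundle via the splitting principle, augment the bundles and reduce to Theorem~\ref{thm Schubert} through Remark~\ref{remark locus} with $\omega=\nu_{\mathbf{t}}$, and use the symmetry of $\mathfrak{H}_{\nu_{\mathbf{t}}}$ in $x_1,\ldots,x_f$ and $y_1,\ldots,y_e$ together with definition~(\ref{def symmetric}) to re-express the answer in the Chern classes of $E$ and $F$, independently of the auxiliary flags. You have simply made explicit the descent computation for $\nu_{\mathbf{t}}$, the injectivity/flatness of the flag-bundle pullback, and the Cohen-Macaulay assertion, all of which the paper leaves implicit.
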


\section{Extension to other oriented cohomology theories}
We begin this section by providing a description of the classes $\mathcal{R}_I$ for algebraic cobordism and as a consequence, for any oriented cohomology theory. In the second part we derive some consequences for connective $K$-theory, among them the Thom-Porteous formula.

\subsection{Push-forward classes of Bott-Samelson resolutions for $\Omega^*$}
As in the classical cases, one firsts needs an explicit description of the ring associated to $\flag V$.

\begin{proposition}[\protect{\cite[Theorem 2.6]{SchubertHornbostel}}]
Let $V$ be a vector bundle of rank $n$ over $X\in\SM$ and let $J$ be the ideal of $\Omega^*(X)[X_1,\tred,X_n]$ generated by the elements $e_i(\textbf{X})-c_i(V)$ where, for $1\leq i\leq n$, $e_i(\textbf{X})$ is the $i$-th elementary symmetric function and $c_i(V)$ is the $i$-th Chern class of $V$. Then   
$$\Omega^*(\flag (V))\simeq \Omega^*(X)[X_1,\tred,X_n]/J\ ,$$
where the isomorphisms map the $X_i$'s to the Chern roots of $\pi^*V$. 
\end{proposition}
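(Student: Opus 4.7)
The plan is to prove this by iterating the projective bundle formula for algebraic cobordism, exactly paralleling the classical arguments for $CH^*$ and $K^0$. First, I would realize $\pi\colon\flag(V)\to X$ as a tower of projective bundles: at each stage, choosing a quotient line bundle of the current rank-$k$ universal quotient exhibits the partial flag bundle $\flag^{(n-k+1)}(V)$ as $\Proj$ of (the dual of) a rank-$k$ bundle over $\flag^{(n-k)}(V)$. Iterating from $k=n$ down to $k=2$ builds $\flag(V)$ in $n-1$ projective bundle steps, and the tautological line at stage $k$ is naturally identified with $L_k = \text{Ker}(Q_k\twoheadrightarrow Q_{k-1})$.

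Next I would apply the projective bundle formula for $\Omega^*$ from \cite{AlgebraicLevine} at each stage. The formula produces, on each step, one new generator $X_k := c_1(L_k)$ and one polynomial relation expressing the top Chern class of the current universal quotient in terms of $X_k$ and the Chern classes of the preceding quotient. Assembling all $n-1$ steps, one obtains an isomorphism of $\Omega^*(X)$-algebras
\begin{equation*}
\Omega^*(\flag(V))\ \simeq\ \Omega^*(X)[X_1,\ldots,X_n]/J'
\end{equation*}
where $J'$ is the ideal generated by the iterated projective bundle relations.

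It remains to identify $J'$ with the ideal $J$ of the statement. Because the full flag $Q_\bullet$ on $\flag(V)$ has line factors $L_1,\ldots,L_n$, the calculation recalled in Lemma \ref{cor Chern} (applied to the additive setting via iterated Whitney formulas) gives $c_t(\pi^*V)=\prod_{i=1}^{n}(1+X_i t)$, so $c_i(\pi^*V)=e_i(\mathbf{X})$. Combined with functoriality $c_i(\pi^*V)=\pi^*c_i(V)$, every relation $e_i(\mathbf{X})-c_i(V)$ holds in $\Omega^*(\flag(V))$, so $J\subseteq J'$ as ideals in $\Omega^*(X)[\mathbf{X}]$. The main obstacle will be the reverse inclusion: one must check that these symmetric relations already imply the iterated stage-by-stage relations. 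The cleanest route is to compare quotients as $\Omega^*(X)$-modules: both $\Omega^*(X)[\mathbf{X}]/J$ and $\Omega^*(X)[\mathbf{X}]/J'$ are free of rank $n!$ (the former by the classical fact that the coinvariant algebra is free of rank $|S_n|$ over the symmetric functions, the latter by iterated projective bundle formula), and the surjection $\Omega^*(X)[\mathbf{X}]/J\twoheadrightarrow \Omega^*(X)[\mathbf{X}]/J'$ induced by $J\subseteq J'$ between free modules of equal rank must be an isomorphism. This matches the argument given by Hornbostel--Kiritchenko \cite{SchubertHornbostel} for the flag manifold and adapts without change to the relative setting once the projective bundle formula for $\Omega^*$ is in hand.
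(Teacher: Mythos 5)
Your proposal is correct and follows essentially the same route as the paper (and the cited Hornbostel--Kiritchenko argument): realize $\flag(V)$ as an iterated projective bundle, apply the projective bundle formula for $\Omega^*$ at each stage, and then convert the stage-by-stage relations into the symmetric-function presentation. Your way of closing the last step — noting $J\subseteq J'$ via the Whitney formula and then comparing two free $\Omega^*(X)$-modules of rank $n!$, so the induced surjection is an isomorphism — is exactly the kind of ``algebraic manipulation with symmetric functions'' the paper alludes to, and it is sound since a surjection of finitely generated free modules of equal rank over a commutative ring is an isomorphism.
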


\begin{proof}
Since the full flag bundle can be constructed as an iterated $\Proj^m$ bundle, it is sufficient to apply several times of the projective bundle formula and obtain $\Omega^*(\flag V)$ as the quotient of a polynomial ring over $\Omega^*(X)$. One then writes the generators in the desired form by means of some algebraic manipulations with symmetric functions.
\end{proof}

This being achieved, it is necessary to identify the push-pull operators $\varphi_i^*\varphi_{i*}$. This problem was first solved by Bressler-Evens in the topological context of complex-oriented cohomology theories in \cite{BraidBressler} and their result was later imported in the algebraic setting by Hornbostel-Kiritchenko in \cite[Corollary 2.3]{SchubertHornbostel}. 
Let us define the generalized divided difference operators $A_i$  on $\Laz[[\textbf{x},\textbf{y}]]$ by setting $$A_i(f):=(1+\sigma_i)\frac{f}{F(x_i,\chi (x_{i+1}))}\ ,$$ where $1$ represents the identity operator and $\sigma_i$ exchanges $x_i$ and $x_{i+1}$. Moreover, for a nonempty tuple of indices $I=(i_1,\tred,i_l)$, we will write $A_I$ for the composition of operators $A_{i_l}\trecd A_{i_1}$. Clearly this definition makes sense for any formal group law $(R,F_R)$: taking the tensor product with $R$ over $\Laz$ with respect to the classifying morphism $\varPhi_{(R,F_R)}$  yields operators $A_i^{(R,F_R)}$ on $R[[\textbf{x},\textbf{y}]]$.

\begin{remark} \label{rem operators}
 It should be noticed that this procedure can in particular be applied to the formal group law arising from an OCT and, more specifically, that the restriction of $A_i^{CK^*}$ to the polynomial ring $\ZZ[\beta][\textbf{x},\textbf{y}]$ returns the operators $\phi_i^{(\beta)}$.
\end{remark}   
 If one denotes by $\overline{A_i}$ the operators induced on $\Omega^*(\flag V)$, then we have the following  

\begin{proposition}\label{prop operators}
For any $i\in\{1,\tred, n-1\}$ the operator $\varphi_i^*\varphi_{i*}:\Omega^*(\flag V)\rightarrow \Omega^*(\flag V)$ coincides with $\overline{A_i}$. 
\end{proposition}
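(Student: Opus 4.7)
The plan is to identify $\varphi_i \colon \flag V \to \flag_{\widehat{i}} V$ as a projective bundle and then apply the known formula for the push-pull operator along a projective bundle in an oriented cohomology theory. Concretely, by the universal property of the full flag bundle, $\flag V$ is isomorphic to the projective bundle $\mathbb{P}(K)$ over $\flag_{\widehat{i}} V$, where $K := \ker(Q_{i+1} \twoheadrightarrow Q_{i-1})$ is the rank-$2$ vector bundle on $\flag_{\widehat{i}} V$ whose projectivisation parametrises the missing intermediate quotient at level $i$. Under this identification the tautological quotient line bundle $\mathcal{O}_{\mathbb{P}(K)}(1)$ corresponds to $L_i := \ker(Q_i \twoheadrightarrow Q_{i-1})$, so $x_i = c_1(\mathcal{O}(1))$. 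Pulling $K$ back to $\flag V$ yields the short exact sequence $0 \to L_{i+1} \to \varphi_i^* K \to L_i \to 0$, so by the Whitney formula its Chern roots on $\flag V$ are $x_i$ and $x_{i+1}$, and the projective-bundle relation $\xi^2 - c_1(K)\xi + c_2(K) = 0$ ensures that the natural $\Omega^*(\flag_{\widehat{i}} V)$-algebra involution $\sigma_i$ of $\Omega^*(\flag V)$ interchanges these two Chern roots.

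The second step is to invoke the push-pull formula of Bressler--Evens~\cite{BraidBressler}, transported to algebraic cobordism by Hornbostel--Kiritchenko~\cite[Corollary 2.3]{SchubertHornbostel}: for any $\mathbb{P}^1$-bundle $\pi \colon \mathbb{P}(E) \to Y$ in an OCT with $E$ of rank $2$ and Chern roots $\alpha, \beta$, one has
\[
\pi^*\pi_*(f) \;=\; (1+\tau)\,\frac{f}{F_\Omega\bigl(\alpha, \chi_\Omega(\beta)\bigr)},
\]
where $\tau$ swaps $\alpha$ and $\beta$. Applied with $E = K$, $\alpha = x_i$, $\beta = x_{i+1}$ and $\tau = \sigma_i$, the right-hand side is exactly $\overline{A_i}(f)$, which proves the proposition.

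The main obstacle is the Bressler--Evens formula itself, established by a residue-type calculation: one works inside $\Omega^*(\flag V) \cong \Omega^*(\flag_{\widehat{i}} V)[x_i]/\bigl(x_i^2 - c_1(K)x_i + c_2(K)\bigr)$ and uses the universal formal-group identity $F_\Omega(u, \chi_\Omega(u)) = 0$ together with the commutativity and associativity of $F_\Omega$ to verify that the apparent poles along the diagonal $x_i = x_{i+1}$ cancel after symmetrisation, so that $(1+\sigma_i)\bigl[f/F_\Omega(x_i, \chi_\Omega(x_{i+1}))\bigr]$ is an honest element of $\Omega^*(\flag V)$ realising $\varphi_i^*\varphi_{i*}$. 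Since this has been carried out in the cited references, from the point of view of the present paper the proof reduces to the geometric identification $\flag V \cong \mathbb{P}(K)$ above together with a direct appeal to their formula.
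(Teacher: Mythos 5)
Your proof is correct and follows essentially the same route as the paper: both reduce the statement to the identification of $\varphi_i$ as a $\mathbb{P}^1$-bundle (you make the rank-$2$ bundle $K=\ker(Q_{i+1}\srarrow Q_{i-1})$ explicit, which the paper leaves implicit) and then appeal to the known push-forward formula for projective bundles, via Vishik's theorem as specialised in Hornbostel--Kiritchenko. The only cosmetic difference is the citation chain (Bressler--Evens/HK Corollary 2.3 versus Vishik/HK Section 2.1), which leads to the same formula.
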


\begin{proof}
Since pull-back maps are easily described, the main point is to have an explicit expression for the push-foward map of a $\Proj^1$-bundle and this is obtained by specializing the formula for projective bundles proved by Vishik in \cite[Theorem 5.30]{SymmetricVishik}. For a detailed exposition see \cite[Section 2.1]{SchubertHornbostel}.
\end{proof}

\begin{remark}
It is important to point out that the operators $A_i$, unlike all other operators we have encountered so far, do not satisfy the braid relations. For a proof see \cite[Theorem 3.7] {BraidBressler}.
\end{remark}

In view of the recursive definition of Bott-Samelson resolutions, in order to complete the description of the cobordism classes $\mathcal{R}_I:=[R_I\stackrel{r_I}\rightarrow \flag V]=r_{I*}[R_I]_{\Omega^*}$ one only needs to give an expression for the initial class $\mathcal{R}_\emptyset$: all others can be recovered by the relation 
$\mathcal{R}_{(I,j)}=\varphi_j^*\varphi_{j*}\mathcal{R}_I$. Note that the same holds as well for $\mathcal{R}^A_I$ in any other OCT $A^*$.

\begin{proposition}\label{prop initial class}
Let $V\unddot=(V_1\subset V_2\subset ... \subset V_n=V)$ be a full flag of subbundles of $V$ and $Q\unddot=(\pi^*V=Q_n\srarrow Q_{n-1}\srarrow ... \srarrow Q_1)$ be the universal full flag of quotient bundles of $\pi^* V$. Denote by $M_i$ and $L_i$ the line bundles $L_i^{Q\unddot}$ and $L_i^{\pi^*V\unddot}$.
Then in $\Omega^*(\flag V)$ one has 
$$\mathcal{R}_\emptyset=\prod_{k+j\leq n} F(c_1(M_{k}),\chi(c_1(L_j)))\ .$$

\end{proposition}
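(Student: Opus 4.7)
The plan is to invoke Lemma~\ref{lem top Chern} by realizing the embedded copy $r_\emptyset(X) = \Omega_{\omega_0}$ as the zero subscheme of a regular section of the rank $\binom{n}{2}$ vector bundle
\[
E := \bigoplus_{i+j\leq n} M_i \otimes L_j^\vee
\]
on $\flag V$. Since $E$ is a direct sum of line bundles, the Whitney formula combined with the defining relation $c_1(M_i\otimes L_j^\vee) = F(c_1(M_i),\chi(c_1(L_j)))$ yields
\[
c_{\mathrm{top}}(E) = \prod_{i+j \leq n} c_1(M_i \otimes L_j^\vee) = \prod_{i+j \leq n} F(c_1(M_i),\chi(c_1(L_j))),
\]
so the proposition will follow once $\mathcal{R}_\emptyset = c_{\mathrm{top}}(E)$ is established. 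Regularity of the embedding is automatic: $r_\emptyset$ is a closed immersion of smooth schemes of codimension $\dim\flag V - \dim X = \binom{n}{2} = \mathrm{rank}\,E$. Using that $Q\unddot$ pulls back along $r_\emptyset$ to the quotient flag $V/V\unddot$, one identifies $M_i|_{\Omega_{\omega_0}} \cong L_{n-i+1}$ and verifies that the normal bundle $N_{r_\emptyset(X)/\flag V}$ agrees with $E|_{r_\emptyset(X)}$, which is consistent with the choice of $E$.

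The technical crux is to produce a globally defined section $s \in H^0(\flag V, E)$ whose zero scheme equals $r_\emptyset(X)$. For each pair $(i,j)$ with $i+j \leq n$, the $(i,j)$-component $s_{ij}$ should be a morphism $L_j \to M_i$ distilled from the universal map $\pi^*V_j \to Q_i$. One natural approach is to proceed inductively along a total ordering of the index set $\{(i,j) : i+j \leq n\}$: the vanishing of previously constructed components forces the graded projection of $\pi^*V_j \to Q_i$ to become a bona fide morphism $L_j \to M_i$ on each successive zero locus, presenting $\Omega_{\omega_0}$ as a tower of codimension-one vanishing conditions. Equivalently, one can exploit the iterated $\mathbb{P}^1$-bundle description of $\flag V$ (the same tower used to build Bott-Samelson resolutions) to handle one index at a time, each step contributing one first Chern class factor via the projection formula.

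Once the regular section is in place, Lemma~\ref{lem top Chern} gives $\mathcal{R}_\emptyset = r_{\emptyset *}[R_\emptyset]_{\Omega^*} = c_{\mathrm{top}}(E)$, completing the proof. The main obstacle is precisely the construction of $s$: the natural candidate maps $L_j \to M_i$ are not globally defined on all of $\flag V$ but only on the locus where lower-order rank conditions already vanish, so one must perform a careful Koszul-type induction, or reduce to the tower of $\mathbb{P}^1$-bundles, to bypass this obstruction.
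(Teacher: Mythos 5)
Your plan correctly identifies what Lemma~\ref{lem top Chern} requires — a vector bundle of the right rank carrying a regular section whose zero scheme is $\Omega_{\omega_0}$ — and your computation of the top Chern class of a direct sum of line bundles is fine. The problem, which you yourself flag at the end, is that the bundle $E=\bigoplus_{i+j\le n}M_i\otimes L_j^\vee$ does \emph{not} carry the needed global section: the candidate component $L_j\to M_i$ is only defined after the ``earlier'' rank conditions have been imposed, so $E$ is simply the wrong bundle, and the ``Koszul-type induction'' or $\Proj^1$-tower escape routes you gesture at are not actually carried out. This is not a small omission; it is precisely the point where the argument must do real work.

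The paper (following Fulton, \cite[Proposition 7.5]{FlagsFulton}) sidesteps this by replacing $E$ with the bundle
\[
K:=\text{Ker}\Bigl(\bigoplus_{l=1}^{n-1}\text{Hom}(\pi^*V_l,Q_{n-l})\xrightarrow{\ \psi\ }\bigoplus_{l=1}^{n-2}\text{Hom}(\pi^*V_l,Q_{n-l-1})\Bigr),
\]
where $\psi$ sends $(g_l)_l$ to $(g_{l+1}\circ i_l - p_{n-l}\circ g_l)_l$. Unlike your $E$, $K$ is not a sum of line bundles, but it manifestly carries a global section: the universal compositions $h_{l,n-l}:\pi^*V_l\hookrightarrow\pi^*V\twoheadrightarrow Q_{n-l}$ commute with the inclusions and projections, so $(h_{l,n-l})_l$ lies in $K$, and its zero scheme is exactly $\Omega_{\omega_0}$. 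The top Chern class of $K$ is then computed not by inspection but by the Whitney formula applied to the short exact sequence defining $K$ (since $\psi$ is surjective); after writing each $\text{Hom}(\pi^*V_{m_1},Q_{m_2})\simeq(\pi^*V_{m_1})^\vee\otimes Q_{m_2}$ in Chern roots via Lemma~\ref{cor Chern}, the denominator factors cancel against part of the numerator and the surviving product is $\prod_{i+j\le n}F(c_1(M_i),\chi(c_1(L_j)))$ — the same expression you get for $E$, but now attached to a bundle that genuinely admits the regular section required by Lemma~\ref{lem top Chern}. So the gap in your proposal is the choice of bundle: you need to argue with $K$ (or some other bundle that both has the correct top Chern class and has a natural global section), and the ``same top Chern class'' observation about $E$ is a heuristic, not a substitute for producing the section.
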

  
\begin{proof}
The proof does not significantly differ from the one of Fulton for the Chow ring case (\cite[Proposition 7.5]{FlagsFulton}). More specifically, the geometric part is unchanged: one constructs a bundle $K$ of rank $N:=\frac{n(n-1)}{2}$, together with a section $s$, such that the zero scheme $Z(s)$ will coincide with $R_\emptyset=\Omega_{\omega_0}$. One then recovers $\mathcal{R}_I$ as the top Chern class of $K$ by lemma \ref{lem top Chern}. It is in the explicit computation of this class that the difference between the two theories appears, but this does not lead to any substantial change in the algebra required to obtain the final expression.

 More specifically one sets 
$$K:=\text{Ker}\left(\bigoplus_{l=1}^{n-1} \text{Hom}(\pi^*V_l,Q_{n-l})\stackrel{\psi}\longrightarrow \bigoplus_{l=1}^{n-2}\text{Hom}(\pi^*V_l,Q_{n-l-1})\right)\ ,$$
where $\psi$ assigns to  
$\{g_l\}_{l\in\{1,\tred, n-1\}}$ the family $\{g_{l+1}\circ i_l-p_{n-l}\circ g_l\}_{l\in\{1,\tred, n-2\}}$. Here 
$i_l:\pi^*V_l\irarrow \pi^*V_{l+1}$ and $p_l: Q_l\srarrow Q_{l-1}$ are respectively the injections and the projections within the two flags. Since $\psi$ is surjective, the Whitney formula allows to recover the Chern polynomial of $K$ as the ratio of those of the other two bundles and from this it follows immediately that the same holds for the top Chern classes as well. The explicit computation of these classes is reduced, again by the Whitney formula, to the identification of the Chern roots of bundles of the form $\text{Hom}(\pi^*V_{m_1},Q_{m_2})\simeq(\pi^* V_{m_1})^\vee\otimes Q_{m_2}$. Once this is achieved, using lemma \ref{cor Chern}, one notices that all factors in the denominator also occurs in the numerator and therefore one simply has to identify the surving ones: this yields $$c_N(K)=\prod_{k+j\leq n}F(c_1(M_{k}),\chi(c_1(L_j)))\ .$$
 To finish the proof we only need to provide a section such that its zero scheme coincide with $\Omega_{\omega_0}$:  the family of morphisms $h_{l,n-l}:\pi^*V_l\irarrow \pi^*V\srarrow Q_{n-l}$ is clearly sent to $0$ by $\psi$ and 
, as consequence, defines a section of $K$, which happens to satisfy the required condition.
\end{proof}

The analogy with the situation in the Chow ring and in Grothendieck ring with respect to the double Schubert and Grothendieck polynomials suggests the following definition.

\begin{definition}
Fix $n\in\NN$. To every tuple $I=(i_1,\tred,i_l)$ with $i_j\in\{1,\tred,n-1\}$ we associate a power series $\mathfrak{B}^{(n)}_I\in\Laz[\variables]$ by means of a recursive procedure on the length of $I$. For $I=\emptyset$ we set $$\mathfrak{B}^{(n)}_\emptyset:=\prod_{k+j\leq n} F(x_k,y_j)\ .$$
For $I\neq\emptyset$, the operator $A_I$ is well defined  and we set 
$$\mathfrak{B}^{(n)}_I:=A_I \mathfrak{B}^{(n)}_\emptyset\ .$$
The same definition can be carried out in any OCT $A^*$. We will denote the corresponding power series by 
$\mathfrak{B}^{(A,n)}_I$.
\end{definition} 

\begin{remark} \label{rem polynomial}
It is easy to verify that if $I$ is a minimal decomposition then $\mathfrak{B}^{(CK,n)}_I=\mathfrak{H}_{\omega_0s_I}^{(-\beta)}$: it was already pointed out in remark \ref{rem operators} that the divided difference operators coincide and, since the formal group laws are the same, also the starting elements are equal. 
\end{remark}

We are now ready to express our main result about algebraic cobordism, which immediately follows from  propositions \ref{prop operators} and \ref{prop initial class} .  

\begin{theorem}
Let $V\rightarrow X$ be a vector bundle of rank $n$, together with a full flag of subbundles $V\unddot$. Denote by $M_i$ and $L_i$ the line bundles $L_i^{Q\unddot}$ and $L_i^{\pi^*V\unddot}$, where $Q\unddot$ is the universal flag of quotient bundles over $\flag V\stackrel{\pi}\rightarrow X$. For any tuple $I=(i_1,...,i_l)$ with $i_j\in\{1,\tred,n-1 \}$ let us consider the associated Bott-Samelson resolution $R_I\stackrel{r_I}\rightarrow \flag V$. As an element of $\Omega^*(\flag V)$ its pushforward class is given by the formula 

$$\mathcal{R}_I=\mathfrak{B}^{(n)}_I (c_1(M_k),c_1(L_j^\vee))=
\mathfrak{B}^{(n)}_I (c_1(M_k),\chi(c_1(L_j)))\ .$$ 
\end{theorem}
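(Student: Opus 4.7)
The plan is to prove the theorem by straightforward induction on the length $l$ of the tuple $I=(i_1,\ldots,i_l)$, with the two propositions immediately preceding the statement providing, respectively, the base case and the inductive step.

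For the base case $I=\emptyset$, the desired identity reduces directly to Proposition \ref{prop initial class}. Indeed, by definition $\mathfrak{B}^{(n)}_\emptyset(\mathbf{x},\mathbf{y})=\prod_{k+j\leq n}F(x_k,y_j)$, so substituting $x_k=c_1(M_k)$ and $y_j=\chi(c_1(L_j))=c_1(L_j^\vee)$ recovers exactly the expression already computed for $\mathcal{R}_\emptyset$. For the inductive step, write $I=(I',j)$ with $I'=(i_1,\ldots,i_{l-1})$ and $j=i_l$. From the fibre square defining $R_I=R_{I'}\times_{\flag_{\widehat j}V}\flag V$, together with the fact that $\varphi_j$ is a smooth $\Proj^1$-bundle (so that flat base change and the projection formula hold), one reads off the identity
$$\mathcal{R}_I = r_{I*}[R_I]_{\Omega^*} = \varphi_j^{*}\varphi_{j*}\,r_{I'*}[R_{I'}]_{\Omega^*} = \varphi_j^{*}\varphi_{j*}\,\mathcal{R}_{I'},$$
which is precisely the recursion $\mathcal{R}_{(I',j)}=\varphi_j^{*}\varphi_{j*}\mathcal{R}_{I'}$ invoked earlier. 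By Proposition \ref{prop operators}, $\varphi_j^{*}\varphi_{j*}$ coincides with the operator $\overline{A_j}$ acting on $\Omega^*(\flag V)$. Applying the inductive hypothesis $\mathcal{R}_{I'}=\mathfrak{B}^{(n)}_{I'}(c_1(M_k),\chi(c_1(L_j)))$ and using the definition $\mathfrak{B}^{(n)}_I=A_{i_l}\,\mathfrak{B}^{(n)}_{I'}$ then closes the induction.

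The only point that actually requires care is the compatibility between $\overline{A_j}$ acting on $\Omega^*(\flag V)$ and $A_j$ acting on the polynomial ring variables. In the Hornbostel--Kiritchenko presentation recalled earlier, $\overline{A_j}$ is the operator $(1+\sigma_j)\tfrac{(\cdot)}{F(X_j,\chi(X_{j+1}))}$ where the $X_k$'s correspond to the Chern roots $c_1(M_k)$. The variables $y_j=\chi(c_1(L_j))$ come from classes pulled back from $X$ via $\pi$, hence they are constants for the action of $\overline{A_j}$ (they factor out of $\varphi_{j*}$ by the projection formula and are unaffected by $\varphi_j^{*}$). Consequently $\overline{A_j}$ commutes with the substitution $x_k\mapsto c_1(M_k)$, $y_j\mapsto\chi(c_1(L_j))$, which is exactly what is needed for the induction to pass from $\mathfrak{B}^{(n)}_{I'}$ to $\mathfrak{B}^{(n)}_I$ at the level of Chern classes. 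I do not expect a genuine obstacle here: once this bookkeeping is made explicit, both ingredients (the initial class computation and the identification of the push-pull operator) are already in place and the statement drops out immediately.
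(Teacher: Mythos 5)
Your proposal is correct and follows exactly the route the paper takes: the theorem is stated there as an immediate consequence of Propositions \ref{prop operators} and \ref{prop initial class}, via the recursion $\mathcal{R}_{(I',j)}=\varphi_j^*\varphi_{j*}\mathcal{R}_{I'}$ that the paper records just before Proposition \ref{prop initial class}. Your extra remark that the $y$-variables are pulled back from the base and hence behave as constants for $\overline{A_j}$ is a useful piece of bookkeeping that the paper leaves implicit.
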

    

\begin{corollary}\label{cor Bott}
Let $A^*$ be an OCT. Under the same hypothesis of the preceding theorem one has
$$\mathcal{R}^A_I=\mathfrak{B}^{(A,n)}_I (c_1(M_k),c_1(L_j^\vee))=
\mathfrak{B}^{(A,n)}_I (c_1(M_k),\chi(c_1(L_j)))\in A^*(\flag \,V)\ .$$
\end{corollary}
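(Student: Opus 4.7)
The plan is to deduce this corollary formally from the preceding theorem by invoking the universality of $\Omega^*$ among oriented cohomology theories. For any OCT $A^*$, Levine--Morel provide a unique morphism of OCTs $\vartheta_{A^*}:\Omega^*\to A^*$, which on coefficient rings coincides with the classifying morphism $\varPhi_A:\Laz\to A^*(k)$ of the formal group law $(A^*(k),F_A)$. My strategy is to apply $\vartheta_{A^*}$ to the identity
$$\mathcal{R}_I=\mathfrak{B}^{(n)}_I(c_1(M_k),\chi(c_1(L_j)))$$
in $\Omega^*(\flag V)$ provided by the theorem, and to analyse what happens to each side.

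For the left-hand side: since $R_I$ is smooth, $[R_I]_{\Omega^*}$ is the unit in $\Omega^*(R_I)$, and similarly $[R_I]_{A^*}=1$ in $A^*(R_I)$. Because $\vartheta_{A^*}$ is a natural transformation compatible with projective push-forwards, one has
$$\vartheta_{A^*}(\mathcal{R}_I)=\vartheta_{A^*}(r_{I*}[R_I]_{\Omega^*})=r_{I*}[R_I]_{A^*}=\mathcal{R}^A_I.$$

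For the right-hand side: compatibility of $\vartheta_{A^*}$ with first Chern classes of line bundles yields $\vartheta_{A^*}(c_1^\Omega(M_k))=c_1^A(M_k)$ and $\vartheta_{A^*}(c_1^\Omega(L_j))=c_1^A(L_j)$. Moreover, $\mathfrak{B}^{(n)}_I$ is built recursively from the initial expression $\prod_{k+j\leq n}F(x_k,y_j)$ by applying operators $A_i$ whose coefficients lie in $\Laz$; by the very definition of $\mathfrak{B}^{(A,n)}_I$ as the tensor product along $\varPhi_{(A^*(k),F_A)}$, the base change of $F$ (resp.\ $\chi$) along $\varPhi_A$ is $F_A$ (resp.\ $\chi_A$), so $\mathfrak{B}^{(n)}_I$ is carried to $\mathfrak{B}^{(A,n)}_I$. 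Hence applying $\vartheta_{A^*}$ to the right-hand side produces
$$\mathfrak{B}^{(A,n)}_I(c_1^A(M_k),\chi_A(c_1^A(L_j)))=\mathfrak{B}^{(A,n)}_I(c_1^A(M_k),c_1^A(L_j^\vee)),$$
where the second equality uses the general identity $c_1^A(L^\vee)=\chi_A(c_1^A(L))$ recalled in Section~2.

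I do not anticipate any real obstacle: the corollary is a purely formal consequence of universality once the theorem is in hand. The only point that requires a moment's care is the assertion that $\vartheta_{A^*}$ applied to a formal expression built out of $F$, $\chi$ and the Chern roots recovers the analogous expression built out of $F_A$, $\chi_A$ and the same Chern roots in $A^*$; this in turn is immediate from the fact that $\vartheta_{A^*}$ restricts to $\varPhi_A$ on the coefficient ring $\Laz$ and commutes with Chern classes, so it intertwines every finite composition of the operators $A_i$ with its counterpart $A_i^{(A^*(k),F_A)}$.
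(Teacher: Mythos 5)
Your argument is correct and is precisely the paper's one-line proof (``apply the canonical morphism $\vartheta_{A^*}$''), with the relevant details --- naturality with respect to projective push-forwards, compatibility with first Chern classes, and the fact that $\vartheta_{A^*}$ restricts to $\varPhi_A$ on $\Laz$ and hence carries $F,\chi$ to $F_A,\chi_A$ and $\mathfrak{B}^{(n)}_I$ to $\mathfrak{B}^{(A,n)}_I$ --- spelled out explicitly. Nothing is missing.
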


\begin{proof}
One only needs to apply the canonical morphism $\vartheta_{A^*}$ to the statement of the theorem.
\end{proof}

\begin{remark}
It is worth stressing that it is at this point that the analogy with the  classical case breaks down: as it was pointed out in \cite[\S 5.2]{SchubertHornbostel} even for the flag manifold $\flag\, k^3$ one has that $\mathcal{R}_{(1,2,1)}$, $\mathcal{R}_{(2,1,2)}$ and $[\Omega_{id}]_{\Omega}=[\flag\, k^3]_\Omega=1$ do not coincide. Moreover not all Schubert varieties are l.c.i. schemes, so in general they do not have a fundamental class.

\end{remark}




\subsection{Thom-Porteous formula for $CK^*$}

We will now specialise the formula of \ref{cor Bott} to the special case of connective $K$-theory. This choice is motivated by a result of Bressler-Evens (\cite[Theorem 3.7]{BraidBressler}), which says that the most general formal group law for which the braid relations hold is the multiplicative one. The point for us is that the braid relations allows us to conclude that the push-forward class $\mathcal{R}_I$ of different desingularizations of the same Schubert variety $\Omega_\omega$ are all represented by the same polynomial. 

\begin{proposition}
Let $V\rightarrow X$ be a vector bundle of rank $n$ with $X\in \SM$ and let $\omega\in S_n$. Denote by $M_i$ and $L_i$ the line bundles $L_i^{Q\unddot}$ and $L_i^{\pi^*V\unddot}$.
 In $CK^*(\flag(V))$ one has
$$[\Omega_\omega]_{CK}=\mathfrak{H}_\omega(c_1(M_i),c_1 (L^\vee_j))\ .$$ 
\end{proposition}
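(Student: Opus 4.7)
The plan is to realise $\Omega_\omega$ as the image of a Bott-Samelson resolution and then identify the push-forward of its fundamental class via the cobordism computation of the previous subsection, specialised to $CK^*$. Fix any minimal decomposition $I=(i_1,\ldots,i_l)$ of $\omega_0\omega$, so that $\omega=\omega_0 s_I$, and consider the associated Bott-Samelson resolution $r_I:R_I\to\flag V$.

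By Proposition~\ref{prop resolution}(1), $r_I$ factors through $\Omega_\omega$ as a projective birational morphism $R_I\to\Omega_\omega$; since $X\in\SM$, $R_I$ is smooth, so this is a resolution of singularities. By Proposition~\ref{prop resolution}(2)(ii), $\Omega_\omega$ has at worst rational singularities, and it is equi-dimensional as the image of the irreducible $R_I$. Applying Lemma~\ref{lem class} to $R_I\to\Omega_\omega$ and composing with the push-forward along the closed immersion $\Omega_\omega\hookrightarrow\flag V$ yields
$$\mathcal{R}_I^{CK}\;=\;r_{I*}[R_I]_{CK}\;=\;[\Omega_\omega]_{CK}\quad\text{in}\quad CK^*(\flag V).$$

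On the other hand, Corollary~\ref{cor Bott} applied to $A^*=CK^*$ gives
$$\mathcal{R}_I^{CK}\;=\;\mathfrak{B}^{(CK,n)}_I\bigl(c_1(M_k),c_1(L_j^\vee)\bigr),$$
and by Remark~\ref{rem polynomial} (using that $I$ is minimal, that the initial class from Proposition~\ref{prop initial class} matches the defining formula for $\mathfrak{H}_{\omega_0}$, and that the operators $A_i^{CK}$ agree with the $\beta$-divided difference operators on the relevant polynomial ring), this coincides with $\mathfrak{H}_\omega(c_1(M_k),c_1(L_j^\vee))$. Combining the two identities gives the stated formula.

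The point that requires attention is that the right-hand side is genuinely a function of $\omega$ alone, independent of the minimal decomposition $I$ chosen to construct $R_I$. This is exactly where the multiplicative formal group law is crucial: by the Bressler--Evens theorem recalled just before the statement, the operators $\phi_i$ satisfy the braid relations precisely in the multiplicative case, so $\mathfrak{H}_\omega$ is well-defined in the sense of Remark~\ref{rem braid}. This is the structural reason the proposition holds for $CK^*$ while, as the subsequent remark on $\flag\,k^3$ emphasises, its naive analogue fails for a general oriented cohomology theory.
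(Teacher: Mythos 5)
Your proposal is correct and follows essentially the same route as the paper: choose a minimal decomposition $I$ of $\omega_0\omega$, apply Proposition~\ref{prop resolution} and Lemma~\ref{lem class} to identify $\mathcal{R}_I^{CK}$ with $[\Omega_\omega]_{CK}$, and then use Corollary~\ref{cor Bott} together with Remark~\ref{rem polynomial} to identify $\mathcal{R}_I^{CK}$ with the double $\beta$-polynomial. Your closing remark on the independence of the chosen minimal decomposition, resting on the braid relations for $\phi_i$ in the multiplicative case, is a worthwhile expansion of a point the paper leaves implicit (via Remark~\ref{rem braid}) rather than a different argument.
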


\begin{proof}
In view of proposition \ref{prop resolution} we can apply lemma  \ref{lem class} and hence if $I$ is a minimal decomposition of $\omega_0\omega$, then $\mathcal{R}_I^{CK}=r_{I*}[R_I]_{CK}=[\Omega_{w_0s_I}]_{CK}$. The result then follows since, as it was pointed out in remark \ref{rem polynomial}, one has $\mathfrak{B}^{(CK,n)}_I=\mathcal{H}_{\omega_0s_I}^{(-\beta)}$. 
\end{proof}

Next we consider the more general case of degeneracy loci with expected codimension. 

\begin{theorem}\label{thm connSchubert}
Let $h:E\rightarrow F$ be a morphism of vector bundles of rank $n$ over $X\in\SM$. Let $E_{\textbf{\textbullet}}$ and $F_{\textbf{\textbullet}}$ be full flags of $E$ and $F$ respectively.  Let $\omega\in S_n $ and assume that the degeneracy locus $\Omega_{r_\omega}(E\unddot,F\unddot,h)$ has codimension $l(\omega)$ in $X$. Denote by $M_i$ and $L_i$ the line bundles $L_i^{F\unddot}$ and $L_i^{E\unddot}$. Then in $CK_*(X)$ one has  
$$\ [\Omega_{r_\omega}(E\unddot,F\unddot,h)]_{CK_*}=
\mathfrak{H}_\omega(c_1(M_i),c_1(L^\vee_j))\ .$$
\end{theorem}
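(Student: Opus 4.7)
The plan is to follow Fulton's reduction strategy, leveraging the formula for Schubert varieties in $CK^*$ of the flag bundle already established in the preceding proposition. The two geometric reductions of the previous section (Remarks \ref{remark id} and \ref{remark pullback}) translate the universal statement into the desired one, provided we can control the behaviour of the $CK$-theoretic fundamental class under l.c.i.\ pull-backs.

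First, I would invoke Remark \ref{remark id} to replace the triple $(E,F,h)$ by $(V,V,id_V)$ with $V=E\oplus F$ endowed with the extended full flags $E'\unddot$ of subbundles and $F'\unddot$ of quotient bundles constructed there. Since the two degeneracy loci are cut out by the same equations and the line bundles $L^{F'\unddot}_i$, $L^{E'/E'\unddot}_j$ restrict to $M_i$, $L_j$ on the relevant indices (with the trivial ones contributing nothing to Chern classes), it suffices to prove the formula under the additional assumption $h=id_V$. Next, by Remark \ref{remark pullback}, $\Omega_{r_\omega}(E'\unddot,F'\unddot,id_V)$ is precisely the scheme-theoretic pre-image $i_{F'\unddot}^{-1}(\Omega_\omega)$ of the universal Schubert variety $\Omega_\omega\subset\flag V$ (defined using $\pi^*E'\unddot$) along the section $i_{F'\unddot}:X\to\flag V$ associated to $F'\unddot$. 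Since $\pi:\flag V\to X$ is smooth, $i_{F'\unddot}$ is a regular closed embedding, hence an l.c.i.\ morphism of pure codimension $\dim_k\flag V-\dim_k X$.

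Second, the preceding proposition applied to $\pi^*V\to\flag V$ (with the flag $\pi^*E'\unddot$) yields
$$[\Omega_\omega]_{CK}=\mathfrak{H}_\omega\bigl(c_1(\mathcal{M}_i),c_1(\mathcal{L}_j^\vee)\bigr)\in CK^*(\flag V),$$
where $\mathcal{M}_i$ and $\mathcal{L}_j$ are the universal line bundles pulling back to $M_i$ and $L_j$ under $i_{F'\unddot}$. Applying the l.c.i.\ pull-back $i_{F'\unddot}^*$ and using both the naturality of Chern classes and the compatibility of the Dai-Levine fundamental class with l.c.i.\ pull-backs (\cite[Theorem 7.4]{ConnectiveDai}), one would obtain
$$i_{F'\unddot}^{*}[\Omega_\omega]_{CK}=\mathfrak{H}_\omega\bigl(c_1(M_i),c_1(L_j^\vee)\bigr)\in CK^*(X).$$

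Third, I would identify $i_{F'\unddot}^*[\Omega_\omega]_{CK}$ with $[\Omega_{r_\omega}(E\unddot,F\unddot,h)]_{CK}$. This is the step where the expected codimension hypothesis $\text{codim}(\Omega_{r_\omega}(E\unddot,F\unddot,h),X)=l(\omega)=\text{codim}(\Omega_\omega,\flag V)$ enters in an essential way: it ensures that the scheme-theoretic fibre of the regular embedding $i_{F'\unddot}$ over $\Omega_\omega$ is equi-dimensional of the correct dimension, so that its $CK$-fundamental class in the sense of (\ref{eq fundamental}) agrees with the refined l.c.i.\ pull-back of $[\Omega_\omega]_{CK}$. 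Concretely, one reduces this comparison to $G_0[\beta,\beta^{-1}]$ via the isomorphism $\psi$ of (\ref{eq isomorphism}) on the top graded piece, where the analogous statement is a standard consequence of the perfect dimensional behaviour of the pull-back of the structure sheaf along a regular embedding of the expected codimension.

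The main obstacle is this last compatibility between the Dai-Levine fundamental class and the l.c.i.\ pull-back in the presence of a codimension hypothesis: outside the expected-codimension regime the pull-back of $[\Omega_\omega]_{CK}$ would pick up excess-intersection contributions, and verifying that the hypothesis prevents this is the one non-formal point in the argument. Once this is in place, the desired formula follows immediately by combining the three steps above.
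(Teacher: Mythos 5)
Your argument is essentially identical to the paper's own proof: reduce to $E=F=V$, $h=\mathrm{id}_V$ via Remark~\ref{remark id}, realise the degeneracy locus as the preimage $s_{F\unddot}^{-1}\Omega_\omega$ of the universal Schubert variety along the section induced by $F\unddot$, apply the preceding proposition in $CK^*(\flag V)$, and pull back, using the expected-codimension hypothesis to match the $CK$-fundamental classes. The only difference is cosmetic: you spell out the last compatibility (reduction to $G_0[\beta,\beta^{-1}]$ via the isomorphism $\psi$) in more detail than the paper, which simply asserts it.
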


\begin{proof}
In view of remark \ref{remark id} we can restrict to the case in which $E=F=V$ and $h=id_V$. In this simplified setting  we can reduce to the universal case represented by Schubert varieties: thanks to the universal property of the flag bundle, the full flag of quotient bundles $F\unddot$ yields a section $s_{F\unddot}$ and the preimage $s^{-1}_{F\unddot}\Omega_\omega$ is precisely $\Omega_{r_\omega}(E\unddot,F\unddot,h)$. One then applies the previous proposition to $\Omega_\omega$ and since the degeneracy locus has the expected codimension it follows that the pull-back along  $s_{F\unddot}$ maps $[\Omega_\omega]_{CK^*}$ onto $[\Omega_{r_\omega}(E\unddot,F\unddot,h)]_{CK^*}$.
\end{proof}

We complete our treatment by establishing the Thom-Porteous formula for connective $K$-theory.

\begin{corollary}[Thom-Porteous formula]\label{cor connThom}

Let $E\stackrel{h}\rightarrow F$ be a morphism of vector bundles of rank $e$ and $f$ over $X\in\SM$ and fix $r$ with $0\leq r\leq min(e,f)$. Denote by $\textbf{t}$ the triple $(e,f,r)$ and assume that $codim(D_r(h),X)=(e-r)(f-r)$. Then in $CK^*(X)$  one  has
$$[D_r(h)]_{CK^*}=\mathfrak{D}^{CK}_{\textbf{t}}(c_i(F),c_j(E^\vee))\ .$$
\end{corollary}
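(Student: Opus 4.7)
The plan is to reduce the statement to the flagged version of Theorem~\ref{thm connSchubert} via the construction of Remark~\ref{remark locus}, and then to collapse Chern roots to Chern classes by invoking the symmetry of $\mathfrak{H}_{\nu_{\textbf{t}}}$. Since both sides of the desired identity depend only on the Chern classes of $E$ and $F$, the splitting principle lets me pass to an iterated flag bundle over $X$ on which $E$ acquires a full flag of subbundles $E\unddot$ and $F$ a full flag of quotient bundles $F\unddot$, the pull-back on $CK^*$ being injective.

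Following Remark~\ref{remark locus}, I form $E' := E \oplus \mathbb{A}_X^{f-r}$ and $F' := F \oplus \mathbb{A}_X^{e-r}$, both of rank $N := e+f-r$, extend $h$ by zero to $h' : E' \to F'$, and extend the given flags by trivial line bundles. The remark identifies
$$D_r(h) = \Omega_{\textbf{r}_{\nu_{\textbf{t}}}}(E'\unddot, F'\unddot, h')$$
scheme-theoretically; a direct count of inversions shows $l(\nu_{\textbf{t}}) = (e-r)(f-r)$, matching the codimension hypothesis. Theorem~\ref{thm connSchubert} therefore applies and yields
$$[D_r(h)]_{CK^*} = \mathfrak{H}_{\nu_{\textbf{t}}}\bigl(c_1(M'_i),\, c_1((L'_j)^\vee)\bigr),$$
where $c_1(M'_i)$ and $c_1((L'_j)^\vee)$ run over the Chern roots of $F'$ and of $(E')^\vee$ respectively, in the order induced by the extended flags.

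To match this with the definition of $\mathfrak{D}^{CK}_{\textbf{t}}$, I observe that by construction the Chern roots of $F'$ consist of $c_1(M_1), \ldots, c_1(M_f)$ followed by $e-r$ zeros, and those of $(E')^\vee$ of $c_1(L_1^\vee), \ldots, c_1(L_e^\vee)$ followed by $f-r$ zeros --- the latter because the formal inverse for $CK^*$ sends $0$ to $0$. The symmetry lemma stated just before Definition~\ref{def symmetric} asserts that $\mathfrak{H}_{\nu_{\textbf{t}}}$ is symmetric in $x_1, \ldots, x_f$ and separately in $y_1, \ldots, y_e$, precisely the positions occupied by the non-trivial Chern roots. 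The resulting expression is therefore exactly that appearing on the right-hand side of Definition~\ref{def symmetric}, and identifying the elementary symmetric functions of the Chern roots of $F$ and of $E^\vee$ with $c_i(F)$ and $c_j(E^\vee)$ gives the stated formula.

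The only real bookkeeping step is verifying the hypothesis of the symmetry lemma, i.e.\ that $\nu_{\textbf{t}}$ has ascents at every position in $\{1,\ldots,f-1\}\cup\{f+1,\ldots,N-1\}$, with its single descent at position $f$, and similarly for $\nu_{\textbf{t}}^{-1}$. This is immediate from the two-line presentation of $\nu_{\textbf{t}}$, so no genuine obstacle remains: the argument is essentially a transcription of Fulton's classical proof, with Theorem~\ref{thm connSchubert} and Remark~\ref{remark locus} supplying the two substantive inputs.
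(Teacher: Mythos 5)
Your proposal is correct and follows the same route as the paper's own proof: splitting principle to produce the flags, Remark \ref{remark locus} with $\omega=\nu_{\mathbf{t}}$ to reduce to Theorem \ref{thm connSchubert}, and then the symmetry of $\mathfrak{H}_{\nu_{\mathbf{t}}}$ together with the definition (\ref{def symmetric}) of $\mathfrak{D}^{CK}_{\mathbf{t}}$ to pass from Chern roots to Chern classes. You simply spell out the bookkeeping (the rank count $N=e+f-r$, $l(\nu_{\mathbf{t}})=(e-r)(f-r)$, the vanishing of the extra Chern roots) that the paper leaves implicit.
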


\begin{proof}
First one uses the splitting principle to obtain full flags on $E$ and $F$. Then in view of remark \ref{remark locus} one can reduce to the setting of the theorem by setting $w=\nu_{\textbf{t}}$, where $\textbf{t}=(e,f,r)$. Finally one recalls (\ref{def symmetric}), the definition of the polynomials $\mathfrak{D}^{(\beta)}_{\textbf{t}}$, and observes that the expression obtained only depends on the elementary symmetric functions in the Chern roots (i.e. on the Chern classes) and as a consequence the formula does not depend on the choices made.
\end{proof}

\begin{corollary} 
Theorem \ref{thm connSchubert} and corollary \ref{cor connThom} recover both statements of theorem \ref{thm Schubert} and corollary \ref{cor Thom}.
\end{corollary}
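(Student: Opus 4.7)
The plan is to apply the canonical morphisms of OCTs $\vartheta^{CK}_{CH^*}\colon CK^*\to CH^*$ and $\vartheta^{CK}_{K^0}\colon CK^*\to K^0$ to both sides of the CK-theoretic formulas of Theorem \ref{thm connSchubert} and Corollary \ref{cor connThom}. The former is induced by the map of formal group laws sending the $CK$ parameter $\beta$ to $0$, while the latter factors through $K^0[\beta,\beta^{-1}]$ and ultimately sends $\beta$ to $1$. Since these morphisms are by construction natural transformations of oriented cohomology theories, they automatically commute with pullbacks and projective pushforwards, so it suffices to track what happens to each side of the identities in question under specialization.

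To handle the left-hand side I would invoke the Dai--Levine compatibility of the $CK$-fundamental class with the canonical specializations (\cite[Proposition 7.5]{ConnectiveDai}), which states that $\vartheta^{CK}_{CH^*}$ and $\vartheta^{CK}_{K^0}$ send $[X]_{CK}$ to $[X]_{CH_*}$ and $[\mathcal{O}_X]_{K^0}$ respectively, for any equi-dimensional $X$. Applied to $X=\Omega_{r_\omega}(E\unddot,F\unddot,h)$ and to $X=D_r(h)$, this identifies the left-hand sides of the specialized formulas with exactly those appearing in Theorem \ref{thm Schubert} and Corollary \ref{cor Thom}.

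On the right-hand side the task is to specialize $\mathfrak{H}_\omega(c_1(M_i), c_1(L_j^\vee))$, which by Remark \ref{rem polynomial} is understood as $\mathfrak{H}^{(-\beta)}_\omega$ with $\beta$ the $CK$ formal parameter. Under $\vartheta^{CK}_{CH^*}$ one has $\beta\mapsto 0$, so $\mathfrak{H}^{(-\beta)}_\omega$ collapses to $\mathfrak{H}^{(0)}_\omega$; combining this with the defining identity $\mathfrak{S}_\omega(\mathbf{x},\mathbf{y})=\mathfrak{H}^{(0)}_\omega(\mathbf{x},-\mathbf{y})$ and with the fact that $\chi_{CH}(u)=-u$, hence $c_1(L_j^\vee)=-c_1(L_j)$ in $CH^*$, yields $\mathfrak{S}_\omega(c_1(M_i),c_1(L_j))$, matching Theorem \ref{thm Schubert}(i). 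Under $\vartheta^{CK}_{K^0}$ the polynomial becomes $\mathfrak{H}^{(-1)}_\omega=\mathfrak{G}_\omega$ while the arguments $c_1(L_j^\vee)$ persist without change, recovering Theorem \ref{thm Schubert}(ii). The Thom-Porteous statement of Corollary \ref{cor Thom} then follows at once from the same two specializations applied to the defining formula (\ref{def symmetric}) for $\mathfrak{D}^{(\beta)}_{\mathbf{t}}$, since $\mathfrak{D}^{CH}_{\mathbf{t}}$ and $\mathfrak{D}^{K^0}_{\mathbf{t}}$ are by definition the $\beta=0$ and $\beta=-1$ instances of $\mathfrak{D}^{(\beta)}_{\mathbf{t}}$.

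The whole argument is essentially formal, so I do not expect any genuine obstacle; the only care needed is in bookkeeping the sign between the Fomin--Kirillov parameter of $\mathfrak{H}^{(\beta)}$ and the $CK$ formal parameter (Remark \ref{rem polynomial}), and the fact that the classical double Schubert polynomial already incorporates a sign in its second group of variables while $\mathfrak{G}_\omega$ does not.
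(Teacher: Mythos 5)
Your argument is correct and takes essentially the same route as the paper: the paper's proof is the one-line observation that applying the canonical natural transformations $CK^*\to CH^*$ and $CK^*\to K^0[\beta,\beta^{-1}]$ (followed by setting $\beta=1$ for the ungraded Grothendieck ring) to the $CK^*$-formulas recovers the classical statements. You simply spell out the bookkeeping — the Dai--Levine compatibility of $[X]_{CK}$ with those specializations on the left, the $\beta\mapsto 0$ resp.\ $\beta\mapsto 1$ specialization of $\mathfrak{H}^{(-\beta)}_\omega$ on the right, and the extra sign twist $\chi_{CH}(u)=-u$ together with $\mathfrak{S}_\omega(\textbf{x},\textbf{y})=\mathfrak{H}^{(0)}_\omega(\textbf{x},-\textbf{y})$ that converts $\mathfrak{H}^{(0)}_\omega(c_1(M_i),c_1(L_j^\vee))$ into $\mathfrak{S}_\omega(c_1(M_i),c_1(L_j))$ — which the paper leaves implicit.
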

\begin{proof}
It suffices to apply to the equalities the canonical natural transformations $CK^*\rightarrow CH^* $ and $CK^*\rightarrow K_0[\beta,\beta^{-1}]$ arising from the universality of connective $K$-theory. This immediately recovers the results for the Chow ring, while for the Grothendieck ring it is still necessary to set $\beta$ equal to~1.   
\end{proof}

\section{Appendix: Double $\beta$-polynomials}

In \cite{GrothendieckFomin} Fomin and Kirillov propose an alternative definition of $\beta$-polynomials, viewed as coefficients of a particular element $H(\textbf{x})$ of a Hecke algebra $\mathcal{A}_n^{(\beta)}$. As they point out, by following the approach used in \cite{SchubertFomin} for Schubert polynomials, it is also possible to define double $\beta$-polynomials.  This appendix serves the purpose of making explicit the equivalence of the two different definitions and to derive some properties arising from this equivalence. For a more detailed exposition we refer the reader to \cite{DoubleGrothendieckFomin}, with the warning that in this source Grothendieck polynomials are given an extra parameter and therefore coincide with $\beta$-polynomials. We will follow the notations of \cite{GrothendieckFomin} and write $x\oplus y$ for $x+y+\beta xy$ and $\ominus x$ for $-\frac{x}{1+\beta x}$. 

In our context $\mathcal{A}_n^{(\beta)}$ will denote an associative algebra over $\ZZ[\beta][\textbf{x},\textbf{y}]$ generated by elements $u_i\,$, $i\in\{1,\tred,n-1\}$ which satisfy the following three sets of  relations:
\begin{align}
i)\ \  u_iu_j=u_ju_i \quad |i-j|<2\ ;\qquad ii) \ \ u_iu_{i+1}u_i=u_{i+1}u_iu_{i+1}\ ;\qquad iii)\ \  u_i^2=\beta u_i\ .\label{eq relations}
\end{align}
An immediate consequence of these relations is that it is possible to express any product of the generators as a reduced decomposition of an element of $S_n$ and hence every element $\alpha$ can be expressed as $\sum_{\omega\in S_n} A_\omega\cdot \omega$ with $A_\omega\in\ZZ[\beta][\textbf{x},\textbf{y}].$ 

 Given $h_i(x):=1+x\cdot u_i$ and $\alpha_i(x):=h_{n-1}(x)\trecd h_{i}(x)$, we set
$$H(\textbf{x}):=\alpha_1(x_1)\trecd\alpha_{n-1}(x_{n-1})\ .$$
In a similar fashion one defines the analogues for the second set of variables:
 $$\widetilde{\alpha}_i(y):=h_i(y)\trecd h_{n-1}(y)\qquad ;\qquad\widetilde{H}(\textbf{y}):=\alpha_{n-1}(y_{n-1})\trecd\alpha_1(y_1)\ .
$$

\begin{definition}
$$H(\textbf{x},\textbf{y}):=\widetilde{H}(\textbf{y})H(\textbf{x})=\sum_{\omega\in S_n}H_\omega(\textbf{x},\textbf{y}) \cdot\omega\ .$$
\end{definition}

In order to manipulate $H(\textbf{x},\textbf{y})$ into a more convenient expression, one needs two lemmas, whose proof is based on the following equalities:
\begin{align}
i)\ \  h_i(x)h_j(y)=h_j(y)h_i(x) \quad |i-j|<2\quad &;
\quad ii)\ \ h_i(x)h_i(y)=h_i(x\oplus y)\ ; \label{eq h1}
\\\quad iii) \ \ h_i(x)h_{i+1}(x\oplus y)h_i(y)=&h_{i+1}(y)h_i(x\oplus y)h_{i+1}(x)\ . \label{eq h2}
\end{align}
Note that all these formulas can be easily proven through direct calculation by making use of (\ref{eq relations}).

\begin{lemma}
For any variables $x$ and $y$ we have: $\alpha_i(x)\alpha_i(y)=\alpha_i(y)\alpha_i(x)\ ; \  \alpha_i(x)\widetilde{\alpha}_i(y)=\widetilde{\alpha}_i(y)\alpha_i(x)\, .$

\end{lemma}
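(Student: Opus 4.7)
My plan is to prove part (a) directly by downward induction on $i$, and then deduce part (b) from part (a) using that each $h_j(z)$ is invertible.

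For part (a), the base case $i = n-1$ is immediate: $\alpha_{n-1}(x)\alpha_{n-1}(y) = h_{n-1}(x)h_{n-1}(y) = h_{n-1}(x\oplus y)$ by (\ref{eq h1}.ii), and $\oplus$ is commutative. For the inductive step, I would write $\alpha_i(x) = \alpha_{i+1}(x)\,h_i(x)$, so that
\[
\alpha_i(x)\alpha_i(y) \;=\; \alpha_{i+1}(x)\, h_i(x)\, \alpha_{i+1}(y)\, h_i(y).
\]
Using (\ref{eq h1}.i), $h_i(x)$ commutes with every factor of $\alpha_{i+1}(y)$ except the rightmost $h_{i+1}(y)$, so this expression reduces to $\alpha_{i+1}(x)\,\alpha'_{i+2}(y)\cdot h_i(x)\,h_{i+1}(y)\,h_i(y)$ with $\alpha'_{i+2}(y) := h_{n-1}(y)\cdots h_{i+2}(y)$. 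The core factor $h_i(x)\,h_{i+1}(y)\,h_i(y)$ can then be reshaped via the Yang-Baxter relation (\ref{eq h2}.iii); to match its argument pattern $h_i(a)\,h_{i+1}(a\oplus b)\,h_i(b)$ one first inserts identities of the form $1 = h_j(z)\,h_j(\ominus z)$ and uses (\ref{eq h1}.ii) to split or merge arguments. After commuting the resulting factors back through $\alpha_{i+1}(x)$ using (\ref{eq h1}.i), the inductive hypothesis $\alpha_{i+1}(x)\alpha_{i+1}(y) = \alpha_{i+1}(y)\alpha_{i+1}(x)$ produces the mirror expression $\alpha_i(y)\alpha_i(x)$. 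Conceptually this is the standard RTT-type argument: $\alpha_i(z)$ is a row-transfer operator moving strand $i$ through strands $i+1,\dots,n-1$, and two such transfers with different spectral parameters commute by virtue of (\ref{eq h1}.i), (\ref{eq h1}.ii) and (\ref{eq h2}.iii).

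For part (b), the key preliminary observation is that each $h_j(z)$ is invertible with $h_j(z)^{-1} = h_j(\ominus z)$, since $h_j(z)\,h_j(\ominus z) = h_j(z\oplus \ominus z) = h_j(0) = 1$ by (\ref{eq h1}.ii). Telescoping this cancellation through matching factors yields
\[
\alpha_i(x)\,\widetilde{\alpha}_i(\ominus x) \;=\; h_{n-1}(x)\cdots h_i(x)\,h_i(\ominus x)\cdots h_{n-1}(\ominus x) \;=\; 1,
\]
so $\widetilde{\alpha}_i(y) = \alpha_i(\ominus y)^{-1}$. Consequently $\alpha_i(x)\widetilde{\alpha}_i(y) = \widetilde{\alpha}_i(y)\alpha_i(x)$ is equivalent, after multiplying both sides by $\alpha_i(\ominus y)$, to $\alpha_i(x)\alpha_i(\ominus y) = \alpha_i(\ominus y)\alpha_i(x)$, which is just part (a) applied to $x$ and $\ominus y$.

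The main obstacle is the Yang-Baxter manipulation in part (a): although the overall structure is dictated by standard $R$-matrix arguments, finding the right sequence of insertions and splittings to produce the argument pattern $(a,a\oplus b,b)$ required by (\ref{eq h2}.iii), and carefully tracking the formal-group-law operations $\oplus$ and $\ominus$ through the computation, is the technically delicate step.
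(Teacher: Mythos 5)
Your proposal is correct and follows essentially the same route as the paper: the first identity is exactly the downward induction combined with the Yang--Baxter relation \eqref{eq h2} that the paper delegates to \cite[Lemma 2.1]{SchubertFomin}, and the second is deduced from the first via $\widetilde{\alpha}_i(y)=\alpha_i^{-1}(\ominus y)$, which is precisely the paper's argument. The only remark is that your inductive step is cleanest if you write $\alpha_{i+2}(y)=\alpha_{i+1}(y)h_{i+1}(y)^{-1}$ so that the residual factor $h_{i+1}(y)^{-1}h_i(x)h_{i+1}(y)h_i(y)$ is shown symmetric in $x\leftrightarrow y$ directly from \eqref{eq h1}--\eqref{eq h2}, after which the inductive hypothesis finishes the argument.
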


\begin{proof}
The proof of the first statement consists of formal manipulations using equalities (\ref{eq h1})-(\ref{eq h2}) and mimics the one for Schubert polynomials. For the details see \cite[Lemma 2.1]{SchubertFomin}. The second equality follows from the first since $\widetilde{\alpha}_i(y)=\alpha_i^{-1}(\ominus y)$.
\end{proof}

\begin{lemma}\label{lem alpha xy}
$\widetilde{\alpha}_{n-1}(y_{n_1})\trecd\widetilde{\alpha_i}(y_i)\alpha_i(x)=h_{n-1}(x\oplus y_{n-1})\trecd h_i(x\oplus y_i)\widetilde{\alpha}_{n-1}(y_{n-2})\trecd\widetilde{\alpha}_{i+1}(y_i)$
\end{lemma}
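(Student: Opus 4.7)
The plan is to establish a single-variable commutation identity
\[
\widetilde{\alpha}_i(y)\,\alpha_i(x) \;=\; \alpha_{i+1}(x)\, h_i(x\oplus y)\, \widetilde{\alpha}_{i+1}(y) \qquad (\ast)
\]
and then derive the lemma by iterating $(\ast)$ together with the inductive hypothesis. The virtue of $(\ast)$ is that it consumes one $h_i(\cdot)$ factor from each of $\widetilde{\alpha}_i$ and $\alpha_i$, converts the pair into a single $h_i(x\oplus y)$, and leaves behind two ``shorter'' chains of the same shape, which is exactly what is needed to run a clean induction.

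I would prove $(\ast)$ by induction on $n-i$. The base case $i=n-1$ is just $h_{n-1}(y)h_{n-1}(x)=h_{n-1}(x\oplus y)$, which is (\ref{eq h1})(ii). For the step I would factor $\widetilde{\alpha}_i(y)\alpha_i(x) = h_i(y)\,\widetilde{\alpha}_{i+1}(y)\alpha_{i+1}(x)\,h_i(x)$ and apply the inductive hypothesis to the middle block, obtaining $h_i(y)\,\alpha_{i+2}(x)\,h_{i+1}(x\oplus y)\,\widetilde{\alpha}_{i+2}(y)\,h_i(x)$. By (\ref{eq h1})(i) the factor $h_i(y)$ commutes past every factor in $\alpha_{i+2}(x)$ (all intervening indices are $\geq i+2$), and similarly $h_i(x)$ commutes past $\widetilde{\alpha}_{i+2}(y)$. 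What remains in the middle is the triple $h_i(y)\,h_{i+1}(x\oplus y)\,h_i(x)$, to which the braid-type identity (\ref{eq h2}) applies, rewriting it as $h_{i+1}(x)\,h_i(x\oplus y)\,h_{i+1}(y)$. Regrouping $\alpha_{i+2}(x)h_{i+1}(x) = \alpha_{i+1}(x)$ and $h_{i+1}(y)\widetilde{\alpha}_{i+2}(y) = \widetilde{\alpha}_{i+1}(y)$ yields $(\ast)$.

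With $(\ast)$ in hand, the lemma itself follows by a second induction on $n-i$, the base case being once again (\ref{eq h1})(ii). In the inductive step, applying $(\ast)$ to the rightmost pair $\widetilde{\alpha}_i(y_i)\alpha_i(x)$ produces
\[
\widetilde{\alpha}_{n-1}(y_{n-1})\trecd\widetilde{\alpha}_{i+1}(y_{i+1})\,\alpha_{i+1}(x)\,h_i(x\oplus y_i)\,\widetilde{\alpha}_{i+1}(y_i).
\]
The inductive hypothesis of the lemma (with $i$ replaced by $i+1$) rewrites the initial segment $\widetilde{\alpha}_{n-1}(y_{n-1})\trecd\widetilde{\alpha}_{i+1}(y_{i+1})\alpha_{i+1}(x)$ as $h_{n-1}(x\oplus y_{n-1})\trecd h_{i+1}(x\oplus y_{i+1})\widetilde{\alpha}_{n-1}(y_{n-2})\trecd\widetilde{\alpha}_{i+2}(y_{i+1})$. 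Since $h_i(x\oplus y_i)$ commutes with every $\widetilde{\alpha}_j$ for $j\geq i+2$ by (\ref{eq h1})(i), one slides it leftward across these factors to produce exactly the right-hand side claimed in the lemma.

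The main obstacle is purely bookkeeping: keeping the shifted $y$-arguments straight so that the trailing product emerges as $\widetilde{\alpha}_{n-1}(y_{n-2})\trecd\widetilde{\alpha}_{i+1}(y_i)$ rather than some other re-indexing. All of the genuine algebraic content is concentrated in the Yang--Baxter--type identity (\ref{eq h2}) and in the pairwise commutations of (\ref{eq h1}); neither the first lemma of the appendix nor the Hecke relations (\ref{eq relations}) are needed beyond these consequences.
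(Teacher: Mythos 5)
Your argument is correct: both inductions go through, and your single-variable identity $(\ast)$ is precisely the Yang--Baxter-type commutation that drives the proof of the analogous nilCoxeter statement in Fomin--Stanley's Lemma 4.2, which is all the paper itself offers by way of proof. So this is essentially the same approach, with the details the paper omits fully supplied.
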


\begin{proof}
Again the proof does not differ from the case of Schubert polynomials, see \cite[Lemma 4.2]{SchubertFomin}
\end{proof}

A repeated use of lemma $\ref{lem alpha xy}$ yields the following

\begin{proposition}\label{prop alternative}
$$H(\textbf{x},\textbf{y})=\prod_{i=1}^{n-1}\prod_{j={n-i}}^1h_{i+j-1}(x_i\oplus y_j)\ ,$$
where the factors are multiplied from left to right. 
\end{proposition}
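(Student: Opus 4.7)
The plan is to expand the definition and then repeatedly apply lemma \ref{lem alpha xy} to commute each $\alpha_i(x_i)$ through the $\widetilde{\alpha}$-block on its left, producing at each step a row of $h$-factors in the combined variable $x_i \oplus y_j$. First I would rewrite
$$H(\mathbf{x},\mathbf{y})=\widetilde{\alpha}_{n-1}(y_{n-1})\cdots \widetilde{\alpha}_1(y_1)\cdot \alpha_1(x_1)\,\alpha_2(x_2)\cdots\alpha_{n-1}(x_{n-1}),$$
and focus on the leftmost $x$-factor $\alpha_1(x_1)$. Lemma \ref{lem alpha xy} with $i=1$ converts the prefix $\widetilde{\alpha}_{n-1}(y_{n-1})\cdots\widetilde{\alpha}_1(y_1)\alpha_1(x_1)$ into
$$h_{n-1}(x_1\oplus y_{n-1})\,h_{n-2}(x_1\oplus y_{n-2})\cdots h_1(x_1\oplus y_1)\cdot [\text{residual $\widetilde{\alpha}$-block in the $y$'s}].$$
This accounts for the $i=1$ row of the double product in the statement (namely the factors $h_{j}(x_1\oplus y_j)$ for $j=1,\dots,n-1$, which is exactly the inner product indexed by $j=n-1,\dots,1$ with the outer index $i=1$).

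Next I would verify that the residual block has precisely the form needed to iterate. After the first application, the residual $\widetilde{\alpha}$-factors, together with the remaining $\alpha_2(x_2)\cdots\alpha_{n-1}(x_{n-1})$, reproduce the same shape of expression but one size smaller and shifted one step up in both the $h$-subscript and the outer index. Pulling $\alpha_2(x_2)$ to the left via lemma \ref{lem alpha xy} (now with $i=2$) yields the $i=2$ row $h_n(x_2\oplus y_{n-2})\cdots h_2(x_2\oplus y_1)$… more carefully, the row $h_{i+j-1}(x_i\oplus y_j)$ with $j=n-i,\dots,1$. Continuing inductively on $i$, after the $(n-1)$-st application every $\widetilde{\alpha}$-factor has been absorbed and we are left precisely with the ordered product
$$\prod_{i=1}^{n-1}\prod_{j=n-i}^{1}h_{i+j-1}(x_i\oplus y_j).$$

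The main obstacle will be bookkeeping: checking that each application of lemma \ref{lem alpha xy} leaves behind a residual $\widetilde{\alpha}$-tail whose indexing and $y$-variables match the hypothesis required for the next application, so that the induction closes cleanly. This amounts to an index-chasing verification using only the fact that $h_i$ and $h_j$ commute for $|i-j|\ge 2$ (equality \ref{eq h1}(i)), together with the commutation $\alpha_i(x)\widetilde{\alpha}_i(y)=\widetilde{\alpha}_i(y)\alpha_i(x)$ already established; once the indexing is set up correctly, the argument is formally identical to the Schubert-polynomial case of \cite[Theorem 4.1]{SchubertFomin}, with $+$ replaced throughout by $\oplus$.
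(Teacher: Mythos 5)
Your proposal is correct and is precisely the argument the paper intends: the paper's entire justification is the sentence ``a repeated use of Lemma \ref{lem alpha xy} yields the following,'' and your iteration (apply the lemma to $\widetilde{\alpha}_{n-1}\cdots\widetilde{\alpha}_i(\cdot)\alpha_i(x_i)$ for $i=1,\dots,n-1$, peeling off one row of $h$-factors at a time while the residual $\widetilde{\alpha}$-tail is exactly the shape needed for the next step) is that repeated use spelled out. The only slips are cosmetic --- the stray $h_n$ in the $i=2$ row, which you immediately correct, and the fact that no extra commutations are actually needed since each $\alpha_i(x_i)$ is already adjacent to the residual tail after the previous step.
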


We are now able to prove the equivalence of the two definitions. Let us recall that $\phi_i$ represents the $\beta$-divided difference operators on $\ZZ[\beta][\textbf{x},\textbf{y}]$. It is easy to see that they can also be viewed as operators on $\mathcal{A}_n^{(\beta)}$ since the coefficients of each element belong to $\ZZ[\beta][\textbf{x},\textbf{y}]$. The next proposition clarifies the effect of these operators  on  $H(\textbf{x})$ and  $H(\textbf{x},\textbf{y})$ and essentially shows that the polynomials $H_\omega$ satisfy the same recursive relation of $\mathfrak{H}_\omega^{(\beta)}$.

\begin{proposition}\label{prop operator}
$1)\ \phi_i H(\textbf{x})=H(\textbf{x})u_i-\beta H(\textbf{x})\quad ;\quad 2)\ \phi_i H(\textbf{x},\textbf{y})=H(\textbf{x},\textbf{y})\cdot u_i -\beta H(\textbf{x},\textbf{y})\ .$
\end{proposition}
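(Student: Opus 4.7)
The plan is to prove Part (1) by reducing it to a purely Hecke-algebraic identity, and then to obtain Part (2) almost for free from Part (1) via the factorisation $H(\textbf{x},\textbf{y}) = \widetilde{H}(\textbf{y}) H(\textbf{x})$.

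For the reduction step of Part (1), I would decompose $H(\textbf{x}) = L\cdot M\cdot R$, with $L = \alpha_1(x_1)\cdots\alpha_{i-1}(x_{i-1})$, $M = \alpha_i(x_i)\alpha_{i+1}(x_{i+1})$, and $R = \alpha_{i+2}(x_{i+2})\cdots\alpha_{n-1}(x_{n-1})$. The Hecke-algebra coefficients of $L$ and $R$ lie in polynomial rings free of $x_i, x_{i+1}$. A direct computation from the definition of $\phi_i$ yields the twisted Leibniz rule $\phi_i(fg) = \phi_i(f)g + \sigma_i(f)\phi_i(g) + \beta\sigma_i(f)g$, and hence whenever $\sigma_i(f)=f$ one has $\phi_i(fg) = f\phi_i(g)$. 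Applying this to $L$ and $R$ gives $\phi_i H(\textbf{x}) = L\cdot \phi_i(M)\cdot R$. Since $R$ is composed only of the generators $u_k$ with $k\geq i+2$, all of which commute with $u_i$, one also has $H(\textbf{x})\,u_i = L\cdot M u_i\cdot R$. The identity of Part (1) therefore reduces to the single claim $\phi_i(M) = M(u_i-\beta)$, which, after clearing denominators in the definition of $\phi_i$, is equivalent to
\[
(1+\beta x_i)\bigl[\alpha_i(x_i)\alpha_{i+1}(x_{i+1}) - \alpha_i(x_{i+1})\alpha_{i+1}(x_i)\bigr] = (x_i - x_{i+1})\,\alpha_i(x_i)\alpha_{i+1}(x_{i+1})\,u_i.
\]

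The core of the proof is the verification of this reduced identity directly in $\mathcal{A}_n^{(\beta)}$. I would attack it using the factorisation $\alpha_i(x) = \alpha_{i+1}(x)h_i(x)$, the commutation of $h_i(x)$ with $h_k(y)$ for $|i-k|\geq 2$, the braid-type identity $h_i(x)h_{i+1}(x\oplus y)h_i(y) = h_{i+1}(y)h_i(x\oplus y)h_{i+1}(x)$ of (\ref{eq h2}), and the basic simplification $h_i(x)u_i = (1+\beta x)u_i$. If a straightforward expansion becomes unwieldy, I would switch to induction on $n-i$: the base case $n = i+2$ involves only the three factors $h_{i+1}(a), h_i(a), h_{i+1}(b)$ (and their $\sigma_i$-images) and reduces to a short algebraic check, while the inductive step attaches an extra factor $h_{n-1}$ to each of $\alpha_i$ and $\alpha_{i+1}$, whose effect can be absorbed via commutation.

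Finally, for Part (2), the coefficients of $\widetilde{H}(\textbf{y})$ in the Hecke-algebra basis lie in $\mathbb{Z}[\beta][\textbf{y}]$ and are $\sigma_i$-invariant, so the twisted Leibniz rule again yields $\phi_i[\widetilde{H}(\textbf{y})\cdot H(\textbf{x})] = \widetilde{H}(\textbf{y}) \cdot \phi_i H(\textbf{x})$. Combining this with Part (1) and left-multiplying by $\widetilde{H}(\textbf{y})$ produces $H(\textbf{x},\textbf{y})u_i - \beta H(\textbf{x},\textbf{y})$, as required. The main obstacle throughout is the Hecke-algebraic verification of the reduced identity in Part (1): although compact to state, it requires careful bookkeeping of the noncommuting generators $u_i$ and $u_{i+1}$, with the braid identity of (\ref{eq h2}) doing most of the work.
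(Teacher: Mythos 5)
Your Part (2) argument coincides with the paper's: both use the observation that the coefficients of $\widetilde{H}(\textbf{y})$ lie in $\ZZ[\beta][\textbf{y}]$ and are therefore $\sigma_i$-invariant, so $\phi_i$ passes through left-multiplication by $\widetilde{H}(\textbf{y})$, and the claim reduces to Part (1). For Part (1), however, the paper simply cites the proof of \cite[Theorem~2.3]{GrothendieckFomin}, whereas you attempt a self-contained argument, so the comparison is really between your reduction and Fomin--Kirillov's computation.

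Your reduction of Part (1) is correct and is a genuine contribution. The twisted Leibniz rule
\[
\phi_i(fg)=\phi_i(f)g+\sigma_i(f)\phi_i(g)+\beta\,\sigma_i(f)g
\]
is a straightforward consequence of the definition (\ref{def H1}); it implies $\phi_i(fg)=f\phi_i(g)$ whenever $f$ is $\sigma_i$-invariant, and the coefficients of $L=\alpha_1(x_1)\cdots\alpha_{i-1}(x_{i-1})$ and of $R=\alpha_{i+2}(x_{i+2})\cdots\alpha_{n-1}(x_{n-1})$ involve only variables other than $x_i,x_{i+1}$. Combined with $Ru_i=u_iR$ (as $R$ is built from $u_k$ with $k\geq i+2$), your reduction of the statement to
\[
(1+\beta x_i)\bigl[\alpha_i(x_i)\alpha_{i+1}(x_{i+1})-\alpha_i(x_{i+1})\alpha_{i+1}(x_i)\bigr]=(x_i-x_{i+1})\,\alpha_i(x_i)\alpha_{i+1}(x_{i+1})\,u_i
\]
is a correct algebraic rephrasing of $\phi_i(M)=M(u_i-\beta)$. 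One minor point you should flag: for $i=n-1$ the factor $\alpha_{i+1}(x_{i+1})=\alpha_n(x_n)$ must be read as the empty product, and $x_n$ as a dummy variable absent from $H(\textbf{x})$; the reduced identity is still the right one there.

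The gap is the verification of the reduced identity itself, which you only sketch. I checked the case $n=i+2$ directly: with $u=u_i,v=u_{i+1}$, $a=x_i$, $b=x_{i+1}$, expanding $M=(1+av)(1+au)(1+bv)$ and using only $v^2=\beta v$ and $vuv=uvu$ gives $M-\sigma_i M=(a-b)\bigl[u+(a+b)vu+ab\,uvu\bigr]$ and $Mu=(1+\beta a)\bigl[u+(a+b)vu+ab\,uvu\bigr]$, so the identity holds; notably, the twisted braid identity (\ref{eq h2}) involving $x\oplus y$ was not needed, only the bare relations (\ref{eq relations}). This casts some doubt on the claim that (\ref{eq h2}) "does most of the work." Your proposed induction on $n-i$ is also not obviously well-set-up, because $h_{n-1}(x_i)$ does not commute with the next factor $h_{n-2}(x_i)$ inside $\alpha_i(x_i)$, so peeling off the top factor is not a simple commutation. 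As written, the proof of Part (1) is therefore a correct reduction followed by a plausible but unfinished calculation; to close it you would need to carry out the Hecke-algebra computation for general $n-i$ (for instance, by establishing the cleaner factorisations $M-\sigma_iM=(x_i-x_{i+1})\,N$ and $Mu_i=(1+\beta x_i)\,N$ for a common $N$, as happens in the base case) rather than appeal to (\ref{eq h2}) or an induction whose step is left open.
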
 

\begin{proof}
For the first identity see \cite[within the proof of theorem 2.3]{GrothendieckFomin}. For the second it suffices to recall the definition of $H(\textbf{x},\textbf{y})$, to observe that the operator is linear with respect to polynomials symmetric in $x_i$ and $x_{i+1}$ and use the first part. 
\end{proof}

\begin{corollary} 
$H(\textbf{x},\textbf{y})=\sum_{\omega\in S_n}\phi_\omega 
H_{\omega_0}(\textbf{x},\textbf{y}) \cdot\omega_0 \omega$ . 
\end{corollary}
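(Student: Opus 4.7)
The plan is to identify the coefficients $H_\omega(\textbf{x},\textbf{y})$ in the expansion $H(\textbf{x},\textbf{y})=\sum_{\omega\in S_n}H_\omega\cdot\omega$ with the double $\beta$-polynomials $\mathfrak{H}_\omega^{(\beta)}$ given by the recursive definition (\ref{def H1})-(\ref{def H3}), and then to reindex the sum via the involution $\omega\mapsto\omega_0\omega$.

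First, I would verify the base case $H_{\omega_0}=\mathfrak{H}_{\omega_0}^{(\beta)}$. Proposition \ref{prop alternative} writes $H(\textbf{x},\textbf{y})$ as a product of factors of the form $h_k(z)=1+z\cdot u_k$, so the coefficient of the longest basis element $\omega_0$ is obtained by selecting the $u$-term from each of the $\binom{n}{2}$ factors. Reading off the indices in the prescribed order produces
$$
(u_{n-1}u_{n-2}\cdots u_1)(u_{n-1}u_{n-2}\cdots u_2)\cdots(u_{n-1}),
$$
which is a standard reduced decomposition of $\omega_0$. Hence $H_{\omega_0}=\prod_{i+j\le n}(x_i\oplus y_j)$, matching the initial condition (\ref{def H2}).

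Second, I would extract the defining recursion of the $\mathfrak{H}_\omega^{(\beta)}$'s from Proposition \ref{prop operator}(2). The left-hand side of $\phi_i H(\textbf{x},\textbf{y})=H(\textbf{x},\textbf{y})\cdot u_i-\beta H(\textbf{x},\textbf{y})$ expands to $\sum_\sigma\phi_i(H_\sigma)\cdot\sigma$ since $\phi_i$ acts only on the polynomial coefficients. On the right, the Hecke relations (\ref{eq relations}) give $\omega\cdot u_i=\omega s_i$ when $\ell(\omega s_i)>\ell(\omega)$ and $\omega\cdot u_i=\beta\omega$ otherwise, so the ``descent'' terms cancel against $-\beta H$ and the right-hand side collapses to
$$
\sum_{\omega:\,\ell(\omega s_i)>\ell(\omega)}H_\omega(\omega s_i-\beta\omega).
$$
After the substitution $\sigma=\omega s_i$ in the first piece, a coefficient-by-coefficient comparison yields $H_\omega=\phi_iH_{\omega s_i}$ whenever $\ell(\omega)<\ell(\omega s_i)$, which is precisely the recursion (\ref{def H3}). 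Combined with the base case, induction on $\ell(\omega_0)-\ell(\omega)$ then forces $H_\omega=\mathfrak{H}_\omega^{(\beta)}$ for every $\omega\in S_n$.

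Finally, I would change the summation index to $\omega'=\omega_0\omega$ (using $\omega_0^2=1$) and iterate the recursion along any reduced decomposition of $\omega_0\omega'$ to obtain $\mathfrak{H}_{\omega'}^{(\beta)}=\phi_{\omega_0\omega'}H_{\omega_0}$, which, rewritten via $\omega=\omega_0\omega'$, gives the formula in the statement. The main obstacle in this plan is the bookkeeping in the second step: one has to correctly split the sum $\sum_\omega H_\omega\cdot\omega\cdot u_i$ along ascents and descents of $s_i$, reindex the ascent part, and track the cancellations with $-\beta H$ so that what survives is exactly the recursion (\ref{def H3}) defining the $\mathfrak{H}_\omega^{(\beta)}$.
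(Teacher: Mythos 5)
Your proposal is correct and follows essentially the same route as the paper: both extract the recursion $H_{\omega}=\phi_i H_{\omega s_i}$ (for $\ell(\omega)<\ell(\omega s_i)$) from Proposition~\ref{prop operator}(2) by comparing coefficients of basis elements, using that $\omega\cdot u_i$ is $\omega s_i$ or $\beta\omega$ according to ascent or descent, and then induct along reduced decompositions. The only difference is that you additionally establish $H_{\omega_0}=\mathfrak{H}_{\omega_0}$ and the full identification $H_\omega=\mathfrak{H}_\omega$, which the paper defers to the proposition following this corollary and which is not needed for the corollary itself.
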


\begin{proof}
We prove by induction on $l=l(\omega)$ that $\phi_\omega 
H_{\omega_0}(\textbf{x},\textbf{y})=H_{\omega_0\omega}(\textbf{x},\textbf{y})$. For $l=0$ the statement is trivial. On the other hand, for any $\omega\neq id$ there exists $i$ such that $\omega=\omega's_i$ with $l(\omega)=l(\omega')+1$. One then applies part 2 of proposition  \ref{prop operator} for such $i$ and considers the coefficient of $\omega_0\omega'$ on both sides of the equation. This yields
$$\phi_iH_{\omega_0\omega'}(\textbf{x},\textbf{y})=
(H_{\omega_0\omega's_i}(\textbf{x},\textbf{y})+\beta H_{\omega_0\omega'}(\textbf{x},\textbf{y}))-\beta H_{\omega_0\omega'}(\textbf{x},\textbf{y})=H_{\omega_0\omega}(\textbf{x},\textbf{y})\ ,$$
where $H_\omega(\textbf{x},\textbf{y})+\beta H_{\omega'}(\textbf{x},\textbf{y})$ represents the coefficient for $H(\textbf{x},\textbf{y})\cdot u_i\,$. The statement then follows since  by inductive hypothesis $\phi_{\omega'}H_{\omega_0}(\textbf{x},\textbf{y})=
H_{\omega_0\omega'}(\textbf{x},\textbf{y})$.
\end{proof}
From proposition \ref{prop alternative} one can instantly check that $H_{\omega_0}(\textbf{x},\textbf{y})=\mathfrak{H}_{\omega_0}(\textbf{x},\textbf{y})$ and therefore it follows that the two definitions are equivalent:

\begin{proposition}
For every $\omega\in S_n$ one has $H_{\omega}(\textbf{x},\textbf{y})=
\mathfrak{H}_{\omega}(\textbf{x},\textbf{y})\ .$

\end{proposition}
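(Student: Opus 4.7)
The plan is to show that both $H_\omega$ and $\mathfrak{H}_\omega$ satisfy the same recursion starting from the same initial datum, namely the top element $\omega_0$, and then invoke the preceding corollary to conclude equality for every $\omega$.

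First I would verify the base case $H_{\omega_0}=\mathfrak{H}_{\omega_0}$. By proposition \ref{prop alternative}, $H(\textbf{x},\textbf{y})$ is a product of $\binom{n}{2}$ factors of the form $h_{i+j-1}(x_i\oplus y_j)=1+(x_i\oplus y_j)u_{i+j-1}$, one for each pair $(i,j)$ with $i+j\leq n$. Since the length of $\omega_0$ equals $\binom{n}{2}$, the coefficient of $\omega_0$ in the expansion is obtained by choosing the second summand in every single factor; the scalar contribution is $\prod_{i+j\leq n}(x_i\oplus y_j)$, which matches the definition of $\mathfrak{H}_{\omega_0}^{(\beta)}$ given in (\ref{def H2}). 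The only thing to check is that the associated product of generators $u_{i+j-1}$, read in the prescribed left-to-right order, does in fact represent $\omega_0$ (and in a reduced way, so that no term from a shorter permutation leaks in). Writing this product out, the inner product for fixed $i$ is $u_{n-1}u_{n-2}\cdots u_i$, and the full product becomes $(u_{n-1}\cdots u_1)(u_{n-1}\cdots u_2)\cdots(u_{n-1})$, which is a classical reduced word for $\omega_0\in S_n$. Hence $H_{\omega_0}(\textbf{x},\textbf{y})=\mathfrak{H}_{\omega_0}(\textbf{x},\textbf{y})$.

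Next I would compare the recursions. By the corollary immediately preceding the proposition, $H_\omega=\phi_{\omega_0\omega}H_{\omega_0}$, where $\phi_{\omega_0\omega}$ denotes the composition $\phi_{i_l}\cdots\phi_{i_1}$ associated to any reduced expression $s_{i_1}\cdots s_{i_l}$ of $\omega_0\omega$; this is legitimate because, as recalled in remark \ref{rem braid}, the operators $\phi_i$ satisfy the braid relations and are therefore indexed by permutations. On the other hand, iterating the recursive rule (\ref{def H3}) along the same reduced word shows, by a straightforward induction on $l(\omega_0\omega)$, that $\mathfrak{H}_\omega=\phi_{\omega_0\omega}\mathfrak{H}_{\omega_0}$; here one uses that at each step one passes from $\mathfrak{H}_{\omega'}$ to $\mathfrak{H}_{\omega' s_i}$ with $l(\omega' s_i)=l(\omega')-1$, so that the required length condition holds throughout.

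Combining the two ingredients gives
$$H_\omega=\phi_{\omega_0\omega}H_{\omega_0}=\phi_{\omega_0\omega}\mathfrak{H}_{\omega_0}=\mathfrak{H}_\omega,$$
as desired. The only genuinely computational point is the identification of the product of $u_k$'s in step one with a reduced word for $\omega_0$; everything else is bookkeeping with the recursive definitions and the braid-invariance of $\phi_i$. I would expect this identification, together with handling the inductive step in the ordering of the $\phi_i$'s (reading the reduced word left-to-right versus right-to-left), to be the main place one has to be careful, though neither presents a conceptual difficulty.
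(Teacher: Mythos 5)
Your argument is correct and follows the paper's own route: both proofs combine the corollary $H_{\omega_0\omega}=\phi_\omega H_{\omega_0}$ with the base-case verification $H_{\omega_0}=\mathfrak{H}_{\omega_0}$ read off from Proposition \ref{prop alternative}. You have merely made explicit the step the paper dismisses as ``instantly check,'' and the detail you flag as the genuine computational point---that only the full choice of $u$-terms can contribute to the coefficient of $\omega_0$, since any shorter product of generators reduces in $\mathcal{A}_n^{(\beta)}$ to a scalar times $u_v$ with $l(v)$ at most the number of factors, which is strictly less than $l(\omega_0)$ unless all $\binom{n}{2}$ factors are taken---is exactly the right thing to verify.
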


Now that we have at hand this alternative definition, it is possible to prove the analogue of some properties of double Schubert polynomials. 

\begin{lemma}\label{lem beta pol}
$ \ \mathfrak{H}_{\omega^{-1}}(\textbf{x},\textbf{y})=\mathfrak{H}_{\omega}(\textbf{y},\textbf{x})\ .$
\end{lemma}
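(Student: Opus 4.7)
The plan is to introduce an anti-automorphism $\sigma$ of the Hecke algebra $\mathcal{A}_n^{(\beta)}$ and exploit the fact that under the alternative definition $H_\omega = \mathfrak{H}_\omega$ established just above, the statement becomes a transparent manipulation of products of $h_i$-factors.

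First I would define $\sigma:\mathcal{A}_n^{(\beta)}\to\mathcal{A}_n^{(\beta)}$ to be the $\ZZ[\beta][\textbf{x},\textbf{y}]$-linear anti-automorphism fixing each generator $u_i$, and check well-definedness by verifying that all three sets of relations in (\ref{eq relations}) are invariant under reversal: commutation and the quadratic relation $u_i^2=\beta u_i$ are manifestly symmetric, and the braid relation $u_iu_{i+1}u_i=u_{i+1}u_iu_{i+1}$ has reversal equal to itself. Next I would observe that on the basis $\{\omega\}_{\omega\in S_n}\subset \mathcal{A}_n^{(\beta)}$ (which is well-defined thanks to the braid relations), $\sigma$ acts as $\omega\mapsto\omega^{-1}$: reversing a reduced decomposition $u_{i_1}\cdots u_{i_l}$ of $\omega$ yields $u_{i_l}\cdots u_{i_1}$, a reduced decomposition of $\omega^{-1}$. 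Consequently
\[
\sigma\bigl(H(\textbf{x},\textbf{y})\bigr)=\sum_{\omega\in S_n}\mathfrak{H}_{\omega}(\textbf{x},\textbf{y})\cdot \omega^{-1}=\sum_{\omega\in S_n}\mathfrak{H}_{\omega^{-1}}(\textbf{x},\textbf{y})\cdot\omega.
\]

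The key computation is then to show $\sigma(H(\textbf{x},\textbf{y}))=H(\textbf{y},\textbf{x})$. Since $h_i(z)=1+zu_i$ is a degree-one polynomial in $u_i$, it is fixed by $\sigma$. Reversing the factorization $\alpha_i(z)=h_{n-1}(z)\cdots h_i(z)$ yields $\widetilde{\alpha}_i(z)=h_i(z)\cdots h_{n-1}(z)$, so $\sigma(\alpha_i(z))=\widetilde{\alpha}_i(z)$ and symmetrically $\sigma(\widetilde{\alpha}_i(z))=\alpha_i(z)$. Reading off from the definitions of $H(\textbf{x})$ and $\widetilde{H}(\textbf{y})$, this gives $\sigma(H(\textbf{x}))=\widetilde{H}(\textbf{x})$ and $\sigma(\widetilde{H}(\textbf{y}))=H(\textbf{y})$. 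Applying $\sigma$ to $H(\textbf{x},\textbf{y})=\widetilde{H}(\textbf{y})H(\textbf{x})$ and using that $\sigma$ reverses products yields
\[
\sigma(H(\textbf{x},\textbf{y}))=\sigma(H(\textbf{x}))\sigma(\widetilde{H}(\textbf{y}))=\widetilde{H}(\textbf{x})H(\textbf{y})=H(\textbf{y},\textbf{x}),
\]
where the last equality is the definition of $H(\cdot,\cdot)$ with the two variable sets swapped.

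Combining this with the coefficient computation above and using the identification $H_\omega=\mathfrak{H}_\omega$ proved earlier in the appendix, equating coefficients of $\omega$ in the two expansions of $\sigma(H(\textbf{x},\textbf{y}))=H(\textbf{y},\textbf{x})$ yields $\mathfrak{H}_{\omega^{-1}}(\textbf{x},\textbf{y})=\mathfrak{H}_{\omega}(\textbf{y},\textbf{x})$. The main subtlety to be careful about is ensuring that $\sigma$ descends to a well-defined map on $\mathcal{A}_n^{(\beta)}$ rather than on the free algebra, and that the assignment $\omega\mapsto u_{i_1}\cdots u_{i_l}$ depends only on $\omega$; both issues are handled uniformly by observing that the defining relations are invariant under reversal of words.
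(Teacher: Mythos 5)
Your proof is correct and is essentially the paper's argument: both rest on the fact that reversing the order of a word in the $u_i$ is compatible with the defining relations of $\mathcal{A}_n^{(\beta)}$, corresponds to inversion in $S_n$, and interchanges $\widetilde{H}(\textbf{y})H(\textbf{x})$ with $\widetilde{H}(\textbf{x})H(\textbf{y})=H(\textbf{y},\textbf{x})$. Packaging the reversal as an anti-automorphism $\sigma$ fixing the generators is simply a cleaner formalization of what the paper does by ``rearranging the factors in reverse order,'' and it makes the coefficient comparison transparent.
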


\begin{proof}
One only needs to observe that by making use of the first group of relations in (\ref{eq relations}), it is possible to rearrange the factors of  $H(\textbf{x},\textbf{y})=\widetilde{H}(\textbf{x})H(\textbf{y})$ so that they appear precisely in the reverse order of the ones of $\widetilde{H}(\textbf{y})H(\textbf{x})$.  The result then follows because reversing the order of a product of elementary transposition amounts to taking its inverse. 
\end{proof}

\begin{lemma}     \label{lem symmetry}
If $\omega(i)<\omega(i+1)$, then $\mathfrak{H}_{\omega}(\textbf{x},\textbf{y})$ is symmetrical in $x_i$ and $x_{i+1}$. Also, if $\omega^{-1}(i)<\omega^{-1}(i+1)$, then $\mathfrak{H}_{\omega}(\textbf{x},\textbf{y})$ is symmetrical in $y_i$ and $y_{i+1}$.
\end{lemma}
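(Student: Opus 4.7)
The plan is to exploit the Hecke algebra identity of Proposition \ref{prop operator}, namely $\phi_i H(\textbf{x},\textbf{y}) = H(\textbf{x},\textbf{y})\cdot u_i -\beta H(\textbf{x},\textbf{y})$, and extract coefficients with respect to a fixed $\omega \in S_n$. To evaluate the coefficient of $\omega$ in $H(\textbf{x},\textbf{y})\cdot u_i = \sum_\nu \mathfrak{H}_\nu \nu u_i$, I will use the standard Hecke-type computation coming from the relations (\ref{eq relations}): if a reduced decomposition of $\nu$ does not end in $u_i$ (i.e.\ $l(\nu s_i) = l(\nu) + 1$), then $\nu u_i = \nu s_i$ as a basis element, while if $\nu$ admits a reduced decomposition ending in $u_i$ (i.e.\ $l(\nu s_i) = l(\nu) - 1$), then the relation $u_i^2 = \beta u_i$ gives $\nu u_i = \beta \nu$.

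First I would treat the hypothesis $\omega(i) < \omega(i+1)$, which is equivalent to $l(\omega s_i) = l(\omega)+1$. Under this assumption, neither of the two contributions above can produce $\omega$: the first would force $\nu = \omega s_i$ with $l(\omega s_i) = l(\omega)-1$, and the second would force $l(\omega s_i) = l(\omega)-1$, both contradicting our hypothesis. Hence the coefficient of $\omega$ in $H\cdot u_i$ vanishes, and comparing coefficients of $\omega$ in Proposition \ref{prop operator} yields the identity $\phi_i \mathfrak{H}_\omega = -\beta\, \mathfrak{H}_\omega$.

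Then I would insert this into the explicit formula
\[
\phi_i P = \frac{(1+\beta x_{i+1})P - (1+\beta x_i)\sigma_i P}{x_i - x_{i+1}}
\]
from (\ref{def H1}) with $P = \mathfrak{H}_\omega$, and after multiplying through by $x_i - x_{i+1}$ and rearranging, the terms involving $\beta$ in the numerator cancel against $-\beta(x_i-x_{i+1})\mathfrak{H}_\omega$. This collapses to $(1+\beta x_i)\mathfrak{H}_\omega = (1+\beta x_i)\sigma_i \mathfrak{H}_\omega$, and since $1 + \beta x_i$ is not a zero-divisor in $\ZZ[\beta][\textbf{x},\textbf{y}]$ we obtain $\sigma_i \mathfrak{H}_\omega = \mathfrak{H}_\omega$, i.e.\ the desired symmetry in $x_i$ and $x_{i+1}$.

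For the second statement I would argue via Lemma \ref{lem beta pol}: if $\omega^{-1}(i) < \omega^{-1}(i+1)$, then applying the first part of the present lemma to $\omega^{-1}$ shows that $\mathfrak{H}_{\omega^{-1}}(\textbf{x},\textbf{y})$ is symmetric in $x_i$ and $x_{i+1}$. Combining this with $\mathfrak{H}_{\omega^{-1}}(\textbf{x},\textbf{y}) = \mathfrak{H}_\omega(\textbf{y},\textbf{x})$ immediately yields the symmetry of $\mathfrak{H}_\omega(\textbf{x},\textbf{y})$ in $y_i$ and $y_{i+1}$. The only subtle step I anticipate is the Hecke-algebra coefficient extraction in the first paragraph, which relies on the fact that different reduced words for $\omega$ all represent the same basis element of $\mathcal{A}_n^{(\beta)}$; once that is in hand the rest is a short algebraic manipulation.
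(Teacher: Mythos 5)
Your proof is correct, but for the first statement it takes a genuinely different route from the paper. The paper's argument is a one-line observation: the hypothesis $\omega(i)<\omega(i+1)$ means $l(\omega)<l(\omega s_i)$, so by the recursive definition (\ref{def H3}) (together with the well-definedness guaranteed by the braid relations, Remark \ref{rem braid}) one has $\mathfrak{H}_\omega=\phi_i\mathfrak{H}_{\omega s_i}$, and any element of the form $(1+\sigma_i)Q$ is automatically $\sigma_i$-invariant, so the image of $\phi_i$ consists of polynomials symmetric in $x_i,x_{i+1}$. You instead extract the coefficient of $\omega$ from the identity $\phi_i H(\textbf{x},\textbf{y})=H(\textbf{x},\textbf{y})\cdot u_i-\beta H(\textbf{x},\textbf{y})$ of Proposition \ref{prop operator}, use the Hecke multiplication rule to see that no term of $H\cdot u_i$ contributes to $\omega$ when $l(\omega s_i)=l(\omega)+1$, deduce the eigenvalue identity $\phi_i\mathfrak{H}_\omega=-\beta\,\mathfrak{H}_\omega$, and then unwind (\ref{def H1}); the cancellation $(1+\beta x_{i+1})+\beta(x_i-x_{i+1})=1+\beta x_i$ and the fact that $1+\beta x_i$ is a nonzerodivisor in $\ZZ[\beta][\textbf{x},\textbf{y}]$ give $\sigma_i\mathfrak{H}_\omega=\mathfrak{H}_\omega$. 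Your coefficient extraction is exactly the computation carried out in the corollary following Proposition \ref{prop operator} (there in the length-decreasing case), so it is sound; the price is that you need the identification $H_\omega=\mathfrak{H}_\omega$ and the Hecke-algebra basis, whereas the paper needs only the shape of $\phi_i$. What your route buys is the stronger intermediate fact $\phi_i\mathfrak{H}_\omega=-\beta\mathfrak{H}_\omega$ whenever $l(\omega s_i)>l(\omega)$, the $\beta$-analogue of $\partial_i\mathfrak{S}_\omega=0$, which is of independent interest. The second statement is handled identically in both proofs, via Lemma \ref{lem beta pol}.
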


\begin{proof}
For the first statement it suffices to observe that the hypothesis on $\omega$ ensures that $l(\omega)<l(\omega s_i)$ and that by definition $\phi_i(P)$ is symmetric in $x_i$, $x_{i+1}$ for any polynomial $P$. The second statement follows from the first by applying lemma \ref{lem beta pol}.
\end{proof}


\bibliographystyle{siam}
\bibliography{biblio}

 Department of Mathematical Sciences, KAIST, 291 Daehak-ro, Yuseong-gu, Daejeon, 305-701, South Korea
\vspace*{1\baselineskip}

\noindent \textit{E-mail address}: hudson.t@kaist.ac.kr

\end{document}